\theoremstyle{plain}
\newtheorem{thm}{Theorem}[section]
\newtheorem{lemma}[thm]{Lemma}
\newtheorem{cor}[thm]{Corollary}
\newtheorem{prop}[thm]{Proposition}
\theoremstyle{definition}
\newtheorem{defi}[thm]{Definition}
\newtheorem{rmk}[thm]{Remark}
\newtheorem{ex}[thm]{Example}
\theoremstyle{remark}
\begin{document}

\title{Classification of random circle homeomorphisms\\ up to topological conjugacy}
\author{Thai Son Doan\footnote{Institute of Mathematics, Vietnam Academy of Science and Technology, 18 Hoang Quoc Viet, Ha Noi, Vietnam}, Jeroen S.W.~Lamb\footnote{Department of Mathematics, Imperial College London, 180 Queen's Gate, London SW7 2AZ, United Kingdom}, Julian Newman\footnote{Faculty of Mathematics, University of Bielefeld, 33615 Bielefeld, Germany}, Martin Rasmussen\footnote{Department of Mathematics, Imperial College London, 180 Queen's Gate, London SW7 2AZ, United Kingdom}}

\maketitle 

\begin{abstract}

\noindent We provide a classification of random orientation-preserving homeomorphisms of $\mathbb{S}^1$, up to topological conjugacy of the random dynamical systems generated by i.i.d.\ iterates of the random homeomorphism. This classification covers all random circle homeomorphisms for which the noise space is a connected Polish space and an additional extremely weak condition is satisfied.

\end{abstract}

\section{Introduction}

In classical dynamical systems theory, a common question to ask is whether, for a given pair of self-maps $(f,g)$ of some compact metric space, there is a topological conjugacy from $f$ to $g$. In this paper, we consider the ``analogous'' question for a \emph{noisy pair of maps} $\{ \, (f_\alpha,g_\alpha) \, \}_{\alpha \in \Delta}$ where $\alpha$ is drawn randomly from some probability space $(\Delta,\mathcal{B}(\Delta),\nu)$. In our case, a random map $(f_\alpha)_{\alpha \in \Delta}$ is viewed ``dynamically'' by considering \emph{i.i.d.\ iterations}; topological conjugacy is then understood in the ``random dynamical systems'' framework, namely as a topology-preserving cohomology between the cocycles generated by the random maps $(f_\alpha)_{\alpha \in \Delta}$ and $(g_\alpha)_{\alpha \in \Delta}$ over the shift map on $(\Delta^\mathbb{Z},\mathcal{B}(\Delta)^{\otimes \mathbb{Z}},\nu^{\otimes \mathbb{Z}})$.
\\ \\
Topological conjugacy of random dynamical systems has been considered before, most notably in \cite{IL02}, which provides weak conditions under which a Wiener-driven stochastic differential equation can be transformed to a random differential equation. \cite{cdlr16} shows that a breakdown of uniform topological equivalence characterises a stochastic pitchfork bifurcation. \cite{Gundlach1995ChaosIR} describes cases in which ``chaotic'' random dynamical systems can be measurably or topologically conjugated to a random shift. For results on \emph{local} conjugacy, see \cite{Li20163191} and references therein.
\\ \\
This present paper is the first study seeking to \emph{classify} a broad class of random dynamical systems up to topological conjugacy; hence, we naturally focus on the simplest case---which already turns out to be remarkably subtle---namely random circle homeomorphisms. Under reasonable conditions, we obtain a complete classification of random circle homeomorphisms up to orientation-preserving conjugacy, and hence up to topological conjugacy.
\\ \\
The structure of the paper is as follows: In Section~2, we give the necessary preliminaries, state our main result (Theorem~\ref{MAIN}), and present some examples. In Section~3, we develop further preliminary results needed for the proof of our main result, particularly regarding the random invariant objects that are key to establishing the existence or non-existence of a topological conjugacy. In Section~4, we prove our main result.

\section{Results and examples}

\subsection{Random circle homeomorphisms}

Fix a connected Polish space $\Delta$, equipped with a Borel probability measure $\nu$ of full support.

\begin{defi}
A \emph{random circle homeomorphism} is a $\Delta$-indexed family $\mathbf{f}=(f_\alpha)_{\alpha \in \Delta}$ of orientation-preserving homeomorphisms $f_\alpha \in \mathrm{Homeo}^+(\mathbb{S}^1)$ such that
\begin{enumerate}[\indent (i)]
\item $(\alpha,x) \mapsto f_\alpha(x)$ is continuous;
\item for any non-dense bi-infinite sequence $(x_n)_{n \in \mathbb{Z}}$ in $\mathbb{S}^1$, there exists $n \in \mathbb{Z}$ and $\alpha \in \Delta$ such that $f_\alpha(x_n) \neq x_{n+1}$.
\end{enumerate}
\end{defi}

\noindent Condition~(ii) is a very weak ``non-degeneracy'' condition that guarantees that a random circle homeomorphism has the structure described in Proposition~\ref{permute}.

\begin{defi}
A \emph{symmetry} of a random circle homeomorphism $\mathbf{f}$ is an orientation-preserving homeomorphism $\tau \in \mathrm{Homeo}^+(\mathbb{S}^1) \setminus \{\mathrm{id}_{\mathbb{S}^1}\}$ such that $\tau^k=\mathrm{id}_{\mathbb{S}^1}$ for some $k \in \mathbb{N}$ and $f_\alpha$ commutes with $\tau$ for all $\alpha$. The smallest $k$ such that $\tau^k=\mathrm{id}_{\mathbb{S}^1}$ is called the \emph{order} of $\tau$.
\end{defi}

\begin{defi} \label{minimal}
Given a random circle homeomorphism $\mathbf{f}$, a closed set $G \subset \mathbb{S}^1$ is said to be \emph{$\mathbf{f}$-invariant} if $f_\alpha(G) \subset G$ for every $\alpha \in \Delta$. An \emph{$\mathbf{f}$-minimal set} is a non-empty $\mathbf{f}$-invariant closed set $G \subset \mathbb{S}^1$ containing no non-empty $\mathbf{f}$-invariant closed proper subset.
\end{defi}

\noindent Note that any two distinct $\mathbf{f}$-minimal sets are mutually disjoint. It is well-known that every $\mathbf{f}$-invariant closed set contains at least one $\mathbf{f}$-minimal set.

\begin{prop} \label{permute}
A random circle homeomorphism $\mathbf{f}$ admits finitely many minimal sets. Letting $G$ be the union of all the $\mathbf{f}$-minimal sets, the number $k_\mathbf{f}$ of connected components of $G$ is finite, and there exists $l_\mathbf{f} \in \{0,\ldots,k_\mathbf{f}-1\}$ such that for every $\alpha \in \Delta$, for each $i \in \{0,\ldots,k_\mathbf{f}-1\}$,
\[ f_\alpha(G_i) \ \subset \ G_{i+l_\mathbf{f} \; \mathrm{mod} \; k_\mathbf{f}} \]
where $G_0,\ldots,G_{k_\mathbf{f}-1}$ are the connected components of $G$ going anticlockwise round the circle.
\end{prop}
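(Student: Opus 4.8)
The plan is to pin down the shape of each $\mathbf{f}$-minimal set, bound how many there are, and then read off the cyclic action from the fact that each $f_\alpha$ preserves orientation. First, no non-empty finite set $F$ can be $\mathbf{f}$-invariant: each $f_\alpha$ would restrict to an orientation-preserving permutation of $F$, i.e.\ a cyclic shift $e_i\mapsto e_{i+l_\alpha}$ of the points $e_0,\dots,e_{m-1}$ of $F$ listed cyclically; since $\alpha\mapsto f_\alpha(e_0)$ is continuous with values in the finite set $F$ and $\Delta$ is connected, $l_\alpha$ is constant, and then $(e_{nl})_{n\in\mathbb{Z}}$ is a non-dense bi-infinite sequence with $f_\alpha(e_{nl})=e_{(n+1)l}$ for all $\alpha$, contradicting (ii). Hence every $\mathbf{f}$-minimal set $M$ is infinite; and since the non-isolated points of $M$ form a non-empty closed $\mathbf{f}$-invariant subset (homeomorphisms send non-isolated points to non-isolated points), minimality forces $M$ to be perfect.

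The engine of the argument is the observation that all the $f_\alpha$ must agree on any closed $\mathbf{f}$-invariant \emph{totally disconnected} set $K$: for $p\in K$, the continuous map $\alpha\mapsto f_\alpha(p)$ takes values in $K$, so connectedness of $\Delta$ and total disconnectedness of $K$ make it constant. Writing $\phi$ for the common restriction $f_\alpha|_K$, a continuous self-map of $K$ independent of $\alpha$, if $K$ is moreover $\mathbf{f}$-minimal then every forward $\phi$-orbit is dense in $K$, which forces $\phi(K)=K$; being injective as well, $\phi$ is a homeomorphism of $K$, and then for any $x_0\in K$ the bi-infinite orbit $(\phi^n x_0)_{n\in\mathbb{Z}}\subseteq K$ is a non-dense bi-infinite sequence with $f_\alpha(\phi^n x_0)=\phi^{n+1}x_0$ for every $\alpha$ and every $n$ — contradicting (ii). So no $\mathbf{f}$-minimal set is
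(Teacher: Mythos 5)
You correctly rule out finite invariant sets (same idea as the paper), and the observation that a closed $\mathbf{f}$-invariant \emph{totally disconnected} set forces all $f_\alpha$ to coincide on it---and hence yields a non-dense bi-infinite trajectory contradicting condition~(ii)---is a correct and rather elegant argument. However, it proves a strictly weaker conclusion than the proposition needs, and this is a genuine gap. Knowing that each minimal set is not totally disconnected only tells you it contains at least one nondegenerate arc component; it does not rule out a minimal set consisting, say, of one nondegenerate arc together with infinitely many shrinking arcs (or singleton components) accumulating somewhere. Such a set is not totally disconnected, so your argument gives no contradiction, yet it has infinitely many connected components, which is exactly what must be excluded. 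The paper's argument for ``each minimal set has finitely many components'' is different and you would need something like it: assuming a minimal set $K$ has infinitely many components $C,\mathcal{Z}(C),\mathcal{Z}^2(C),\ldots$ (where $\mathcal{Z}$ is the induced map on components, all distinct by minimality), their diameters have infimum zero, so a subsequence Hausdorff-converges to a singleton $\{q_0\}$; taking limits along that subsequence shows $f_\alpha(q_0)$ is independent of $\alpha$, and iterating (forwards and, using $f_\alpha^{-1}$, backwards) produces a non-dense bi-infinite orbit $(q_r)_{r\in\mathbb{Z}}\subset K$, contradicting (ii).

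Separately, the assertion that there are only \emph{finitely many} $\mathbf{f}$-minimal sets (needed so that $G$ is a finite union and $k_\mathbf{f}$ is even well-defined) is not addressed in your write-up. The paper does not prove this from scratch either: it invokes Theorem~B of \cite{Mal14}, for which the absence of finite invariant sets (which you have established) is a hypothesis, but which is otherwise a substantial external input. Your proposal would need either to cite this result or to supply an independent argument, and neither appears. Once finiteness of the minimal sets and of the number of components is secured, the cyclic structure $f_\alpha(G_i)\subset G_{i+l_\mathbf{f}\ \mathrm{mod}\ k_\mathbf{f}}$ does follow readily from connectedness of $\Delta$ and orientation-preservation, as your plan indicates.
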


\noindent We will prove Proposition~\ref{permute} as a consequence of \cite[Theorem~B]{Mal14} in Section~\ref{permproof}.

\begin{rmk} \label{permrem}
Letting $p \in \{1,\ldots,k_\mathbf{f}\}$ be the highest common factor of $k_\mathbf{f}$ and $l_\mathbf{f}$, and letting $q=\frac{k_\mathbf{f}}{p}$, we have that each $\mathbf{f}$-minimal set takes the form $\bigcup_{j=0}^{q-1} G_{i + jp}$ for some $i \in \{0,\ldots,p-1\}$. No connected component of $G$ is a singleton; when $k_\mathbf{f}=1$, the $\mathbf{f}$-minimal set $G$ could be \emph{either} an arc \emph{or} the whole circle. In the case that $k_\mathbf{f}=1$ and $G$ is an arc, $\mathbf{f}$ cannot admit a symmetry.
\end{rmk}

\begin{rmk} \label{ergmin}
A \emph{stationary measure} of a random circle homeomorphism $\mathbf{f}$ is a Borel probability measure $\rho$ on $\mathbb{S}^1$ such that the pushforward of $\nu \otimes \rho$ under $(\alpha,x) \mapsto f_\alpha(x)$ is equal to $\rho$; an extreme point of the convex set of all $\mathbf{f}$-stationary measures is called an \emph{ergodic measure} of $\mathbf{f}$. By \cite[Theorem~B]{Mal14}, the support of every $\mathbf{f}$-ergodic is $\mathbf{f}$-minimal, and each $\mathbf{f}$-minimal set is precisely equal to the support of exactly one $\mathbf{f}$-ergodic measure; the $\mathbf{f}$-stationary measures are simply the convex combinations of the $\mathbf{f}$-ergodic measures. So every $\mathbf{f}$-stationary measure is supported on a union of $\mathbf{f}$-minimal sets; moreover, one can show that every $\mathbf{f}$-stationary measure is atomless.
\end{rmk}

\subsection{Main result} \label{Main result}

Suppose we have two random circle homeomorphisms $\mathbf{f}=(f_\alpha)_{\alpha \in \Delta}$ and $\mathbf{g}=(g_\alpha)_{\alpha \in \Delta}$.
\\ \\
We say that \emph{$\mathbf{f}$ and $\mathbf{g}$ are deterministically topologically conjugate} if there exists a homeomorphism $h \in \mathrm{Homeo}(\mathbb{S}^1)$ such that $f_\alpha = h^{-1} \circ g_\alpha \circ h$ for all $\alpha$; if $h$ can be chosen to be orientation-preserving, then we say that \emph{$\mathbf{f}$ and $\mathbf{g}$ are deterministically orientationally conjugate}. (So a symmetry of $\mathbf{f}$ is just a homeomorphism of non-trivial finite order via which $\mathbf{f}$ is deterministically orientationally conjugate to itself.)
\\ \\
Let $(\Omega,\mathcal{F},\mathbb{P})=(\Delta^\mathbb{Z},\mathcal{B}(\Delta)^{\otimes \mathbb{Z}},\nu^{\otimes \mathbb{Z}})$, and define $\theta \colon \Omega \to \Omega$ by $\theta\left( (\alpha_n)_{n \in \mathbb{Z}} \right)=(\alpha_{n+1})_{n \in \mathbb{Z}}$. We say that \emph{$\mathbf{f}$ and $\mathbf{g}$ have topologically conjugate dynamics} if there exists an $\Omega$-indexed family $(h_\omega)_{\omega \in \Omega}$ of homeomorphisms $h_\omega \in \mathrm{Homeo}(\mathbb{S}^1)$ such that $\omega \mapsto h_\omega(x)$ is measurable for each $x \in \mathbb{S}^1$, and
\begin{equation}\label{conjugation} f_{\alpha_0} \ = \ h_{\theta\omega}^{-1} \circ g_{\alpha_0} \circ h_\omega \end{equation}
for $\mathbb{P}$-almost every $\omega \!=\! (\alpha_n)_{n \in \mathbb{Z}} \in \Omega$. Given such a family of homeomorphisms $(h_\omega)$, since $\mathbb{P}$ is ergodic with respect to $\theta$ we have that either $h_\omega$ is orientation-preserving for $\mathbb{P}$-almost all $\omega$, or $h_\omega$ is orientation-reversing for $\mathbb{P}$-almost all $\omega$; if $(h_\omega)$ can be chosen such that the former holds, then we say that \emph{$\mathbf{f}$ and $\mathbf{g}$ have orientationally conjugate dynamics}.

\begin{rmk} \label{top op}
Viewing $\mathbb{S}^1$ as the circle group $\mathbb{R}/\mathbb{Z}$: $\mathbf{f}$ and $\mathbf{g}$ have topologically conjugate dynamics if and only either
\begin{enumerate}[\indent (i)]
\item $\mathbf{f}$ and $\mathbf{g}$ have orientationally conjugate dynamics, or
\item $\mathbf{f}$ and $\bar{\mathbf{g}}$ have orientationally conjugate dynamics,
\end{enumerate}
where $\bar{\mathbf{g}}$ is the ``mirror-reversed version of $\mathbf{g}$'' given by $\bar{g}_\alpha(x)=-g_\alpha(-x)$. The corresponding statements for deterministic conjugacy also hold.
\end{rmk}

\noindent Our main theorem is the following:

\begin{thm} \label{MAIN}
Let $\mathbf{f}$ and $\mathbf{g}$ be random circle homeomorphisms, and let $k_\mathbf{f}$, $l_\mathbf{f}$, $k_\mathbf{g}$ and $l_\mathbf{g}$ be as in Proposition~\ref{permute}.

$\mathbf{f}$ and $\mathbf{g}$ have orientationally conjugate dynamics if and only if one of the following four statements holds:
\begin{enumerate}[\indent (a)]
\item $k_\mathbf{f}=k_\mathbf{g} \geq 2$, and $l_\mathbf{f}=l_\mathbf{g}$;
\item $k_\mathbf{f}=k_\mathbf{g}=1$, and neither $\mathbf{f}$ nor $\mathbf{g}$ admits a symmetry;
\item $k_\mathbf{f}=k_\mathbf{g}=1$, $\mathbf{f}$ admits a symmetry of order at least $3$, and $\mathbf{f}$ and $\mathbf{g}$ are deterministically orientationally conjugate;
\item $k_\mathbf{f}=k_\mathbf{g}=1$, $\mathbf{f}$ admits an order-$2$ symmetry but no higher-order symmetry, and $\mathbf{f}$ and $\mathbf{g}$ are deterministically \emph{topologically} conjugate.
\end{enumerate}
Hence, $\mathbf{f}$ and $\mathbf{g}$ have topologically conjugate dynamics if and only if one of the following three statements holds:
\begin{enumerate}[\indent (a')]
\item $k_\mathbf{f}=k_\mathbf{g} \geq 2$, and $l_\mathbf{g} \in \{l_\mathbf{f},k_\mathbf{f}-l_\mathbf{f}\}$;
\item $k_\mathbf{f}=k_\mathbf{g}=1$, and neither $\mathbf{f}$ nor $\mathbf{g}$ admits a symmetry;
\item $k_\mathbf{f}=k_\mathbf{g}=1$, $\mathbf{f}$ admits a symmetry, and $\mathbf{f}$ and $\mathbf{g}$ are deterministically topologically conjugate.
\end{enumerate}
\end{thm}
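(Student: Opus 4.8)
\section*{Proof proposal for Theorem~\ref{MAIN}}

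The plan is to reduce the entire statement to a sequence of existence/non-existence results about random invariant objects, then play off the ``rigid'' case (where conjugacies are forced to be deterministic) against the ``flexible'' case (where minimal sets are proper arcs and there is room to move). The first step is to set up the correct invariant objects: for a random circle homeomorphism $\mathbf{f}$, I would study the fibred invariant graphs, i.e.\ measurable families $(a_\omega)_{\omega\in\Omega}$ with $f_{\alpha_0}(a_\omega)=a_{\theta\omega}$, and more generally the random ``gaps'' in the complement of the minimal sets. By Proposition~\ref{permute} and Remark~\ref{ergmin}, $\mathbf{f}$ has a well-defined finite cyclic structure on its $k_\mathbf{f}$ boundary components of minimal sets, and the integers $k_\mathbf{f},l_\mathbf{f}$ (equivalently $p,q$ of Remark~\ref{permrem}) are manifestly invariants of orientationally conjugate dynamics, since a fibred conjugacy $(h_\omega)$ with $h_\omega$ orientation-preserving carries the union $G$ of $\mathbf{f}$-minimal sets, component by component in cyclic order, onto the union of $\mathbf{g}$-minimal sets; this already gives the ``only if'' direction of (a) and (a$'$), and reduces everything else to the case $k_\mathbf{f}=k_\mathbf{g}=1$.

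The second step handles the case $k_\mathbf{f}=k_\mathbf{g}=1$ by splitting on whether the minimal set is an arc or the whole circle, and on the symmetry group. \textbf{Sufficiency in (b):} when $k=1$ and there is no symmetry, I would construct the fibred conjugacy directly. If the minimal set is a proper arc $J_\omega$ (necessarily a random arc, since there are no symmetries by Remark~\ref{permrem}), one builds $h_\omega$ first on the random arc using a canonical (e.g.\ ``uniformize against the ergodic stationary measure'') homeomorphism between $\mathrm{supp}$ of the $\mathbf{f}$-ergodic measure and that of the $\mathbf{g}$-ergodic measure, then extends across the single complementary gap by interpolating along the dynamics; measurability comes from the explicit formulas and ergodicity of $\theta$. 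If the minimal set is all of $\mathbb{S}^1$, one uses the fact that a random circle homeo with full minimal set and no symmetry has a \emph{unique} (random) fixed point structure or, failing that, is ``prime'' in a suitable sense, and again transports the ergodic stationary measure. \textbf{Rigidity in (c) and (d):} when $\mathbf{f}$ admits a symmetry, the minimal set must be all of $\mathbb{S}^1$ (Remark~\ref{permrem}), and I would show that any fibred conjugacy $(h_\omega)$ is forced to be (almost surely) a single deterministic homeomorphism $h$. The mechanism: the symmetry group $\Sigma_\mathbf{f}$ is a finite cyclic group of rotations-up-to-conjugacy, the quotient $\mathbb{S}^1/\Sigma_\mathbf{f}$ carries an induced random circle homeomorphism whose minimal set is again everything and which is \emph{symmetry-free}, and the conjugacy descends; the lift back up is unique once one fixes how it permutes the $\Sigma$-orbits, which ergodicity pins down to a deterministic choice. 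Concretely, $h_{\theta\omega}^{-1}\circ g_{\alpha_0}\circ h_\omega=f_{\alpha_0}$ together with commutation with the symmetry forces $h_{\theta\omega}^{-1}h_\omega$ (suitably interpreted) to normalise $\Sigma$, and an ergodic-cocycle argument collapses it to a constant.

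The third step is to convert statements about orientationally conjugate dynamics into the ``hence'' statements (a$'$)--(c$'$) about topologically conjugate dynamics, using Remark~\ref{top op}: $\mathbf{f}$ and $\mathbf{g}$ have topologically conjugate dynamics iff $\mathbf{f}$ is orientationally conjugate to $\mathbf{g}$ or to $\bar{\mathbf{g}}$. One computes $k_{\bar{\mathbf{g}}}=k_\mathbf{g}$ and $l_{\bar{\mathbf{g}}}=k_\mathbf{g}-l_\mathbf{g} \bmod k_\mathbf{g}$, and notes that $\bar{\mathbf{g}}$ admits a symmetry of a given order iff $\mathbf{g}$ does, and that $\mathbf{g}$ is deterministically topologically conjugate to $\mathbf{f}$ iff $\bar{\mathbf{g}}$ is; feeding this through (a)--(d) yields (a$'$)--(c$'$), with case (d) being exactly the point where ``deterministically orientationally conjugate to $\mathbf{f}$ or $\bar{\mathbf{f}}$'' collapses to ``deterministically topologically conjugate'' and hence merges into (c$'$).

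The main obstacle I expect is the rigidity half of step two: proving that in the presence of a symmetry (cases (c), (d)), \emph{every} measurable fibred conjugacy is almost surely constant and orientation-preserving, rather than merely constructing one deterministic conjugacy when one exists. This requires ruling out ``twisted'' conjugacies that move with $\omega$, which is delicate precisely because the minimal set is the whole circle and there is no canonical marked point to anchor $h_\omega$; the argument has to extract rigidity purely from the interplay between the cocycle relation~\eqref{conjugation}, the finite symmetry group, and ergodicity of $\theta$. A secondary difficulty is making the constructions in (b) genuinely measurable in $\omega$ when the minimal set is a random arc with a single wandering gap, where one must track the gap's endpoints as a measurable random variable and interpolate consistently along orbits.
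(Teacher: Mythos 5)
Your three-step skeleton (invariance of $(k_\mathbf{f},l_\mathbf{f})$, then sufficiency in case~(b), then rigidity in cases~(c)/(d), finally the reflection argument via Remark~\ref{top op}) matches the paper's overall architecture, and the reflection step at the end is essentially exactly the paper's. But in the two places where the real work happens, your proposal is missing the key idea rather than deferring a routine calculation.

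The central gap is the rigidity argument. You write that the conjugacy ``descends'' to $\mathbb{S}^1/\Sigma_\mathbf{f}$, and that the lift is ``unique once one fixes how it permutes the $\Sigma$-orbits, which ergodicity pins down to a deterministic choice.'' This is precisely where the argument fails as stated: the descended random conjugacy $(H_\omega)$ between the quotient systems $\mathbf{F}=z_m(\mathbf{f})$ and $\mathbf{G}=z_m(\mathbf{g})$ is a genuinely $\omega$-dependent, \emph{non}-rigid object (indeed the generic case applies downstairs, since $\mathbf{F}$ and $\mathbf{G}$ are symmetry-free and minimal, so such an $H_\omega$ always exists). Fixing $H_\omega$ gives you nothing about whether the original $h_\omega$ is constant; the ergodic cocycle argument you invoke only controls the branch-choice integer $n_\omega\in\{0,\dots,m-1\}$, not the entire map. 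The paper's mechanism is quite different: it upgrades the conjugacy to one commuting with $\tau_m$ (Lemma~\ref{sym of conj}), introduces the branch indicators $\tilde T^-(\omega)=L(h_\omega,A_\mathbf{f}(\omega^-))$ and $\tilde T^+(\omega)=L(h_\omega,R_\mathbf{f}(\omega^+))$, proves via a Markov-measure argument that these factor through $\omega^-$ and $\omega^+$ respectively, and then exploits the combinatorial relation (Lemmas~\ref{vect} and~\ref{triangle}) between $T^-$ at two past-histories and $T^+$ at a shared future to force the map $A_\mathbf{f}(\omega^-)\mapsto A_\mathbf{g}(\omega^-)$ to be a deterministic homeomorphism $K$ (Lemma~\ref{redund} and Corollary~\ref{zk conj}). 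The delicate split between $K$ orientation-preserving and $m\ge 3$, $K$ orientation-reversing (which the paper rules out by the jump-count contradiction in Lemma~\ref{contradict}) is what separates cases~(c) and~(d); none of this is visible in your sketch. In addition, you do not address the random-rotation subcase of~(c) at all, which requires its own rigidity lemma (Lemma~\ref{hardconj}(A), built on Lemma~\ref{quotient} and Corollary~\ref{rot rfp}) rather than the contractive machinery.

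Two smaller points. First, in step one you assert it is ``manifest'' that an orientation-preserving fibred conjugacy carries the $\mathbf{f}$-minimal components in cyclic order to the $\mathbf{g}$-minimal components; this is not automatic, because $h_\omega$ is random and only preserves random invariant measures, not deterministic subsets. The paper instead conjugates both systems to the model map $g_{k,l}$ (Theorem~\ref{genconj}) and then proves separately (Theorem~\ref{gendisc}, via a Poincar\'e recurrence argument on the fixed repelling set of the model map) that $g_{k,l}$ and $g_{k',l'}$ cannot be conjugate unless $(k,l)=(k',l')$; the ergodicity of $\theta^k$ then pins down $l$. Second, your sufficiency construction for~(b) via ``uniformize against the ergodic stationary measure'' will not yield a conjugacy: the stationary measure of $\mathbf{f}$ does not map to that of $\mathbf{g}$ under a conjugacy that has to be consistent along orbits. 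The paper's construction is built from the strictly monotone two-sided pullback sequences of Corollary~\ref{strict pull}, which give a bi-infinite partition of each gap that the cocycle shifts by one, and $h_\omega$ is defined piecewise so as to intertwine these partitions with the corresponding deterministic partition for the model map; this is a fundamentally orbit-wise rather than measure-wise construction. So while your outline is pointed in the right direction, it would need the combinatorial branch-tracking argument supplied before it could become a proof.
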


\noindent Note that in case~(c)/(d)/(c'), $\mathbb{S}^1$ is both $\mathbf{f}$-minimal and $\mathbf{g}$-minimal. However, in case~(b)/(b'), both the $\mathbf{f}$-minimal set and the $\mathbf{g}$-minimal set could be either an arc or the whole circle $\mathbb{S}^1$.
\\ \\
Let us now briefly describe the dynamics in the different cases: As in the Introduction, a random circle homeomorphism $\mathbf{f}$ is studied ``dynamically'' by considering i.i.d.\ iterates. Let $G$ the union of the $\mathbf{f}$-minimal sets, as in Proposition~\ref{permute}.
\begin{itemize}
\item For $\mathbf{f}$ as in case~(a)/(a'), there is almost surely a $k_\mathbf{f}$-element set $\mathcal{R}$, consisting of exactly one point in each connected component of $\mathbb{S}^1 \setminus G$, such that every compact connected subset of $\mathbb{S}^1 \setminus \mathcal{R}$ contracts in diameter to $0$ at an exponential rate under the iterates of $\mathbf{f}$. (This essentially follows from results of \cite{Mal14}, together with Lemma~\ref{Li}/\ref{pull} of this paper.)
\item For $\mathbf{f}$ as in case~(b)/(b'), there is almost surely a point $r \in \mathbb{S}^1$ such that every compact subset of $\mathbb{S}^1 \setminus \{r\}$ contracts in diameter to $0$ at an exponential rate under the iterates of $\mathbf{f}$; in the case that the minimal set $G$ is a proper subset of $\mathbb{S}^1$, the point $r$ lies in $\mathbb{S}^1 \setminus G$. (See \cite{Mal14} and \cite{Ant84}.)
\item For $\mathbf{f}$ as in case~(c)/(d)/(c'), either:
\begin{enumerate}[\indent (i)]
\item $\mathbf{f}$ is deterministically orientationally conjugate to a rational rotation (in which case $\mathbf{f}$ admits symmetries of all finite orders); or
\item there is a symmetry $\tau$ of maximal order $m \in \mathbb{N}$, and there is almost surely an orbit $\mathcal{R}$ of $\tau$ such that every compact connected subset of $\mathbb{S}^1 \setminus \mathcal{R}$ contracts in diameter to $0$ at an exponential rate under the iterates of $\mathbf{f}$. (Case~(d) is precisely the situation that $m=2$.)
\end{enumerate}
(Again, see \cite{Mal14} and \cite{Ant84}.)
\end{itemize}

\subsection{Examples}

Identifying $\mathbb{S}^1 \equiv \mathbb{R}/\mathbb{Z}$, we write $[x] \in \mathbb{S}^1$ for the projection of $x \in \mathbb{R}$. The first two examples we consider will be noisy perturbations of maps taking the form
\[ f([x]) \ = \ \left[ x \, + \; \tfrac{1}{2\pi k}\sin(2\pi kx)+\tfrac{l}{k} \right] \]
for some $k \in \mathbb{N}$ and $l \in \{0,\ldots,k-1\}$. The set $\mathcal{R}_f:=\{[\frac{i}{k}] : i \in \{0,\ldots,k-1\} \}$ is a repelling invariant set of $f$, and all trajectories of $f$ not starting in $\mathcal{R}_f$ tend towards the attracting invariant set $\mathcal{A}_f:=\{[\frac{2i+1}{2k}] : i \in \{0,\ldots,k-1\} \}$. On both of these invariant sets, $f$ coincides with the rotation $[x] \mapsto [x+\frac{l}{k}]$. The rotations $[x] \mapsto [x+\frac{i}{k}]$, $0 \leq i \leq k-1$, commute with $f$; moreover, for any orientation-preserving homeomorphism $\tau$ commuting with $f$, we have that $\mathcal{A}_f$ and $\mathcal{R}_f$ are $\tau$-invariant, and so if $\tau$ has finite order then $\tau^m=\mathrm{id}_{\mathbb{S}^1}$. Given a map
\[ g([x]) \ = \ \left[ x \, + \; \tfrac{1}{2\pi k'}\sin(2\pi k'x)+\tfrac{l'}{k'} \right] \]
where $k' \in \mathbb{N}$ and $l' \in \{0,\ldots,k'-1\}$, if $f \neq g$ (i.e.\ if either $k \neq k'$ or $l \neq l'$) then there is no orientation-preserving topological conjugacy (in the classical sense) from $f$ to $g$.

\begin{ex} \label{indepnoise}
Let $\Delta=[-1,1] \times [-1,1]$, with $\nu$ the normalised Lebesgue measure, and define the random circle homeomorphisms $\mathbf{f}$ and $\mathbf{g}$ by
\begin{align*}
f_{(\alpha^1\!,\alpha^2)}([x]) \ &= \ \left[ x \, + \; \tfrac{1}{2\pi k}\sin(2\pi kx)+\tfrac{l}{k} \ + r\alpha^1 \right] \\
g_{(\alpha^1\!,\alpha^2)}([x]) \ &= \ \left[ x \, + \; \tfrac{1}{2\pi k'}\sin(2\pi k'x)+\tfrac{l'}{k'} \ + r'\alpha^2 \right]
\end{align*}
where $k,k' \in \mathbb{N}$, $l \in \{0,\ldots,k-1\}$, $l' \in \{0,\ldots,k'-1\}$, and $r,r'>0$. If $k \geq 2$ then the rotations $[x] \mapsto [x+\frac{i}{k}]$, $1 \leq i \leq k-1$, are symmetries of $\mathbf{f}$. One can check that:
\begin{enumerate}[\indent (i)]
\item if $r \leq \frac{1}{2\pi k}$, then the union of all the $\mathbf{f}$-minimal sets is a proper subset of $\mathbb{S}^1$ with $k$ connected components, each of which is an arc about a point in $\mathcal{A}_f$;
\item if $r > \frac{1}{2\pi k}$, then the whole circle $\mathbb{S}^1$ is $\mathbf{f}$-minimal.
\end{enumerate}
Obviously, the corresponding statements also hold for $\mathbf{g}$. Applying Theorem~\ref{MAIN}, we have the following:\emph{
\begin{itemize}
\item if either $k \neq k'$ or $l \neq l'$, then $\mathbf{f}$ and $\mathbf{g}$ do not have orientationally conjugate dynamics;
\item if $k=k'=1$, then $\mathbf{f}$ and $\mathbf{g}$ have orientationally conjugate dynamics (\emph{regardless} of the values of $r$ and $r'$);
\item in the case that $k=k' \geq 2$ and $l=l'$, $\mathbf{f}$ and $\mathbf{g}$ have orientationally conjugate dynamics if and only if $\max(r,r') \leq \frac{1}{2\pi k}$.
\end{itemize}
}
\end{ex}

\noindent Our next example is similar to the above, the only difference being that the added noise terms for $\mathbf{f}$ and $\mathbf{g}$ are not independent, but instead are mirror-reversed:

\begin{ex} \label{mirrornoise}
Let $\Delta=[-1,1]$, with $\nu$ the normalised Lebesgue measure, and define the random circle homeomorphisms $\mathbf{f}$ and $\mathbf{g}$ by
\begin{align*}
f_\alpha([x]) \ &= \ \left[ x \, + \; \tfrac{1}{2\pi k}\sin(2\pi kx)+\tfrac{l}{k} \ + r\alpha \right] \\
g_\alpha([x]) \ &= \ \left[ x \, + \; \tfrac{1}{2\pi k'}\sin(2\pi k'x)+\tfrac{l'}{k'} \ - r\alpha \right]
\end{align*}
where $k,k' \in \mathbb{N}$, $l \in \{0,\ldots,k-1\}$, $l' \in \{0,\ldots,k'-1\}$, and $r>0$. Applying Theorem~\ref{MAIN}, we have the following:\emph{
\begin{itemize}
\item if either $k \neq k'$ or $l \neq l'$, then $\mathbf{f}$ and $\mathbf{g}$ do not have orientationally conjugate dynamics;
\item if either $k=k'=1$, or $k=k'=2$ and $l=l'$, then $\mathbf{f}$ and $\mathbf{g}$ have orientationally conjugate dynamics;
\item in the case that $k=k' \geq 3$ and $l=l'$, $\mathbf{f}$ and $\mathbf{g}$ have orientationally conjugate dynamics if and only if $r \leq \frac{1}{2\pi k}$.
\end{itemize}
}
\end{ex}

\noindent Now in the classical (deterministic) setting, two distinct rigid rotations $f$ and $g$ of the circle cannot be topologically conjugate; and moreover, no sufficiently $C^0$-small perturbation of $f$ can be topologically conjugate to a sufficiently small $C^0$-perturbation of $g$. However, arbitrarily small \emph{random} perturbations of any two distinct rigid rotations \emph{can} have topologically conjugate dynamics:

\begin{ex} \label{rotate}
Let $\Delta=[0,1]$, with $\nu$ the normalised Lebesgue measure, and define the random circle homeomorphisms $\mathbf{f}$ and $\mathbf{g}$ by
\begin{align*}
f_\alpha([x]) \ &= \ \left[ x \, + \, c \; + \; \varepsilon\sin(2\pi (x+\alpha)) \right] \\
g_\alpha([x]) \ &= \ \left[ x \, + c' \, + \; \varepsilon\sin(2\pi (x+\alpha)) \right]
\end{align*}
where $\varepsilon \in (0,\frac{1}{2\pi}]$ and $c,c' \in [0,1)$. As in the example in \cite{LeJan87} or \cite[Section~5]{newman_2017}, $\mathbb{S}^1$ is $\mathbf{f}$-minimal and $\mathbf{g}$-minimal, and $\mathbf{f}$ and $\mathbf{g}$ do not admit a symmetry. So by Theorem~\ref{MAIN}, $\mathbf{f}$ and $\mathbf{g}$ have orientationally conjugate dynamics.
\end{ex}

\section{Preparations for the proof of the main result}

\subsection{Preliminaries}

Throughout this paper, given an element $\omega$ of the sequence space $\Omega=I^\mathbb{Z}$, we write $\alpha_i$ for the $i$-th coordinate of $\omega$.
\\ \\
We identify $\mathbb{S}^1$ with the quotient of the group $(\mathbb{R},+)$ by its subgroup $\mathbb{Z}$, with $[\cdot]$ denoting the standard projection. Given $m \in \mathbb{N}$ and $x \in \mathbb{S}^1$, we write $mx \in \mathbb{S}^1$ for the $m$-fold sum of $x$. For any $x,y \in \mathbb{S}^1$, we write $[x,y] \subset \mathbb{S}^1$ for the projection of $[x',y'] \subset \mathbb{R}$ where $x'$ may be any lift of $x$ and $y'$ is the unique lift of $y$ in $[x',x'+1)$; we write $d_+(x,y) \in [0,1)$ for the Lebesgue measure of $[x,y]$, and we define $d(x,y):=\min(d_+(x,y),d_+(y,x))$. We write $[x,y[$ for $[x,y] \setminus \{y\}$; we write $]x,y]$ for $[x,y] \setminus \{x\}$; and we write $]x,y[$ for $[x,y] \setminus \{x,y\}$.\footnote{We use this style of notation, rather than round bracket notation, because later on, we will frequently be looking at points in $\mathbb{S}^1 \times \mathbb{S}^1$.} Given a non-empty connected non-dense subset $A$ of $\mathbb{S}^1$, define the points $\partial_- A$ and $\partial_+ A$ by $\bar{A} = [\partial_- A, \partial_+ A]$.
\\ \\
Given any probability space $(\tilde{\Omega},\tilde{\mathcal{F}},\tilde{\mathbb{P}})$ and a compact metric space $X$, we say that a measure $\mu$ on $\tilde{\Omega} \times X$ \emph{has $\tilde{\Omega}$-marginal $\tilde{\mathbb{P}}$} if $\mu(E \times X)=\tilde{\mathbb{P}}(E)$ for all $E \in \mathcal{F}$. We can identify such a measure $\mu$ with its \emph{disintegration} $(\mu_\omega)_{\omega \in \tilde{\Omega}}$, that is, the unique (up to $\mathbb{P}$-a.e.~equality) $\tilde{\Omega}$-indexed family of probability measures $\mu_\omega$ on $X$ such that $\omega \mapsto \mu_\omega(A)$ is measurable for all $A \in \mathcal{B}(X)$ and
\[ \mu(E \times A) \ = \ \int_E \, \mu_\omega(A) \, \tilde{\mathbb{P}}(d\omega) \]
for all $E \in \tilde{\mathcal{F}}$ and $A \in \mathcal{B}(X)$. Given a sub-$\sigma$-algebra $\tilde{\mathcal{G}}$ of $\tilde{\mathcal{F}}$, a measure $\mu$ with $\tilde{\Omega}$-marginal $\tilde{\mathbb{P}}$ is called \emph{$\tilde{\mathcal{G}}$-measurable} if there is a version $(\mu_\omega)$ of the disintegration of $\mu$ such that $\omega \mapsto \mu_\omega(A)$ is $\tilde{\mathcal{G}}$-measurable for all $A \in \mathcal{B}(X)$.
\\ \\
A \emph{random map (over $\Delta$)} on a measurable space $Y$ is a $\Delta$-indexed family $\mathbf{f}=(f_\alpha)_{\alpha \in \Delta}$ of functions $f_\alpha \colon Y \to Y$ such that $(\alpha,x) \mapsto f_\alpha(x)$ is measurable. We define \emph{$\mathbf{f}$-stationary measures} and \emph{$\mathbf{f}$-ergodic measures} as in Remark~\ref{ergmin}. If $Y$ is a standard Borel space and $\mathbf{f}$ admits a stationary measure, then $\mathbf{f}$ must admit an ergodic measure.
\\ \\
Now let $\mathbf{f}=(f_\alpha)_{\alpha \in \Delta}$ be a random circle homeomorphism. We define the \emph{inverse of $\mathbf{f}$} to be the random circle homeomorphism $\mathbf{f}^{-1}:=(f_\alpha^{-1})_{\alpha \in \Delta}$. For any $n \in \mathbb{N}$, over the probability space $(\Delta^n,\mathcal{B}(\Delta^n),\nu^{\otimes n})$ we define the random map $\mathbf{f}^n\!=\!(f_{\boldsymbol{\alpha}}^n)_{\boldsymbol{\alpha} \in \Delta^n} := (f_{\alpha_{n-1}} \circ \ldots \circ f_{\alpha_0})_{(\alpha_0,\ldots,\alpha_{n-1}) \in \Delta^n}$. (We define $\mathbf{f}^n$-invariant and $\mathbf{f}^n$-minimal sets just as in Definition~\ref{minimal}.)

\begin{prop} \label{n minimal}
If $\mathbb{S}^1$ is $\mathbf{f}$-minimal, then $\mathbb{S}^1$ is $\mathbf{f}^n$-minimal for all $n \in \mathbb{N}$.\footnote{This statement generalises to any random map $\mathbf{f}$ on a connected compact metric space such that $(\alpha,x) \mapsto f_\alpha(x)$ is continuous. (Connectedness of $\Delta$ is not needed.)}
\end{prop}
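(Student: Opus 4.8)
The plan is to reduce the statement to a purely topological fact about finite closed covers that are cyclically permuted by $\mathbf{f}$, and then to prove that fact from connectedness together with minimality. It is convenient to work with an arbitrary connected compact metric space $X$ in place of $\mathbb{S}^1$, and with $\mathbf{f}=(f_\alpha)_{\alpha\in\Delta}$ a random map for which $(\alpha,x)\mapsto f_\alpha(x)$ is continuous; connectedness of $\Delta$ will never be used. So, assume $X$ is $\mathbf{f}$-minimal, let $G\subseteq X$ be a non-empty $\mathbf{f}^n$-invariant closed set, and aim to show $G=X$. First I would set, for $j\in\{0,\dots,n-1\}$,
\[ S_j \ := \ \overline{\bigcup_{\boldsymbol{\beta}\in\Delta^j} f_{\boldsymbol{\beta}}^j(G)}, \]
so that $S_0=G$ and each $S_j$ is non-empty and closed. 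Using compactness of $X$ and continuity of $f_\alpha$ (which together give $f_\alpha(\overline A)=\overline{f_\alpha(A)}$), one checks that $f_\alpha(S_j)\subseteq S_{(j+1)\,\mathrm{mod}\,n}$: for $j<n-1$ this is immediate from $f_\alpha\circ f_{\boldsymbol{\beta}}^j=f_{(\boldsymbol{\beta},\alpha)}^{j+1}$, and for $j=n-1$ one uses $\mathbf{f}^n$-invariance of $G$ to get $f_\alpha(S_{n-1})=\overline{\bigcup_{\boldsymbol{\beta}\in\Delta^{n-1}}f_{(\boldsymbol{\beta},\alpha)}^n(G)}\subseteq\overline{G}=S_0$. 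Consequently $\bigcup_{j=0}^{n-1}S_j$ is a non-empty closed $\mathbf{f}$-invariant set, hence equals $X$ by $\mathbf{f}$-minimality. It then remains to prove the following claim: if $S_0,\dots,S_{n-1}$ are non-empty closed subsets of $X$ with $\bigcup_j S_j=X$ and $f_\alpha(S_j)\subseteq S_{(j+1)\,\mathrm{mod}\,n}$ for all $\alpha$ and all $j$, then every $S_j=X$ (in particular $G=S_0=X$).

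To prove this claim, I would introduce, for $q\in X$, the ``pattern'' $J_q:=\{j\in\mathbb{Z}/n\mathbb{Z}:q\in S_j\}$ and the count $c(q):=|J_q|$; the cover gives $c(q)\geq 1$. The key observation is that patterns can only grow along trajectories: $q\in S_j$ forces $f_\alpha(q)\in S_{j+1}$, so $J_q+1\subseteq J_{f_\alpha(q)}$ and hence $c(f_\alpha(q))\geq c(q)$ for every $\alpha$. Let $c^\ast:=\min_{q\in X}c(q)$, which is attained since $c$ takes finitely many values. For each $m$ the set $\{q:c(q)\geq m\}=\bigcup_{|K|=m}\bigcap_{j\in K}S_j$ is closed, so $\{q:c(q)\geq c^\ast+1\}$ is closed; it is a proper subset of $X$ (a minimiser of $c$ is not in it) and, by the monotonicity just noted, it is $\mathbf{f}$-invariant, hence empty by $\mathbf{f}$-minimality. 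Thus $c(q)=c^\ast$ for all $q\in X$. Now for each $K\subseteq\mathbb{Z}/n\mathbb{Z}$ with $|K|=c^\ast$ the set $E_K:=\{q:J_q=K\}$ equals $\bigcap_{j\in K}S_j$ (since $K\subseteq J_q$ together with $|J_q|=|K|$ forces $K=J_q$), so $E_K$ is closed; moreover the sets $E_K$ are pairwise disjoint and cover $X$. As there are only finitely many of them and $X$ is connected, exactly one $E_{K_0}$ is non-empty, so $J_q=K_0$ for every $q$; that is, $S_j=X$ for $j\in K_0$ and $S_j=\emptyset$ for $j\notin K_0$. Since each $S_j$ is non-empty, $K_0=\mathbb{Z}/n\mathbb{Z}$ and $S_j=X$ for all $j$, completing the proof.

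The heart of the argument is this last claim, and within it the observation $c\circ f_\alpha\geq c$, which converts the ``large-pattern'' set $\{q:c(q)\geq c^\ast+1\}$ into a closed $\mathbf{f}$-invariant set that $\mathbf{f}$-minimality can annihilate; once every point lies in exactly $c^\ast$ of the $S_j$, connectedness of $X$ finishes matters via the clopen partition $\{E_K\}$. I expect the only point needing care to be the routine verification in the first paragraph that $f_\alpha(S_j)\subseteq S_{(j+1)\,\mathrm{mod}\,n}$, especially at the wrap-around index $j=n-1$, where $\mathbf{f}^n$-invariance of $G$ is exactly what is needed; the rest is bookkeeping.
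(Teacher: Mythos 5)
Your proof is correct, and it takes a genuinely different route from the paper's. The paper argues by contradiction on the \emph{smallest} bad $n \geq 2$: it picks an $\mathbf{f}^n$-minimal set $G_n$, forms the descending chain of \emph{preimages} $G_{n-k} := \bigcap_{\boldsymbol{\alpha}\in\Delta^k}(f_{\boldsymbol{\alpha}}^k)^{-1}(G_n)$, shows $\bigcup_{r=1}^n G_r$ is $\mathbf{f}$-invariant, and uses the minimality of $n$ to deduce that $G_n$ is disjoint from every $G_r$ ($1 \leq r \leq n-1$), so that connectedness makes $\bigcup_{r=1}^n G_r$ a proper $\mathbf{f}$-invariant closed set, a contradiction. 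You instead work with an arbitrary $\mathbf{f}^n$-invariant closed set $G$ (no reduction to the smallest bad $n$, no appeal to an $\mathbf{f}^n$-minimal set), build the \emph{forward} orbit closures $S_j = \overline{\bigcup_{\boldsymbol{\beta}} f_{\boldsymbol{\beta}}^j(G)}$, and then isolate the purely topological fact that a finite closed cover which is cyclically permuted by $\mathbf{f}$ must be trivial. Your mechanism for that fact --- the multiplicity function $c$, its monotonicity under $f_\alpha$, and the resulting finite clopen partition into the sets $E_K$ --- is a clean reorganization: it makes the two uses of minimality (once to get $\bigcup S_j = X$, once to kill $\{c \geq c^*+1\}$) and the single use of connectedness (the clopen partition) very transparent, whereas the paper packs these into a single disjointness-plus-connectedness contradiction. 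Both arguments use only continuity and compactness, so both generalize exactly as the footnote states; yours has the small advantage of not needing to invoke existence of $\mathbf{f}^n$-minimal sets inside an invariant set, and of avoiding the induction on $n$.
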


\begin{proof}
For a contradiction, suppose that $\mathbb{S}^1$ is $\mathbf{f}$-minimal and let $n \geq 2$ be the smallest integer such that $\mathbb{S}^1$ is not $\mathbf{f}^n$-minimal. Let $G_n$ be an $\mathbf{f}^n$-minimal set, and define the sets $G_0,\ldots,G_{n-1}$ by
\[ G_{n-k} \ := \ \bigcap_{\boldsymbol{\alpha} \in \Delta^k} (f_{\boldsymbol{\alpha}}^k)^{-1}(G_n) \ = \ \bigcap_{\alpha \in \Delta} f_\alpha^{-1}(G_{n-k+1}). \]
Since $G_n$ is $\mathbf{f}^n$-invariant, we have that $G_n \subset G_0$; hence
\[ f_\alpha\left( \bigcup_{r=1}^n G_r \right) \ \subset \ f_\alpha\left(\bigcup_{r=0}^{n-1} G_r \right) \ \subset \ \bigcup_{r=1}^n G_r \]
for all $\alpha \in \Delta$, and so $\bigcup_{r=1}^n G_r$ is $\mathbf{f}$-invariant. Moreover, for any $0 \leq r \leq n-1$, for any $\alpha_0,\ldots,\alpha_{n-1} \in \Delta$, we have that
\[ f_{(\alpha_0,\ldots,\alpha_{n-1})}^n(G_r) \ \subset \ f_{(\alpha_{n-r},\ldots,\alpha_{n-1})}^r(G_n) \ \subset \ f_{(\alpha_{n-r},\ldots,\alpha_{n-1})}^r(G_0) \ \subset \ G_r \]
and so $G_r$ is $\mathbf{f}^n$-invariant; but for each $1 \leq r \leq n-1$, $G_n$ is not $\mathbf{f}^{n-r}$-invariant, and so $G_n \not\subset G_r$. Hence, since $G_n$ is $\mathbf{f}^n$-minimal, we have that $G_r \cap G_n=\emptyset$ for each $1 \leq r \leq n-1$. So since $\mathbb{S}^1$ is connected, the $\mathbf{f}$-invariant set $\bigcup_{r=1}^n G_r$ is a proper subset of $\mathbb{S}^1$, contradicting that $\mathbb{S}^1$ is $\mathbf{f}$-minimal.
\end{proof}

\noindent We define the notations
\begin{align*}
\varphi_\mathbf{f}(n,\omega) \ :=& \ f_{(\alpha_0,\ldots,\alpha_{n-1})}^n \ = \ f_{\alpha_{n-1}} \circ \ldots \circ f_{\alpha_0} \\
\varphi_\mathbf{f}(-n,\omega) \ :=& \ (f_{(\alpha_{-n},\ldots,\alpha_{-1})}^n)^{-1} \ = \ (f_{\alpha_{-1}} \circ \ldots \circ f_{\alpha_{-n}})^{-1}
\end{align*}
for any $n \in \mathbb{N}_0$ and $\omega \in \Omega$. We also define $\Theta_\mathbf{f} \colon \Omega \times \mathbb{S}^1 \to \Omega \times \mathbb{S}^1$ by
\[ \Theta_\mathbf{f}(\omega,x) \ = \ (\theta\omega,f_{\alpha_0}(x)). \]
A \emph{$\varphi_\mathbf{f}$-invariant measure} (resp.~\emph{$\varphi_\mathbf{f}$-ergodic measure}) is a measure $\mu$ on $\Omega \times \mathbb{S}^1$ that is $\Theta_\mathbf{f}$-invariant (resp.~$\Theta_\mathbf{f}$-ergodic) and has $\Omega$-marginal $\mathbb{P}$. One can show that a measure $\mu$ with $\Omega$-marginal $\mathbb{P}$ is $\varphi_\mathbf{f}$-invariant if and only if $\mu_{\theta\omega} = f_{\alpha_0\ast}\mu_\omega$ for $\mathbb{P}$-almost all $\omega$. The $\varphi_\mathbf{f}$-ergodic measures are precisely the extreme points of the convex set of all $\varphi_\mathbf{f}$-invariant measures. A \emph{random fixed point of $\varphi_\mathbf{f}$} is a measurable function $a \colon \Omega \to \mathbb{S}^1$ such that $f_{\alpha_0}(a(\omega))=a(\theta\omega)$ for $\mathbb{P}$-almost all $\omega$. For any random fixed point $a$ of $\varphi_\mathbf{f}$, $(\delta_{a(\omega)})_{\omega \in \Omega}$ is a $\varphi_\mathbf{f}$-ergodic measure.
\\ \\
Define the sub-$\sigma$-algebras $\mathcal{F}_-$ and $\mathcal{F}_+$ of $\mathcal{F}$ by
\[ \mathcal{F}_- \, = \, \sigma(\omega \mapsto \alpha_i : i < 0) \hspace{4mm} \textrm{and} \hspace{4mm} \mathcal{F}_+ \, = \, \sigma(\omega \mapsto \alpha_i : i \geq 0). \]
It is well-known (\cite[Section~1.7]{Arn98}) that the map sending a measure $\mu$ on $\Omega \times \mathbb{S}^1$ to its  $\mathbb{S}^1$-marginal $\rho := \mu(\Omega \times \,\cdot\,)\,$ serves as a bijection between:
\begin{enumerate}[\indent (a)]
\item the set of $\mathcal{F}_-$-measurable $\varphi_\mathbf{f}$-invariant measures and the set of $\mathbf{f}$-stationary measures;
\item the set of $\mathcal{F}_-$-measurable $\varphi_\mathbf{f}$-ergodic measures and the set of $\mathbf{f}$-ergodic measures;
\item the set of $\mathcal{F}_+$-measurable $\varphi_\mathbf{f}$-invariant measures and the set of $\mathbf{f}^{-1}$-stationary measures;
\item the set of $\mathcal{F}_+$-measurable $\varphi_\mathbf{f}$-ergodic measures and the set of $\mathbf{f}^{-1}$-ergodic measures;
\end{enumerate}
\noindent with the inverse map $\rho \mapsto \mu$ being as follows:
\begin{itemize}
\item in case~(a)/(b), $\varphi_\mathbf{f}(n,\theta^{-n}\omega)_\ast\rho \,\to\, \mu_\omega$ weakly as $n \to \infty$ for $\mathbb{P}$-almost all $\omega$;
\item in case~(c)/(d), $\varphi_\mathbf{f}(-n,\theta^n\omega)_\ast\rho \,\to\, \mu_\omega$ weakly as $n \to \infty$ for $\mathbb{P}$-almost all $\omega$.
\end{itemize}

\noindent Now suppose we have two random circle homeomorphisms $\mathbf{f}$ and $\mathbf{g}$ that have topologically conjugate dynamics; an $\Omega$-indexed family $(h_\omega)$ of homeomorphisms $h_\omega \colon \mathbb{S}^1 \to \mathbb{S}^1$ fulfilling the description in Section~\ref{Main result} is called a \emph{topological conjugacy from $\varphi_\mathbf{f}$ to $\varphi_\mathbf{g}$}. In the case that $h_\omega$ is orientation-preserving for $\mathbb{P}$-almost all $\omega$, we refer to $(h_\omega)$ as an \emph{orientation-preserving conjugacy from $\varphi_\mathbf{f}$ to $\varphi_\mathbf{g}$}. Given a topological conjugacy $(h_\omega)$ from $\varphi_\mathbf{f}$ to $\varphi_\mathbf{g}$, the map $(\mu_\omega) \mapsto (h_{\omega\ast}\mu_\omega)$ serves as a bijection from the set of $\varphi_\mathbf{f}$-invariant (resp.\ $\varphi_\mathbf{f}$-ergodic) measures to the set of $\varphi_\mathbf{g}$-invariant (resp.\ $\varphi_\mathbf{g}$-ergodic) measures.

\subsection{Proof of Proposition~\ref{permute} and Remark~\ref{permrem}} \label{permproof}

Let $\mathbf{f}$ be a random circle homeomorphism. Since $\Delta$ is connected, any non-empty finite $\mathbf{f}$-invariant set $P$ can be enumerated $\{x_0,\ldots,x_{n-1}\}$ in such a manner that $f_\alpha(x_i) = x_{i+1 \;\mathrm{mod}\;n}$ for all $\alpha$ and $i$, contradicting our non-degeneracy assumption; so there is no non-empty finite $\mathbf{f}$-invariant set. Hence, by \cite[Theorem~B]{Mal14}, there are finitely many $\mathbf{f}$-minimal sets. We now aim to prove that each $\mathbf{f}$-minimal set has finitely many connected components; the rest of Proposition~\ref{permute}, together with all but the final statement in Remark~\ref{permrem}, then follows immediately from the fact that $\Delta$ is connected and $f_\alpha$ is orientation-preserving.
\\ \\
Let $G$ be the union of the $\mathbf{f}$-minimal sets. For any connected component $C$ of $G$ and any $\alpha \in \Delta$, since $f_\alpha(G) \subset G$ and $f_\alpha(C)$ is connected, we have that $f_\alpha(C)$ is contained in a connected component $\mathcal{Z}(C)$ of $G$; and since $\Delta$ is connected, $\mathcal{Z}(C)$ is independent of $\alpha$.
\\ \\
Now suppose for a contradiction that there is an $\mathbf{f}$-minimal set $K$ with infinitely many connected components, and fix any connected component $C$ of $K$. Since $K$ is $\mathbf{f}$-minimal, the sets $C, \ \mathcal{Z}(C), \ \mathcal{Z}^2(C), \, \ldots$ must all be distinct; so let $\{q_0\}$ be the limit of a convergent (in the Hausdorff metric) subsequence $(\mathcal{Z}^{m_n}(C))_{n \geq 0}$. Given any $\alpha \in \Delta$ and a sequence $(x_n)_{n \geq 0}$ with $x_n \in \mathcal{Z}^{m_n}(C)$ for each $n$, we obviously have that $f_\alpha(x_n) \to f_\alpha(q_0)$; but $f_\alpha(x_n) \in \mathcal{Z}^{m_n+1}(C)$ for each $n$, and so since the sets $\mathcal{Z}^{m_n+1}(C)$ are all distinct, we must have that $\mathcal{Z}^{m_n+1}(C)$ converges to the singleton $\{f_\alpha(q_0)\}$ as $n\to\infty$. So $q_1:=f_\alpha(q_0)$ is independent of $\alpha$. By repeating this argument we can obtain a sequence $(q_r)_{r \geq 0}$ in $K$ such that $f_\alpha(q_r)=q_{r+1}$ for all $\alpha \in \Delta$ and $r \geq 0$. Now given any $\alpha \in \Delta$ and a sequence $(y_n)_{n \geq 1}$ with $y_n \in \mathcal{Z}^{m_n-1}(C)$ for each $n$, we have that $y_n \to f_\alpha^{-1}(q_0)$, and so once again, $\mathcal{Z}^{m_n-1}(C)$ converges to the singleton $\{f_\alpha^{-1}(q_0)\}=:\{q_{-1}\}$. Repeating this argument, we can obtain $(q_r)_{r \leq 0}$ in $K$ such that $f_\alpha(q_{r-1})=q_r$ for all $\alpha \in \Delta$ and $r \leq 0$. Now $\{q_r:r \in \mathbb{Z}\} \subset K \neq \mathbb{S}^1$, so $\{q_r:r \in \mathbb{Z}\}$ is not dense, contradicting our non-degeneracy assumption. Thus we have proved that each $\mathbf{f}$-minimal set must have finitely many connected components.
\\ \\
We now prove the last statement in Remark~\ref{permrem}. Suppose that $\mathbf{f}$ has a unique minimal set, and this minimal set is an arc $G$. Take any $\tau \in \mathrm{Homeo}^+(\mathbb{S}^1)$ that commutes with $f_\alpha$ for every $\alpha$. Then $\tau(G)$ is $\mathbf{f}$-invariant, and so $G \subset \tau(G)$; since $G$ is an arc, it follows that $\tau$ has a fixed point in $G$, so in particular, $\tau$ is not a symmetry.

\subsection{Invariant measures of non-minimal random circle homeomorphisms}

\noindent Throughout the rest of this paper, we will often drop ``$\mathrm{mod} \ k_\mathbf{f}$'' from within subscripts and superscripts, when it is clear from the context; so for instance, $l_\mathbf{f}$ in Proposition~\ref{permute} is defined such that for all $\alpha$,
\[ f_\alpha(G_i) \ \subset \ G_{i+l_\mathbf{f}}. \]

\begin{lemma} \label{Li}
Let $\mathbf{f}$ be a random circle homeomorphism such that $\mathbb{S}^1$ is not $\mathbf{f}$-minimal. Let $G$, $k_\mathbf{f}$ and $l_\mathbf{f}$ be as in Proposition~\ref{permute}, with $G_0,\ldots,G_{k_\mathbf{f}-1}$ being the connected components of $G$ going anticlockwise round the circle, and let $p$ and $q$ be as in Remark~\ref{permrem}. For each $0 \leq i \leq k_\mathbf{f}-1$, let $H_i:=[\partial_+G_i,\partial_-G_{i+1}]$, and let $L_i:=\bigcup_{j=0}^q H_{i+jp}$. Then $L_i$ is $\mathbf{f}^{-1}$-invariant, and there is exactly one $\mathbf{f}^{-1}$-stationary measure assigning full measure to $L_i$.
\end{lemma}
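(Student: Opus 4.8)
Write $\mathbf{f}$ restricted to the relevant objects. First I would observe that $\mathbb{S}^1 \setminus G$ is the disjoint union of the open arcs $]\partial_+ G_i, \partial_- G_{i+1}[$, and by Proposition~\ref{permute} we have $f_\alpha(G_i) \subset G_{i+l_\mathbf{f}}$ for every $\alpha$; since $f_\alpha$ is an orientation-preserving homeomorphism sending $G$ into $G$, it must send each gap arc onto (a subset of) a gap arc in an index-preserving-up-to-shift way, i.e.\ $f_\alpha(H_i) \supset$ \ldots\ actually the cleaner statement is $f_\alpha^{-1}(H_{i+l_\mathbf{f}}) \supset H_i$ is false in general; rather, since $f_\alpha(\partial_+ G_i) \in G_{i+l_\mathbf{f}}$ and $f_\alpha(\partial_- G_{i+1}) \in G_{i+1+l_\mathbf{f}}$, we get $f_\alpha^{-1}([\partial_+ G_{i+l_\mathbf{f}}, \partial_- G_{i+1+l_\mathbf{f}}]) \supset [\text{something} \supset \partial_+ G_i, \partial_- G_{i+1} \supset \text{something}]$ — the point is simply that $f_\alpha^{-1}(H_{i+l_\mathbf{f}}) \supset H_i$, hence $f_\alpha^{-1}(L_{i+l_\mathbf{f}}) \supset L_i$. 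Since $\gcd(k_\mathbf{f}, l_\mathbf{f}) = p$ and $L_i$ depends only on $i \bmod p$, and $l_\mathbf{f} \equiv 0 \pmod p$, we have $L_{i+l_\mathbf{f}} = L_i$, so $f_\alpha^{-1}(L_i) \supset L_i$ for every $\alpha$, i.e.\ $L_i$ is $\mathbf{f}^{-1}$-invariant. This handles the first assertion; I should double-check the arithmetic that $l_\mathbf{f}$ is a multiple of $p$, which is immediate since $p \mid l_\mathbf{f}$ by definition of $p = \gcd(k_\mathbf{f}, l_\mathbf{f})$.

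**Existence of an $\mathbf{f}^{-1}$-stationary measure on $L_i$.** Since $L_i$ is a non-empty closed $\mathbf{f}^{-1}$-invariant set, the random map $\mathbf{f}^{-1}$ restricted to $L_i$ admits a stationary measure (Krylov–Bogolyubov: iterate $\nu^{\otimes n}$-averages of the pushforward of any point mass in $L_i$ and extract a weak-$*$ limit, which lives on $L_i$ by invariance and compactness). This gives at least one $\mathbf{f}^{-1}$-stationary measure $\rho$ with $\rho(L_i) = 1$.

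**Uniqueness — the main obstacle.** This is where the real work lies. I expect to argue as follows. First, by Remark~\ref{ergmin} applied to $\mathbf{f}^{-1}$ (which is also a random circle homeomorphism — note condition~(ii) is symmetric under inversion), every $\mathbf{f}^{-1}$-stationary measure is atomless and supported on a union of $\mathbf{f}^{-1}$-minimal sets, each of which is the support of a unique $\mathbf{f}^{-1}$-ergodic measure. So it suffices to show that $L_i$ contains exactly one $\mathbf{f}^{-1}$-minimal set, and that this minimal set is \emph{not} contained in any other $L_{i'}$ with $i' \ne i$ (the latter being automatic since the $L_{i'}$, for distinct $i' \bmod p$, overlap only in $G$, but a minimal set inside $L_i$ cannot sit inside $G$ — because $G$ and its components are $\mathbf{f}$-invariant, hence an $\mathbf{f}^{-1}$-minimal set inside a component $G_j$ would force, by applying the $\mathbf{f}$-invariance, extra structure contradicting that $G_j$ was already a minimal-set-component; more carefully, any $\mathbf{f}^{-1}$-invariant closed subset of $G_j$ that is $\mathbf{f}^{-1}$-minimal would be a point or have boundary points that are $\mathbf{f}$-invariant as a finite set, contradicting non-degeneracy). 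The heart of the matter: showing $L_i$ has a \emph{unique} $\mathbf{f}^{-1}$-minimal set. I would identify this minimal set explicitly as a boundary set. Intuitively, under $\mathbf{f}^{-1}$ the components of $G$ (which attract under $\mathbf{f}$) become repellers, and the gaps $H_i$ "fill up" the complement; the minimal set for $\mathbf{f}^{-1}$ inside $L_i$ should be the $\mathbf{f}^{-1}$-orbit closure of the boundary points $\partial_+ G_{i+jp}, \partial_- G_{i+jp}$ — and these boundary points are permuted among themselves by each $f_\alpha$ in the same combinatorial pattern (shift by $l_\mathbf{f}$), because $f_\alpha$ maps $G_j$ into $G_{j+l_\mathbf{f}}$ preserving orientation so it maps $\partial_\pm G_j$ to $\partial_\pm G_{j+l_\mathbf{f}}$... no, it maps $\partial_- G_j$ to a point $\geq \partial_- G_{j+l_\mathbf{f}}$, so the boundary points need not be fixed. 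The correct approach is to invoke Malicet's classification directly: by \cite[Theorem~B]{Mal14}, $\mathbf{f}^{-1}$ has finitely many minimal sets and (as in Remark~\ref{ergmin}) stationary measures are convex combinations of ergodics supported on these; so uniqueness on $L_i$ reduces to a connectedness/combinatorial argument showing that any two $\mathbf{f}^{-1}$-invariant closed proper subsets of $L_i$ that are disjoint would, via the index-shift structure and the fact that $L_i$ has exactly $q$ "gap" pieces cyclically permuted by $f_\alpha^{-1}$ with period $q$, force a non-dense bi-infinite $\mathbf{f}$-orbit — contradicting condition~(ii). I anticipate the delicate point is ruling out that $L_i$ splits into two or more $\mathbf{f}^{-1}$-minimal sets: this requires showing the $q$-cycle $H_i \to H_{i+p} \to \cdots$ (under $\mathbf{f}^{-1}$, shifting by $l_\mathbf{f}$, which generates the cyclic group of order $q$ on indices mod $p$... wait, $l_\mathbf{f}/p$ is coprime to $q$, so it does generate) is genuinely "transitive" in the dynamical sense, i.e.\ an $\mathbf{f}^{-1}$-minimal set meeting one $H_{i+jp}$ must meet all of them, and then within the cycle the minimal set is unique by an Antonov-type / Malicet-type argument on each arc. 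I would carry this out by first reducing (via passing to $\mathbf{f}^{-q}$ or $\mathbf{f}^{-k_\mathbf{f}/p \cdot \text{something}}$) to the case $q = 1$, where $L_i = H_i$ is a single arc with $H_i$ being $\mathbf{f}^{-1}$-invariant, and then the uniqueness of the $\mathbf{f}^{-1}$-minimal set inside the arc $H_i$ follows from the fact that an orientation-preserving random homeomorphism of an arc (equivalently, of the circle with the endpoints of $H_i$'s complement collapsed, or just $\mathbf{f}^{-1}$ restricted there) with no finite invariant set has a unique minimal set — again a consequence of \cite[Theorem~B]{Mal14} applied appropriately, or of the structure already extracted in Proposition~\ref{permute}/Remark~\ref{permrem} applied to $\mathbf{f}^{-1}|_{H_i}$. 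The cleanest writeup will probably avoid the reduction and instead directly apply Remark~\ref{ergmin} to $\mathbf{f}^{-1}$ and then show $L_i$ carries a unique $\mathbf{f}^{-1}$-ergodic measure by showing its support (an $\mathbf{f}^{-1}$-minimal set) is forced to be a specific boundary set whose uniqueness is combinatorially transparent.
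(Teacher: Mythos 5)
Your first half has a sign error that happens to cancel: you assert $f_\alpha^{-1}(H_{i+l_\mathbf{f}}) \supset H_i$, but this is backwards. Since $f_\alpha(\partial_+ G_i) \in G_{i+l_\mathbf{f}}$ and $f_\alpha(\partial_- G_{i+1}) \in G_{i+1+l_\mathbf{f}}$ with $f_\alpha$ orientation-preserving, one gets $f_\alpha(H_i) \supset H_{i+l_\mathbf{f}}$, hence $f_\alpha^{-1}(H_{i+l_\mathbf{f}}) \subset H_i$; summing over $j$ and using $L_{i+l_\mathbf{f}}=L_i$ gives $f_\alpha^{-1}(L_i)\subset L_i$, which is what $\mathbf{f}^{-1}$-invariance means (the paper's Definition~\ref{minimal} requires $f_\alpha^{-1}(L_i)\subset L_i$, not $\supset$). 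You have the containment reversed and the target definition reversed, so the end claim comes out right, but as written the step doesn't hold up.

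The real gap is in uniqueness. You explicitly flag it as the heart of the matter, but the route you sketch does not go through. Reducing to $q=1$ and then asserting that a random orientation-preserving homeomorphism of an arc with no finite invariant set has a unique minimal set is false in general: an arc can carry two disjoint Cantor-like $\mathbf{f}^{-1}$-minimal sets with a gap between them, and condition~(ii) of the definition does not rule this out (the non-degeneracy condition only forbids a non-dense \emph{bi-infinite} $\mathbf{f}$-orbit, and points of an $\mathbf{f}^{-1}$-minimal set only have forward $\mathbf{f}^{-1}$-orbits trapped in $L_i$; the backward $\mathbf{f}^{-1}$-orbits, i.e.\ forward $\mathbf{f}$-orbits, will typically escape $L_i$, so no non-dense bi-infinite orbit is produced). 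What the paper actually does is different and crucially uses the \emph{global} structure: assuming two distinct $\mathbf{f}^{-1}$-ergodic measures live on $L_i$, each of their supports meets every $H_{i+jp}$ (by stationarity and the cyclic index-shift), so the union $U$ of all connected components of $\mathbb{S}^1 \setminus G'$ lying inside $L_i$ (where $G'$ is the union of $\mathbf{f}^{-1}$-minimal sets) is non-empty; since $G \setminus G'$ is $\mathbf{f}$-invariant and $\partial U \subset G'$, one shows $\bar U$ is $\mathbf{f}$-invariant and closed, so $\bar U$ must contain an $\mathbf{f}$-minimal set and hence some entire $G_j$ --- but $U \subset L_i$, which contains no $G_j$. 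That interplay between $\mathbf{f}$-minimality of $G$ and $\mathbf{f}^{-1}$-minimality of $G'$ is the key idea missing from your sketch, and without it the uniqueness claim is unproven.
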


\begin{proof}
Fix $i$. We have that $H_{i+jp+l_\mathbf{f}} \subset f_\alpha(H_{i+jp})$ for all $\alpha$ and $j$, so $L_i$ is $\mathbf{f}^{-1}$-invariant. Suppose for a contradiction that there are two distinct $\mathbf{f}^{-1}$-ergodic measures $\rho_1$ and $\rho_2$ assigning full measure to $L_i$, with supports $K_1$ and $K_2$. Let $G'$ be the union of the $\mathbf{f}^{-1}$-minimal sets, and let $U$ be the union of all those connected components of $\mathbb{S}^1 \setminus G'$ that are contained in $L_i$; note that $K_1$ and $K_2$ intersect $H_{i+jp}$ for all $j$, and so $U$ is non-empty. Moreover, since $G \setminus G'$ is $\mathbf{f}$-invariant and the boundary points of $U$ belong to $\mathbf{f}^{-1}$-minimal sets, we must have that $\bar{U}$ is $\mathbf{f}$-invariant. Hence $\bar{U}$ must contain some connected component of $G$, contradicting that $U \subset L_i$.
\end{proof}

\noindent For the next lemma, we introduce the following notations (representing ``partially-strict monotone convergence''):
\begin{itemize}
\item we write ``$x_n \searrow x$'' to mean ``$x_n \to x$ and $x_{n+1} \in \;]x,x_n]$ for all $n \geq 0$'';
\item we write ``$x_n \nearrow x$'' to mean ``$x_n \to x$ and $x_{n+1} \in [x_n,x[$ for all $n \geq 0$''.
\end{itemize}

\begin{lemma} \label{pull}
In the setting of Lemma~\ref{Li}, there exist $\mathcal{F}_-$-measurable functions $a_{i\,} \colon \Omega \to G_i^\circ$ and $\mathcal{F}_+$-measurable functions $r_{i\,} \colon \Omega \to H_i^\circ$ (with $0 \leq i \leq k-1)$, such that: (i)~for each $i$, the measures
\[ \left(\frac{1}{q}\,\sum_{j=0}^{q-1} \delta_{a_{i+jp}(\omega)}\right)_{\omega \in \Omega} \hspace{2mm}\textrm{and}\hspace{4mm} \left(\frac{1}{q}\,\sum_{j=0}^{q-1} \delta_{r_{i+jp}(\omega)}\right)_{\omega \in \Omega} \]
are $\varphi_\mathbf{f}$-invariant; and (ii)~for $\mathbb{P}$-almost all $\omega$, for each $0 \leq i \leq k-1$,
\[ \hspace{-1mm} \begin{array}{l l l}
u_n^i(\omega)\,:=\,\varphi_\mathbf{f}(n,\theta^{-n}\omega)(\partial_-G_{i-nl_\mathbf{f}}) \nearrow a_i(\omega), & & u_{-n}^i(\omega)\,:=\,\varphi_\mathbf{f}(-n,\theta^n\omega)(\partial_-G_{i+nl_\mathbf{f}}) \searrow r_{i-1}(\omega), \\
\hspace{0.3mm}v_n^i(\omega)\,:=\,\varphi_\mathbf{f}(n,\theta^{-n}\omega)(\partial_+G_{i-nl_\mathbf{f}}) \searrow a_i(\omega), & & \hspace{0.3mm}v_{-n}^i(\omega)\,:=\,\varphi_\mathbf{f}(-n,\theta^n\omega)(\partial_+G_{i+nl_\mathbf{f}}) \nearrow r_i(\omega).
\end{array}\]
\end{lemma}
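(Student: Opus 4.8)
The plan is to construct the functions $a_i$ and $r_i$ as suitable limits built from the iterated images of the endpoints $\partial_\pm G_i$, using the monotone structure that the orientation-preserving property forces, and then to identify these limits with the barycentres (or the unique atoms) of the invariant measures supplied by Lemma~\ref{Li}. First I would fix $i$ and track the orbit of the single endpoint $\partial_- G_i$ under pullback. For $n \geq 0$, the point $\varphi_\mathbf{f}(n,\theta^{-n}\omega)(\partial_- G_{i-nl_\mathbf{f}})$ lies in $G_i$ (since $f_\alpha(G_j)\subset G_{j+l_\mathbf{f}}$), and because each $f_\alpha$ is an orientation-preserving homeomorphism carrying $G_{j}$ into $G_{j+l_\mathbf{f}}$ and fixing the anticlockwise order, one checks that the sequence $u_n^i(\omega)$ is monotone in the partially-strict sense of ``$\nearrow$'' once one observes that $\varphi_\mathbf{f}(n+1,\theta^{-(n+1)}\omega)(\partial_-G_{i-(n+1)l_\mathbf{f}})$ is the image under $\varphi_\mathbf{f}(n,\theta^{-n}\omega)$ of a point in $G_{i-nl_\mathbf{f}}$ lying weakly anticlockwise of (and possibly equal to, on a null set) $\partial_- G_{i-nl_\mathbf{f}}$; the non-degeneracy of $\mathbf{f}$ and atomlessness of stationary measures (Remark~\ref{ergmin}) are what rule out the sequence being eventually constant. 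Monotone bounded sequences in an arc converge, so $u_n^i(\omega) \nearrow a_i^-(\omega)$ for some limit; the same argument applied to $\partial_+ G_i$ gives $v_n^i(\omega) \searrow a_i^+(\omega)$ with $a_i^-(\omega) \le a_i^+(\omega)$ inside $G_i$.

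The key step is then to show $a_i^-(\omega) = a_i^+(\omega) =: a_i(\omega)$ and that this common value lies in the interior $G_i^\circ$. For this I would use the $\varphi_\mathbf{f}$-invariance that the limits automatically inherit: the measure $\left(\tfrac1q\sum_{j}\delta_{a_{i+jp}^{\pm}(\omega)}\right)$ has $\Omega$-marginal $\mathbb{P}$ and satisfies the cocycle relation $f_{\alpha_0\ast}(\cdot)_\omega = (\cdot)_{\theta\omega}$ by continuity of $(\alpha,x)\mapsto f_\alpha(x)$ and passage to the limit, hence is $\varphi_\mathbf{f}$-invariant, hence (being $\mathcal{F}_-$-measurable, as $u_n^i(\omega)$ depends only on $\alpha_{-1},\dots,\alpha_{-n}$) projects to an $\mathbf{f}$-stationary measure supported on $G$. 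By Lemma~\ref{Li} applied to $\mathbf{f}$ itself — more precisely, by the dual statement that there is a unique $\mathbf{f}$-stationary measure on each $L_i'$-analogue, i.e. the uniqueness of ergodic measures on minimal sets from Remark~\ref{ergmin} — together with the weak-convergence characterisation of the disintegration in case~(a)/(b) (``$\varphi_\mathbf{f}(n,\theta^{-n}\omega)_\ast\rho \to \mu_\omega$''), the measures $\varphi_\mathbf{f}(n,\theta^{-n}\omega)_\ast \delta_{\partial_\mp G_{i-nl_\mathbf{f}}}$ must converge to the \emph{same} disintegration point, forcing $\delta_{a_i^-(\omega)} = \delta_{a_i^+(\omega)}$ for a.e.\ $\omega$ (here one uses that the relevant ergodic measure is atomless, so its disintegration is a.e.\ a single atom — this is the standard fact that for a random circle homeomorphism whose minimal set is an arc, the ``pullback'' of the ergodic measure is a random Dirac mass; cf.\ \cite{Mal14},\cite{Ant84}). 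Interiority follows because the limit cannot be an endpoint of $G_i$: an endpoint would be a random fixed point giving a finite invariant set, contradicting non-degeneracy.

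Next I would carry out the symmetric construction for $r_i$: the gaps $H_i$ are $\mathbf{f}^{-1}$-invariant-ish (one has $H_{i+l_\mathbf{f}} \subset f_\alpha(H_i)$), so running the forward iteration $\varphi_\mathbf{f}(-n,\theta^n\omega)$ on $\partial_\pm G_{i+nl_\mathbf{f}}$ keeps the images in $H_i$, and the same monotonicity analysis (now using that $f_\alpha^{-1}$ carries $H_{i+l_\mathbf{f}}$ onto a sub-arc of $H_i$) yields $u_{-n}^i(\omega) \searrow$ and $v_{-n}^i(\omega)\nearrow$ limits in $H_{i-1}$ and $H_i$ respectively; these are $\mathcal{F}_+$-measurable, give a $\varphi_\mathbf{f}$-invariant (equivalently $\mathbf{f}^{-1}$-stationary, on $L_i$) measure, and by the uniqueness in Lemma~\ref{Li} the two one-sided limits coincide at a single point $r_i(\omega)\in H_i^\circ$; again interiority comes from non-degeneracy ruling out endpoints. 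Finally I would verify the index bookkeeping in the ``$\searrow r_{i-1}$'' versus ``$\nearrow r_i$'' labelling — that the left-endpoint pullbacks and right-endpoint pullbacks of the boundary of $G$ accumulate on the two ends of the \emph{same} gap — which is just a matter of reading off which side of which $G_j$ each boundary point sits on and tracking it through the order-preserving maps.

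I expect the main obstacle to be the identification $a_i^- = a_i^+$ (and $r_i$-analogue): proving the two monotone sequences squeeze to a single point rather than to the two ends of a non-degenerate sub-arc. The monotonicity and convergence are soft, but collapsing the limit genuinely needs the uniqueness/atomlessness input from \cite{Mal14} and Remark~\ref{ergmin} channelled through the weak-convergence description of disintegrations recalled in the Preliminaries; getting the measurability class ($\mathcal{F}_-$ vs $\mathcal{F}_+$) right so that the correct uniqueness statement applies is the delicate point. Everything else — the monotone labelling, the cocycle/invariance check by continuity, the interiority via non-degeneracy — is routine once that collapse is in hand.
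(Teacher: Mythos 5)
Your proposal follows essentially the same approach as the paper: construct $a_i^\pm$ and $r_i^\pm$ as monotone pullback limits of the boundary points of $G_i$, obtain strict monotonicity (hence interiority) from the non-degeneracy hypothesis, note that the resulting measures are $\mathcal{F}_-$- resp.\ $\mathcal{F}_+$-measurable and $\varphi_\mathbf{f}$-invariant, and collapse $a_i^-=a_i^+$ and $r_i^-=r_i^+$ via uniqueness of the $\mathbf{f}$-stationary measure on the minimal set containing $G_i$ (Remark~\ref{ergmin}) and of the $\mathbf{f}^{-1}$-stationary measure on $L_i$ (Lemma~\ref{Li}). The atomlessness/``random Dirac mass'' heuristic you invoke for the collapse step is not needed in the paper: the uniqueness of the stationary measure combined with the marginal bijection from the Preliminaries already forces the two invariant measures (and hence the disintegration points in each $G_{i+jp}$, which are distinguished by the disjointness of these arcs) to coincide almost surely.
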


\begin{proof}
For every $\omega$, we have that $u_n^i(\omega) \in G_i$ and $u_{n+1}^i(\omega) \in [u_n^i(\omega),\partial_+G_i]$ for all $n \geq 0$, and so $u_n^i(\omega)$ converges to some value $a_i^-(\omega) \in G_i$. Moreover, since no non-empty $\mathbf{f}$-invariant set is finite, we have that for $\mathbb{P}$-almost all $\omega$ there is an unbounded sequence $(m_n)$ in $\mathbb{N}$ such that $f_{\alpha_{-(m_n+1)}}(\partial_-G_{i-(m_n+1)l_\mathbf{f}}) \, \neq \, \partial_-G_{i-m_nl_\mathbf{f}}$ and so $u_{m_n+1}^i(\omega) \, \neq \, u_{m_n}^i(\omega)$. Thus $u_n^i(\omega) \nearrow a_i^-(\omega)$ (so in particular $a_i^-(\omega) \neq \partial_-G_i$) for $\mathbb{P}$-almost all $\omega$. Moreover, $f_{\alpha_0}(a_i^-(\omega))=a_{i+l_\mathbf{f}}^-(\theta\omega)$ for every $\omega$, and therefore the measure $\mu^{i,a-}$ with disintegration $\left(\frac{1}{q}\,\sum_{j=0}^{q-1} \delta_{a_{i+jp}^-(\omega)}\right)$ is $\varphi_\mathbf{f}$-invariant.
\\ \\
One can similarly find random variables
\begin{align*}
a_i^+ &\colon \Omega \to G_i \setminus \{\partial_+G_i\} \\
r_i^- &\colon \Omega \to H_i \setminus \{\partial_-H_i\} \\
r_i^+ &\colon \Omega \to H_i \setminus \{\partial_+H_i\}
\end{align*}
such that $v_n^i(\omega) \searrow a_i^+(\omega)$, $\,v_{-n}^i(\omega) \nearrow r_i^-(\omega)$, and $u_{-n}^i(\omega) \searrow r_{i-1}^+(\omega)$, and moreover the measures $\mu^{i,a+}$, $\mu^{i,r-}$ and $\mu^{i,r+}$ with disintegrations $\left(\frac{1}{q}\,\sum_{j=0}^{q-1} \delta_{a_{i+jp}^+(\omega)}\right)$, $\left(\frac{1}{q}\,\sum_{j=0}^{q-1} \delta_{r_{i+jp}^-(\omega)}\right)$ and $\left(\frac{1}{q}\,\sum_{j=0}^{q-1} \delta_{r_{i+jp}^+(\omega)}\right)$ (respectively) are $\varphi_\mathbf{f}$-invariant.
\\ \\
Now $\mu^{i,a-}$ and $\mu^{i,a+}$ are both $\mathcal{F}_-$-measurable $\varphi_\mathbf{f}$-invariant measures assigning full probability to $\Omega \times M_i$. But since $M_i$ only supports one $\mathbf{f}$-stationary measure (Remark~\ref{ergmin}), we must have that $\mu^{i,a-}=\mu^{i,a+}$, and therefore $a_i^-=a_i^+$ $\mathbb{P}$-a.s..
\\ \\
Likewise, $\mu^{i,r-}$ and $\mu^{i,r+}$ are both $\mathcal{F}_+$-measurable $\varphi_\mathbf{f}$-invariant measures assigning full probability to $\Omega \times L_i$. But since, by Lemma~\ref{Li}, $L_i$ only supports one $\mathbf{f}^{-1}$-stationary measure, we must have that $\mu^{i,r-}=\mu^{i,r+}$, and therefore $r_i^-=r_i^+$ $\mathbb{P}$-a.s..
\end{proof}

\subsection{Invariant measures of minimal random circle homeomorphisms}

\begin{lemma} \label{min inv}
Let $\mathbf{f}$ be a random circle homeomorphism such that $\mathbb{S}^1$ is $\mathbf{f}$-minimal. Then $\mathbb{S}^1$ is also $\mathbf{f}^{-1}$-minimal.
\end{lemma}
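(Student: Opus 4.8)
The plan is to argue by contradiction, so that the whole statement reduces to elementary point-set topology together with the structural facts about minimal sets already established. Suppose $\mathbb{S}^1$ is $\mathbf{f}$-minimal but \emph{not} $\mathbf{f}^{-1}$-minimal. Recall that $\mathbf{f}^{-1}=(f_\alpha^{-1})_{\alpha\in\Delta}$ is again a random circle homeomorphism. Since $\mathbb{S}^1$ is a non-empty closed $\mathbf{f}^{-1}$-invariant set that fails to be $\mathbf{f}^{-1}$-minimal, it contains a non-empty $\mathbf{f}^{-1}$-invariant closed proper subset, hence an $\mathbf{f}^{-1}$-minimal set $K$ with $K\subsetneq\mathbb{S}^1$.

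The key observation is that $\mathbf{f}^{-1}$-invariance of $K$ passes, by complementation, to $\mathbf{f}$-invariance of the closure of $\mathbb{S}^1\setminus K$. Indeed, $f_\alpha^{-1}(K)\subseteq K$ for every $\alpha$ means that $f_\alpha(z)\notin K$ whenever $z\notin K$, i.e.\ $f_\alpha(\mathbb{S}^1\setminus K)\subseteq\mathbb{S}^1\setminus K$; and then continuity of $f_\alpha$ gives $f_\alpha(\overline{\mathbb{S}^1\setminus K})\subseteq\overline{f_\alpha(\mathbb{S}^1\setminus K)}\subseteq\overline{\mathbb{S}^1\setminus K}$. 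Thus $\overline{\mathbb{S}^1\setminus K}$ is a non-empty closed $\mathbf{f}$-invariant set, so by $\mathbf{f}$-minimality of $\mathbb{S}^1$ it must equal $\mathbb{S}^1$; equivalently, $K$ has empty interior.

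It then remains to rule out the existence of a proper $\mathbf{f}^{-1}$-minimal set with empty interior. Here I would invoke Proposition~\ref{permute} applied to $\mathbf{f}^{-1}$, which tells us that $K$ has only finitely many connected components. Each such component is a non-empty closed connected subset of $\mathbb{S}^1$ with empty interior, hence a single point; so $K$ is a non-empty finite $\mathbf{f}^{-1}$-invariant set. But as observed in the proof of Proposition~\ref{permute} (valid for the random circle homeomorphism $\mathbf{f}^{-1}$, using non-degeneracy condition~(ii)), there is no non-empty finite $\mathbf{f}^{-1}$-invariant set — a contradiction. One could equivalently finish by noting that a singleton component of $K$ would be a singleton component of the union of the $\mathbf{f}^{-1}$-minimal sets, contradicting Remark~\ref{permrem}.

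The argument is almost entirely soft; the only genuinely non-elementary ingredient is that a minimal set of a random circle homeomorphism has finitely many connected components, which is exactly Proposition~\ref{permute} and is the step that actually uses the non-degeneracy hypothesis. So the ``main obstacle'' is not any hard estimate but simply making sure one is entitled to apply Proposition~\ref{permute} (and the non-existence of finite invariant sets) to $\mathbf{f}^{-1}$ in place of $\mathbf{f}$ — which is immediate, since $\mathbf{f}^{-1}$ is itself a random circle homeomorphism. (If one preferred, the measure-theoretic correspondence between $\varphi_\mathbf{f}$-invariant measures and $\mathbf{f}$- / $\mathbf{f}^{-1}$-stationary measures recalled in Section~3.1 gives an alternative route, but the direct topological argument above seems the cleanest.)
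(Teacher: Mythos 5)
Your proof is correct and follows essentially the same route as the paper's: both hinge on the observation that $\mathbf{f}^{-1}$-invariance of (the union of) the $\mathbf{f}^{-1}$-minimal sets forces $\mathbf{f}$-invariance of the closure of the complement, and both then appeal to the structural fact from Proposition~\ref{permute}/Remark~\ref{permrem} that no component of an $\mathbf{f}^{-1}$-minimal set can be a singleton. The paper's one-line version uses that fact directly to conclude $\overline{\mathbb{S}^1\setminus G'}$ is a proper $\mathbf{f}$-invariant set; your version reaches the same contradiction by first deducing empty interior from $\mathbf{f}$-minimality and then ruling out a finite invariant set — a contrapositive repackaging of the same ingredients, which you yourself note at the end.
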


\begin{proof}
If $\mathbb{S}^1$ is not $\mathbf{f}^{-1}$-minimal, then letting $G'$ be the union of the $\mathbf{f}^{-1}$-minimal sets, since no connected component $G'$ is a singleton, we have that $\overline{\mathbb{S}^1 \setminus G'}$ is an $\mathbf{f}$-invariant proper subset of $\mathbb{S}^1$.
\end{proof}

\begin{defi} \label{tm}
For any $m \in \mathbb{N}$, let $\tau_m \colon \mathbb{S}^1\to\mathbb{S}^1$ be the rotation $x \mapsto x+[\frac{1}{m}]$. Given a map $f \in \mathrm{Homeo}(\mathbb{S}^1)$ commuting with $\tau_m$, define $z_m(f) \in \mathrm{Homeo}(\mathbb{S}^1)$ by $z_m(f)(mx)=mf(x)$.
\end{defi}

\begin{rmk} \label{tmrem}
$z_m$ serves as a group homomorphism from the set of homeomorphisms commuting with $\tau_m$ to the set of all homeomorphisms of $\mathbb{S}^1$; if $m \in \{1,2\}$ then $z_m$ is surjective. However, if $m \geq 3$, then any homeomorphism commuting with $\tau_m$ must be orientation-preserving, and the image of $z_m$ is the set of all orientation-preserving homeomorphisms. For any $m$, given $f,g \in \mathrm{Homeo}(\mathbb{S}^1)$ commuting with $\tau_m$, we have that $z_m(f)=z_m(g)$ if and only if there exists $i \in \{0,\ldots,m-1\}$ such that $g = \tau_m^i \circ f$.
\end{rmk}

\noindent Now observe that if $\tau$ is a symmetry of a random circle homeomorphism $(f_\alpha)$, then for any homeomorphism $h \in \mathrm{Homeo}(\mathbb{S}^1)$, $h \circ \tau \circ h^{-1}$ is a symmetry of the random circle homeomorphism $(h \circ f_\alpha \circ h^{-1})$.

\begin{defi}
Let $\mathbf{f}=(f_\alpha)$ be a random circle homeomorphism admitting a symmetry $\tau$ of order $m$. A \emph{$\tau$-lift of $\mathbf{f}$} is a random map $\mathbf{F}$ on $\mathbb{S}^1$ of the form $\mathbf{F}=(z_m(h \circ f_\alpha \circ h^{-1}))_{\alpha \in \Delta}$ where $h \in \mathrm{Homeo}^+(\mathbb{S}^1)$ is such that $h \circ \tau \circ h^{-1}=\tau_m^i$ for some $i$.
\end{defi}

\begin{rmk}
Although the $\tau$-lift of $\mathbf{f}$ is not uniquely defined, any two $\tau$-lifts of $\mathbf{f}$ are deterministically orientationally conjugate. Also note that if $\mathbb{S}^1$ is $\mathbf{f}$-minimal then $\mathbb{S}^1$ is $\mathbf{F}$-minimal for any $\tau$-lift $\mathbf{F}$ of $\mathbf{f}$.
\end{rmk}

\begin{defi} \label{contractive}
We say that a random circle homeomorphism $\mathbf{f}$ is \emph{contractive} if $\varphi_\mathbf{f}$ has an $\mathcal{F}_-$-measurable random fixed point $a \colon \Omega \to \mathbb{S}^1$ and an $\mathcal{F}_+$-measurable random fixed point $r \colon \Omega \to \mathbb{S}^1$ such that for $\mathbb{P}$-almost all $\omega$, for every $x \in \mathbb{S}^1 \setminus \{r(\omega)\}$, $d(\varphi_\mathbf{f}(n,\omega)x,a(\theta^n\omega)) \to 0$ as $n \to \infty$. We refer to $a$ and $r$ respectively as the \emph{attractor} and the \emph{repeller} of $\mathbf{f}$.
\end{defi}

\noindent Note that the property of being contractive is preserved under deterministic topological conjugacy.

\begin{defi}
A \emph{random rotation} is a random circle homeomorphism $\mathbf{f}$ such that for every $\alpha \in \Delta$ there exists $s(\alpha) \in \mathbb{S}^1$ such that $f_\alpha(x)=x+s(\alpha)$ for all $x$.
\end{defi}

\begin{prop} \label{antonov}
Let $\mathbf{f}$ be a random circle homeomorphism, and suppose that $\mathbb{S}^1$ is $\mathbf{f}$-minimal. Then exactly one of the following statements holds:
\begin{enumerate}[\indent (a)]
\item $\mathbf{f}$ is contractive;
\item $\mathbf{f}$ admits a symmetry $\tau$ such that the $\tau$-lifts of $\mathbf{f}$ are contractive;
\item $\mathbf{f}$ is deterministically orientationally conjugate to a random rotation.
\end{enumerate}
\end{prop}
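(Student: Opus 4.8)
Here is how I would attack Proposition~\ref{antonov}.

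\medskip

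The plan is to extract the classical trichotomy for i.i.d.\ circle homeomorphisms (invariance of a probability measure versus synchronization, possibly up to a finite cyclic cover) from the results of \cite{Ant84} and \cite{Mal14}, and then to recast each alternative in the precise form demanded above; the genuinely ``random'' work is the production of an $\mathcal{F}_-$-measurable attractor and an $\mathcal{F}_+$-measurable repeller, and, in case~(b), of a single \emph{deterministic} finite-order symmetry. To set up: since $\mathbb{S}^1$ is $\mathbf{f}$-minimal, Remark~\ref{ergmin} gives a unique $\mathbf{f}$-stationary measure $\rho$, atomless with full support, and by Lemma~\ref{min inv} the same holds for $\mathbf{f}^{-1}$, with unique stationary measure $\rho'$. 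Let $\mu$ (resp.\ $\mu'$) be the $\mathcal{F}_-$-measurable (resp.\ $\mathcal{F}_+$-measurable) $\varphi_\mathbf{f}$-invariant measure with $\mathbb{S}^1$-marginal $\rho$ (resp.\ $\rho'$), so that $\mathbb{P}$-a.s.\ $\mu_\omega=\lim_n\varphi_\mathbf{f}(n,\theta^{-n}\omega)_\ast\rho$ and $\mu_\omega'=\lim_n\varphi_\mathbf{f}(-n,\theta^n\omega)_\ast\rho'$ weakly. I record for later that, by the correspondence of \cite[Section~1.7]{Arn98} and the uniqueness of the $\mathbf{f}$-ergodic measure $\rho$, there is exactly one $\mathcal{F}_-$-measurable $\varphi_\mathbf{f}$-ergodic measure; in particular any two $\mathcal{F}_-$-measurable random fixed points of $\varphi_\mathbf{f}$ agree $\mathbb{P}$-almost surely.

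\medskip

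Next I would invoke the dichotomy: either $\mathbf{f}$ preserves a Borel probability measure on $\mathbb{S}^1$ — which is then necessarily $\rho$ — or, by \cite{Ant84} and \cite{Mal14}, there is $\mathbb{P}$-a.s.\ a nonempty finite set $\mathcal{R}(\omega)\subset\mathbb{S}^1$ such that $\varphi_\mathbf{f}(n,\theta^{-n}\omega)$ contracts to a point the closure of each component of $\mathbb{S}^1\setminus\mathcal{R}(\omega)$ as $n\to\infty$. These are mutually exclusive: invariance of $\rho$ forces $\varphi_\mathbf{f}(n,\theta^{-n}\omega)_\ast\rho=\rho$, so $\mu_\omega=\rho$ is atomless, whereas in the contracting alternative $\mu_\omega$ is finitely supported (push $\rho$ forward and use that $\rho(\mathcal{R}(\omega))=0$ while the components of $\mathbb{S}^1\setminus\mathcal{R}(\omega)$ shrink). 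In the measure-preserving alternative, conjugating every $f_\alpha$ by the distribution function of the invariant measure yields orientation-preserving Lebesgue-preserving homeomorphisms, i.e.\ rotations; thus $\mathbf{f}$ is deterministically orientationally conjugate to a random rotation, which is case~(c).

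\medskip

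So assume the contracting alternative, and write $\mu_\omega=\sum_{i=0}^{m-1}c_i(\omega)\,\delta_{a_i(\omega)}$ with the atoms listed anticlockwise, each $a_i$ $\mathcal{F}_-$-measurable, and $m$ (the number of atoms, a $\theta$-invariant quantity) a.s.\ constant; note that $f_{\alpha_0}$ carries $\{a_i(\omega)\}_i$ bijectively and cyclic-order-preservingly onto $\{a_i(\theta\omega)\}_i$. If $m=1$, then $a:=a_0$ is an $\mathcal{F}_-$-measurable random fixed point; symmetrically, $\mathbf{f}^{-1}$ (not measure-preserving, hence in the contracting alternative with a single attractor) yields an $\mathcal{F}_+$-measurable random fixed point $r$ with $\mu'_\omega=\delta_{r(\omega)}$, and the contraction required by Definition~\ref{contractive} — $d(\varphi_\mathbf{f}(n,\omega)x,a(\theta^n\omega))\to0$ for all $x\neq r(\omega)$ — follows by a standard argument from $\varphi_\mathbf{f}(n,\theta^{-n}\omega)_\ast\rho\to\delta_{a(\omega)}$, minimality, and monotonicity of homeomorphisms. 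This is case~(a). If instead $m\ge2$, one should produce a symmetry $\tau$ of order $m$ with $\tau(a_i(\omega))=a_{i+1}(\omega)$ for all $i$ (whence automatically $\tau$ cyclically permutes the repeller points obtained from $\mathbf{f}^{-1}$, and $c_i\equiv\tfrac1m$): the subtlety is that the $a_i(\omega)$ and their cyclic relabelling are a priori only defined $\mathbb{P}$-a.s., while $\tau$ must be a single homeomorphism commuting with \emph{every} $f_\alpha$. I would pin it down using that any such $\tau$ preserves $\rho$ (uniqueness of the stationary measure) together with ergodicity of $\mathbb{P}$, connectedness of $\Delta$, and the $\mathbf{f}$-equivariant combinatorial structure given by the arcs of $\mathbb{S}^1\setminus\mathcal{R}(\omega)$ and their attracting endpoints. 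Granting $\tau$: any $\tau$-lift $\mathbf{F}$ is a minimal random circle homeomorphism whose forward sample measures are single Dirac masses (the $m$ attractor points of $\mathbf{f}$ descend, under the $z_m$-construction, to one point), so by the case $m=1$ already treated $\mathbf{F}$ is contractive. This is case~(b).

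\medskip

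Finally, exactly one case occurs. If $\mathbf{f}$ admitted both a symmetry $\tau\neq\mathrm{id}_{\mathbb{S}^1}$ and an $\mathcal{F}_-$-measurable random fixed point $a$ of $\varphi_\mathbf{f}$, then $\tau\circ a$ would be another such random fixed point, hence $\tau(a(\omega))=a(\omega)$ $\mathbb{P}$-a.s.\ by the uniqueness noted above; since $a$ has full-support distribution $\rho$, this would force $\tau=\mathrm{id}_{\mathbb{S}^1}$, a contradiction. So a contractive $\mathbf{f}$ has no symmetry, whence case~(a) excludes (b) and (c); and since a $\tau$-lift of a random rotation is again a random rotation, hence not contractive, (b) excludes (c). The step I expect to be the main obstacle is the case $m\ge2$: distilling from the merely almost-sure combinatorics of the finitely supported sample measures a single deterministic homeomorphism of finite order commuting with all the $f_\alpha$. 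Everything else is either classical input (\cite{Ant84}, \cite{Mal14}) or routine bookkeeping with the invariant-measure correspondence.
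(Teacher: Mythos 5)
The paper does not in fact give a proof of Proposition~\ref{antonov}: it states that this is essentially the main theorem of \cite{Ant84} (the inverse-minimality hypothesis there being automatic by Lemma~\ref{min inv}) and that the precise form of contractivity in Definition~\ref{contractive} can be extracted from \cite[Theorems~2.10 and 5.13]{newman2}. Your proposal takes a genuinely different route: rather than invoking Antonov's trichotomy as a black box, you extract only the weaker dichotomy (invariant probability measure versus synchronization onto a random finite set) from \cite{Ant84,Mal14}, and then try to reconstruct the trichotomy by splitting on $m$, the number of atoms of the sample measures $\mu_\omega$. The parts of your argument that are carried through are sound: the identification of the measure-preserving alternative with case~(c) via the distribution function of $\rho$ is correct; the mutual-exclusivity argument at the end (a symmetry $\tau$ composed with a random fixed point $a$ gives another $\mathcal{F}_-$-measurable random fixed point, forcing $\tau$ to fix the $\rho$-full-support set of values of $a$, hence $\tau=\mathrm{id}$) is clean and is close in spirit to the uniqueness bookkeeping that Lemma~\ref{sym contr}(B) later performs.

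The genuine gap is exactly the one you flag yourself: you do not construct the deterministic finite-order symmetry $\tau$ in the case $m\geq 2$. The sentence ``I would pin it down using that any such $\tau$ preserves $\rho$ \dots together with ergodicity of $\mathbb{P}$, connectedness of $\Delta$, and the $\mathbf{f}$-equivariant combinatorial structure\dots'' is a statement of intent, not an argument. This is precisely the nontrivial content of Antonov's theorem — passing from an a.s.-defined, $\mathbf{f}$-equivariant $m$-fold partition of the circle to a single homeomorphism of order $m$ commuting with every $f_\alpha$ simultaneously — and it is what the paper avoids redoing by citing \cite{Ant84} outright. A secondary, milder soft spot is the claim that the $m=1$ case already yields the pointwise contraction $d(\varphi_\mathbf{f}(n,\omega)x,a(\theta^n\omega))\to0$ for all $x\neq r(\omega)$ ``by a standard argument''; this upgrade from weak-$\ast$ collapse of pushforward measures to pointwise synchronization away from a single random point is again exactly what the paper outsources, to \cite[Theorems~2.10 and 5.13]{newman2}. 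In short, your structural outline matches what must be true, but both places where you gesture at ``standard'' or ``to be pinned down'' arguments are the places where the actual work lives and where the paper reaches for external theorems.
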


\noindent Proposition~\ref{antonov} is essentially the main result of \cite{Ant84} (with the condition of inverse-minimality being automatically fulfilled due to Lemma~\ref{min inv}), except that the notion of contractivity in \cite{Ant84} is not formulated in the same way that we do here; nonetheless, contractivity according to our formulation can be deduced using \cite[Theorems~2.10 and 5.13]{newman2}.
\\ \\
We now look at the $\varphi_\mathbf{f}$-invariant measures in each of the cases in Proposition~\ref{antonov}. For convenience, we define an \emph{$\iota$-symmetry} of a random circle homeomorphism $\mathbf{f}$ to be a map that is either the identity function on $\mathbb{S}^1$ or a symmetry of $\mathbf{f}$.

\begin{lemma} \label{sym contr}
Let $\mathbf{f}$ be a random circle homeomorphism such that $\mathbb{S}^1$ is $\mathbf{f}$-minimal. Suppose that for some $m \in \mathbb{N}$, $\tau_m$ is an $\iota$-symmetry of $\mathbf{f}$, and $\mathbf{F}:=(z_m(f_\alpha))$ is contractive, with attractor $A$ and repeller $R$. Let $a_1(\omega),\ldots,a_m(\omega) \in \mathbb{S}^1$ be the points whose $m$-th multiple is $A(\omega)$, and let $r_1(\omega),\ldots,r_m(\omega) \in \mathbb{S}^1$ be the points whose $m$-th multiple is $R(\omega)$.

(A) Suppose we have a random probability measure $(p_\omega)_{\omega \in \Omega}$ on $\mathbb{S}^1$ such that for $\mathbb{P}$-almost all $\omega$, $p_\omega(\{r_1(\omega),\ldots,r_m(\omega)\})=0$ and $\varphi_\mathbf{f}(n,\theta^{-n}\omega)_\ast p_{\theta^{-n}\omega}$ converges weakly as $n\to\infty$ to some measure $u_\omega$. Then $u_\omega = \frac{1}{m}(\delta_{a_1(\omega)} + \ldots + \delta_{a_m(\omega)})$ for $\mathbb{P}$-almost all $\omega$.

(B) Hence there are exactly two $\varphi_\mathbf{f}$-ergodic measures, namely $(\frac{1}{m}(\delta_{a_1(\omega)} + \ldots + \delta_{a_m(\omega)}))_{\omega \in \Omega}$ and $(\frac{1}{m}(\delta_{r_1(\omega)} + \ldots + \delta_{r_m(\omega)}))_{\omega \in \Omega}$.
\end{lemma}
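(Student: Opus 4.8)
The plan is to push everything down to the contractive system $\mathbf{F}$ via the $m$-fold covering $\pi_m\colon\mathbb{S}^1\to\mathbb{S}^1$, $\pi_m(x):=mx$, which by Definition~\ref{tm} satisfies $\pi_m\circ f_\alpha=F_\alpha\circ\pi_m$ for every $\alpha$, hence $\pi_m\circ\varphi_\mathbf{f}(n,\omega)=\varphi_\mathbf{F}(n,\omega)\circ\pi_m$, and which carries $\{a_1(\omega),\dots,a_m(\omega)\}$ onto $A(\omega)$ and $\{r_1(\omega),\dots,r_m(\omega)\}$ onto $R(\omega)$. For part~(A), note first that $(u_\omega)$ is automatically a $\varphi_\mathbf{f}$-invariant measure: it is a measurable a.e.\ weak limit of the measurable random measures $\omega\mapsto\varphi_\mathbf{f}(n,\theta^{-n}\omega)_\ast p_{\theta^{-n}\omega}$, and applying $f_{\alpha_0\ast}$ and relabelling $n\mapsto n+1$ gives $f_{\alpha_0\ast}u_\omega=u_{\theta\omega}$. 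It therefore suffices to prove: (I)~$\pi_{m\ast}u_\omega=\delta_{A(\omega)}$ for $\mathbb{P}$-a.e.\ $\omega$; and (II)~the only $\varphi_\mathbf{f}$-invariant measure whose disintegration is a.e.\ supported on the attractor fibre $\pi_m^{-1}(A(\omega))$ is $\bigl(\tfrac1m\sum_j\delta_{a_j(\omega)}\bigr)$. Given (I) the measure $u_\omega$ is supported on $\pi_m^{-1}(A(\omega))$, and then (II) finishes~(A); note also that (II) says in particular that $\bigl(\tfrac1m\sum_j\delta_{a_j(\omega)}\bigr)$ is the unique such invariant measure, hence ergodic, which is the form used in part~(B).

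For (I), put $q_\omega:=\pi_{m\ast}p_\omega$, so $q_\omega(\{R(\omega)\})=0$ a.e.\ and $\varphi_\mathbf{F}(n,\theta^{-n}\omega)_\ast q_{\theta^{-n}\omega}\to\pi_{m\ast}u_\omega$. Let $\rho$ be the $\mathbf{F}$-stationary measure --- unique since $\mathbb{S}^1$ is $\mathbf{F}$-minimal, atomless by Remark~\ref{ergmin}, of full support by minimality, so $\inf\{\rho(I):I\text{ an arc},\ \mathrm{diam}\,I\geq\delta\}>0$ for each $\delta>0$. Since $(\delta_{A(\omega)})$ is the $\mathcal{F}_-$-measurable $\varphi_\mathbf{F}$-invariant measure corresponding to $\rho$, \cite[Section~1.7]{Arn98} gives $\varphi_\mathbf{F}(n,\theta^{-n}\omega)_\ast\rho\to\delta_{A(\omega)}$, so for each $\varepsilon>0$ the arc $W_n^\varepsilon(\omega):=\varphi_\mathbf{F}(n,\theta^{-n}\omega)^{-1}\bigl(\{x:d(x,A(\omega))\geq\varepsilon\}\bigr)$ has $\rho$-measure, hence diameter, tending to $0$; and $W_n^\varepsilon(\omega)$ contains $R(\theta^{-n}\omega)$ but not $A(\theta^{-n}\omega)$, because iterating $F_{\alpha_0}(R(\omega))=R(\theta\omega)$ and $F_{\alpha_0}(A(\omega))=A(\theta\omega)$ shows $\varphi_\mathbf{F}(n,\theta^{-n}\omega)$ sends $R(\theta^{-n}\omega)\mapsto R(\omega)$, $A(\theta^{-n}\omega)\mapsto A(\omega)$. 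As $\varphi_\mathbf{F}(n,\theta^{-n}\omega)$ maps the complement of $W_n^\varepsilon(\omega)$ into $\{d(\cdot,A(\omega))<\varepsilon\}$, we get $\varphi_\mathbf{F}(n,\theta^{-n}\omega)_\ast q_{\theta^{-n}\omega}(\{d(\cdot,A(\omega))<\varepsilon\})\geq1-q_{\theta^{-n}\omega}(W_n^\varepsilon(\omega))$, which via Portmanteau yields $\pi_{m\ast}u_\omega(\{d(\cdot,A(\omega))\leq\varepsilon\})=1$ for all $\varepsilon$, i.e.\ (I), \emph{provided} $q_{\theta^{-n}\omega}(W_n^\varepsilon(\omega))\to0$. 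Establishing this last point --- that the assumed convergence of $\varphi_\mathbf{F}(n,\theta^{-n}\omega)_\ast q_{\theta^{-n}\omega}$ forbids any mass of $q$ from persisting near the moving repeller fibre (such mass would be spread by $\varphi_\mathbf{F}(n,\theta^{-n}\omega)$ over $\{d(\cdot,A(\omega))\geq\varepsilon\}$ in an $n$-dependent way incompatible with a limit) --- is the main obstacle; I expect the finer contractivity information behind Definition~\ref{contractive} (via \cite{newman2}), not merely its pointwise form, to be needed here.

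For (II), let $(v_\omega)$ be $\varphi_\mathbf{f}$-invariant with $v_\omega=\sum_j c_j(\omega)\delta_{a_j(\omega)}$. Since $f_{\alpha_0}$ bijects $\pi_m^{-1}(A(\omega))$ with $\pi_m^{-1}(A(\theta\omega))$ and preserves the weights along orbits, $\varphi_\mathbf{f}$-invariance forces the multiset $\{c_j(\omega)\}_j$ to be $\theta$-invariant, hence $\mathbb{P}$-a.e.\ constant. If the weights were not all equal, let $c^\ast$ be the largest and $k<m$ the number of its occurrences; then $\{(\omega,x):x\in\pi_m^{-1}(A(\omega)),\ v_\omega(\{x\})=c^\ast\}$ is $\Theta_\mathbf{f}$-invariant with $v$-mass $kc^\ast$, contradicting ergodicity of any ergodic component of $v$ living there --- \emph{unless} the remaining weights vanish, i.e.\ $v_\omega$ is uniform on a proper nonempty $\Theta_\mathbf{f}$-invariant sub-selection $S_\omega\subsetneq\pi_m^{-1}(A(\omega))$. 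This remaining case is excluded because $\mathbb{S}^1$ is $\mathbf{f}$-minimal and $\tau_m$ has order exactly $m$: a $\Theta_\mathbf{f}$-invariant proper sub-selection of the attractor fibre --- equivalently, triviality of the monodromy of this $m$-point cover on a $\langle\tau_m\rangle$-orbit of positive length --- would, after quotienting by the stabiliser in $\langle\tau_m\rangle$, produce either a deterministic proper closed $\mathbf{f}$-invariant set or a symmetry structure contradicting the above; I regard checking this monodromy non-triviality as the second delicate point. Hence all $c_j\equiv\tfrac1m$, proving (II) and so~(A).

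Part~(B) then follows formally. The two measures $\bigl(\tfrac1m\sum_j\delta_{a_j(\omega)}\bigr)$ and $\bigl(\tfrac1m\sum_j\delta_{r_j(\omega)}\bigr)$ are $\varphi_\mathbf{f}$-invariant by the bijection argument, and distinct because $A\neq R$ a.s.\ ($\{A=R\}$ is $\theta$-invariant; if a.s.\ true, $A$ would be $\mathcal{F}_-\cap\mathcal{F}_+$-measurable, hence a.e.\ constant, a deterministic $\mathbf{F}$-fixed point, contradicting $\mathbf{F}$-minimality of $\mathbb{S}^1$). Given any $\varphi_\mathbf{f}$-ergodic $\mu$, the $\theta$-invariant function $\omega\mapsto\mu_\omega(\pi_m^{-1}(R(\omega)))$ is a.e.\ a constant $c$. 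If $c=0$, apply~(A) with $p:=\mu$: $\varphi_\mathbf{f}$-invariance gives $\varphi_\mathbf{f}(n,\theta^{-n}\omega)_\ast\mu_{\theta^{-n}\omega}=\mu_\omega$ for all $n$, so $u_\omega=\mu_\omega$ and $\mu=\bigl(\tfrac1m\sum_j\delta_{a_j(\omega)}\bigr)$. If $c>0$, then since $\{(\omega,x):x\in\pi_m^{-1}(R(\omega))\}$ is $\Theta_\mathbf{f}$-invariant, ergodicity makes $\mu$ supported on the repeller fibre, and the argument of (II) applied over that fibre (using $\mathbf{f}^{-1}$-minimality of $\mathbb{S}^1$ from Lemma~\ref{min inv}) gives $\mu=\bigl(\tfrac1m\sum_j\delta_{r_j(\omega)}\bigr)$. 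So these are the only ergodic measures, and each is ergodic since, by disjointness of supports, its ergodic decomposition can only involve itself.
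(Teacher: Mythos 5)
Your architecture matches the paper's: push the problem down to the contractive system $\mathbf{F}$ via $\pi_m$, prove (I)~$\pi_{m\ast}u_\omega=\delta_{A(\omega)}$, then prove (II)~that the only $\varphi_\mathbf{f}$-invariant measure supported on the attractor fibre is $\bigl(\tfrac1m\sum_j\delta_{a_j(\omega)}\bigr)$. However, both of the ``delicate points'' you flag are genuine gaps, and the paper closes each one with a simpler, different argument that you did not find.

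For~(I), you take a roundabout route through the stationary measure $\rho$ and shrinking preimage arcs $W_n^\varepsilon(\omega)$, and then need to show $q_{\theta^{-n}\omega}(W_n^\varepsilon(\omega))\to 0$, which you correctly identify you cannot do from the information at hand (the arcs shrink in $\rho$-measure, but $q_{\theta^{-n}\omega}$ is a different, $n$-dependent measure). You speculate that finer contractivity beyond the pointwise form of Definition~\ref{contractive} is needed; that is not the case. The paper's argument uses only the pointwise contractivity statement, working forward in time rather than trying to control mass near the moving repeller. Fix continuous $g$. For a.e.\ $\omega$, $\bar p_\omega(\{R(\omega)\})=0$ and, for every $x\neq R(\omega)$, $g(\varphi_\mathbf{F}(n,\omega)x)-g(A(\theta^n\omega))\to 0$; dominated convergence then gives $\int g(\varphi_\mathbf{F}(n,\omega)x)\,\bar p_\omega(dx)-g(A(\theta^n\omega))\to 0$ for a.e.\ $\omega$. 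By $\theta$-invariance of $\mathbb{P}$ this becomes convergence \emph{in probability} after replacing $\omega$ by $\theta^{-n}\omega$, so there is a subsequence $m_n$ along which $\int g(\varphi_\mathbf{F}(m_n,\theta^{-m_n}\omega)x)\,\bar p_{\theta^{-m_n}\omega}(dx)\to g(A(\omega))$ a.e. Comparing against the assumed a.e.\ weak limit $\bar u_\omega$ (which exists along the \emph{full} sequence), one gets $\int g\,d\bar u_\omega=g(A(\omega))$ a.e.\ for countably many $g$, hence $\bar u_\omega=\delta_{A(\omega)}$. No arc geometry, no auxiliary measure.

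For~(II), your analysis of the weight multiset and the possible proper sub-selection $S_\omega\subsetneq\pi_m^{-1}(A(\omega))$ is left unresolved; the monodromy/stabiliser argument you sketch does not obviously close, and you acknowledge this. The paper avoids the issue entirely by exploiting the structural correspondence from \cite[Section~1.7]{Arn98} already recalled in Section~3.1: since $\mathbb{S}^1$ is $\mathbf{f}$-minimal, there is a unique $\mathbf{f}$-stationary (hence unique $\mathbf{f}$-ergodic) measure; the random measure $\lambda:=\bigl(\tfrac1m\sum_j\delta_{a_j(\omega)}\bigr)$ is $\mathcal{F}_-$-measurable and $\varphi_\mathbf{f}$-invariant, so by the bijection it is \emph{the} $\mathcal{F}_-$-measurable $\varphi_\mathbf{f}$-invariant measure and, corresponding to an $\mathbf{f}$-ergodic measure, it is itself $\varphi_\mathbf{f}$-ergodic. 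Since $u$ is $\varphi_\mathbf{f}$-invariant with $\bar u_\omega=\delta_{A(\omega)}$, its ergodic components live on the same fibre; the symmetry $\tau_m$ commutes with the cocycle and acts transitively on the fibre, so averaging any such component over $\langle\tau_m\rangle$ produces the ergodic measure $\lambda$, forcing that component to equal $\lambda$ (since $\lambda$ is $\tau_m$-invariant), hence $u=\lambda$. This uses nothing about monodromy and no case analysis on sub-selections.

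Your part~(B) is essentially the paper's, once (A) is in place, and your remark that $A\neq R$ a.s.\ is fine. So: right skeleton, honestly-flagged gaps, but both gaps are real, and the paper's two substituting arguments (forward DCT plus $\theta$-invariance for (I); uniqueness of the stationary measure via the Arnold correspondence for (II)) are what you were missing.
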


\begin{proof}
(A)~For any measure $\mu$ on $\mathbb{S}^1$, define the measure $\bar{\mu}$ on $\mathbb{S}^1$ by $\bar{\mu}(B):=\mu(x \in \mathbb{S}^1 : mx \in B)$. Given any continuous function $g \colon \mathbb{S}^1 \to \mathbb{R}$, we have that for $\mathbb{P}$-almost all $\omega$, $\bar{p}_\omega(\{R(\omega)\})=0$ and so (by the dominated convergence theorem),
\[ \int_{\mathbb{S}^1} g(\varphi_\mathbf{F}(n,\omega)x) \, \bar{p}_\omega(dx) \ - \ g(A(\theta^n\omega)) \ \to \ 0 \, ; \]
so there is a sequence $m_n \to \infty$ such that
\[ \int_{\mathbb{S}^1} g(\varphi_\mathbf{F}(n,\theta^{-m_n}\omega)x) \, \bar{p}_{\theta^{-m_n}\omega}(dx) \ \to \ g(A(\omega)) \hspace{3mm} \textrm{for $\mathbb{P}$-a.e.\ $\omega$.} \]
But we know that for any continuous function $g \colon \mathbb{S}^1 \to \mathbb{R}$,
\[ \int_{\mathbb{S}^1} g(\varphi_\mathbf{F}(n,\theta^{-n}\omega)x) \, \bar{p}_{\theta^{-n}\omega}(dx) \ \to \ \int_{\mathbb{S}^1} g(x) \, \bar{u}_\omega(dx) \hspace{3mm} \textrm{for $\mathbb{P}$-a.e.\ $\omega$.} \]
So $\bar{u}_\omega=\delta_{A(\omega)}$ for $\mathbb{P}$-almost all $\omega$. Hence we are done if $m=1$. Now in the case that $m \geq 2$: It is clear that $(\frac{1}{m}(\delta_{a_1(\omega)} + \ldots + \delta_{a_m(\omega)}))_{\omega \in \Omega}$ is $\varphi_\mathbf{f}$-invariant; so since there is a unique $\mathbf{f}$-ergodic measure, $(\frac{1}{m}(\delta_{a_1(\omega)} + \ldots + \delta_{a_m(\omega)}))$ is $\varphi_\mathbf{f}$-ergodic. But $(u_\omega)$ is also clearly $\varphi_\mathbf{f}$-invariant, so we must have that $u_\omega = \frac{1}{m}(\delta_{a_1(\omega)} + \ldots + \delta_{a_m(\omega)})$ almost surely. (B)~Just as we have seen that $(\frac{1}{m}(\delta_{a_1(\omega)} + \ldots + \delta_{a_m(\omega)}))$ is $\varphi_\mathbf{f}$-ergodic, so also $(\frac{1}{m}(\delta_{r_1(\omega)} + \ldots + \delta_{r_m(\omega)}))$ is $\varphi_\mathbf{f}$-ergodic (using Lemma~\ref{min inv} in the case that $m \geq 2$). Any two distinct $\varphi_\mathbf{f}$-ergodic measures are mutually singular, and so for any $\varphi_\mathbf{f}$-ergodic measure $(p_\omega)$ that is distinct from $(\frac{1}{m}(\delta_{r_1(\omega)} + \ldots + \delta_{r_m(\omega)}))$, part~(A) gives that $p_\omega=u_\omega$ for $\mathbb{P}$-almost all $\omega$.
\end{proof}

\noindent Although we will mostly consider topological conjugacy in the next section, the following corollary of Lemma~\ref{sym contr} is worth mentioning now:

\begin{cor} \label{pres attr}
Let $\mathbf{f}$ and $\mathbf{g}$ be random circle homeomorphisms such that $\mathbb{S}^1$ is $\mathbf{f}$-minimal and $\mathbf{g}$-minimal. Suppose that for some $m \in \mathbb{N}$, $\tau_m$ is an $\iota$-symmetry of both $\mathbf{f}$ and $\mathbf{g}$, and $\mathbf{F}:=(z_m(f_\alpha))$ is contractive with attractor $A_\mathbf{f}$ and repeller $R_\mathbf{f}$, and $\mathbf{G}:=(z_m(g_\alpha))$ is contractive with attractor $A_\mathbf{g}$ and repeller $R_\mathbf{g}$. Let $\mathcal{A}_\mathbf{f}(\omega)$ (resp.\ $\mathcal{A}_\mathbf{g}(\omega)$, $\mathcal{R}_\mathbf{f}(\omega)$, $\mathcal{R}_\mathbf{g}(\omega)$) be the set of points whose $m$-th multiple is $A_\mathbf{f}(\omega)$ (resp.\ $A_\mathbf{g}(\omega)$, $R_\mathbf{f}(\omega)$, $R_\mathbf{g}(\omega)$). Suppose we have a topological conjugacy $(\tilde{h}_\omega)$ from $\varphi_\mathbf{f}$ to $\varphi_\mathbf{g}$. Then for $\mathbb{P}$-almost all $\omega$,
\[ \tilde{h}_\omega(\mathcal{A}_\mathbf{f}(\omega)) \ = \ \mathcal{A}_\mathbf{g}(\omega) \hspace{4mm} \textrm{and} \; \hspace{4mm} \tilde{h}_\omega(\mathcal{R}_\mathbf{f}(\omega)) \ = \ \mathcal{R}_\mathbf{g}(\omega). \]
\end{cor}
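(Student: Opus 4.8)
The plan is to derive this from Lemma~\ref{sym contr}, applied to both $\mathbf{f}$ and $\mathbf{g}$. By part~(B) of that lemma, $\varphi_\mathbf{f}$ has exactly two ergodic measures: the ``attractor measure'' $\mu^{\mathbf{f}}_A$ with disintegration $\left(\frac{1}{m}\sum_{x \in \mathcal{A}_\mathbf{f}(\omega)}\delta_x\right)$ and the ``repeller measure'' $\mu^{\mathbf{f}}_R$ with disintegration $\left(\frac{1}{m}\sum_{x \in \mathcal{R}_\mathbf{f}(\omega)}\delta_x\right)$; likewise $\varphi_\mathbf{g}$ has exactly the two ergodic measures $\mu^{\mathbf{g}}_A$ and $\mu^{\mathbf{g}}_R$. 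Since $(\tilde h_\omega)$ is a topological conjugacy, $(\mu_\omega) \mapsto (\tilde h_{\omega\ast}\mu_\omega)$ is a bijection from the $\varphi_\mathbf{f}$-ergodic measures to the $\varphi_\mathbf{g}$-ergodic measures. So it suffices to show $\tilde h_{\omega\ast}\mu^{\mathbf{f}}_{A,\omega} = \mu^{\mathbf{g}}_{A,\omega}$ for $\mathbb{P}$-a.e.\ $\omega$: it then follows from the bijection that $\tilde h_{\omega\ast}\mu^{\mathbf{f}}_{R,\omega} = \mu^{\mathbf{g}}_{R,\omega}$ for $\mathbb{P}$-a.e.\ $\omega$, and then, since $\tilde h_\omega$ is a homeomorphism and the support of $\frac{1}{m}\sum_{x \in S}\delta_x$ is exactly $S$, taking supports gives $\tilde h_\omega(\mathcal{A}_\mathbf{f}(\omega)) = \mathcal{A}_\mathbf{g}(\omega)$ and $\tilde h_\omega(\mathcal{R}_\mathbf{f}(\omega)) = \mathcal{R}_\mathbf{g}(\omega)$ for $\mathbb{P}$-a.e.\ $\omega$.

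To prove $\tilde h_{\omega\ast}\mu^{\mathbf{f}}_{A,\omega} = \mu^{\mathbf{g}}_{A,\omega}$, the idea is to push forward under the conjugacy not the random measure $\mu^{\mathbf{f}}_A$ but its \emph{deterministic} $\mathbb{S}^1$-marginal $\rho$ (equivalently, the unique $\mathbf{f}$-stationary measure). Since $A_\mathbf{f}$ is $\mathcal{F}_-$-measurable (Definition~\ref{contractive}), the measure $\mu^{\mathbf{f}}_A$ is $\mathcal{F}_-$-measurable, so by the pullback characterisation of the stationary-measure correspondence, $\varphi_\mathbf{f}(n,\theta^{-n}\omega)_\ast\rho \to \mu^{\mathbf{f}}_{A,\omega}$ weakly for $\mathbb{P}$-a.e.\ $\omega$; and $\rho$ is atomless by Remark~\ref{ergmin}. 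Now set $p_\omega := \tilde h_{\omega\ast}\rho$. Being the pushforward of an atomless measure under a homeomorphism, each $p_\omega$ is atomless, so $p_\omega(\mathcal{R}_\mathbf{g}(\omega)) = 0$ for every $\omega$. Moreover, iterating the conjugacy equation~\eqref{conjugation} gives $\varphi_\mathbf{g}(n,\theta^{-n}\omega) = \tilde h_\omega \circ \varphi_\mathbf{f}(n,\theta^{-n}\omega) \circ \tilde h_{\theta^{-n}\omega}^{-1}$, whence
\[ \varphi_\mathbf{g}(n,\theta^{-n}\omega)_\ast\, p_{\theta^{-n}\omega} \ = \ \tilde h_{\omega\ast}\bigl(\varphi_\mathbf{f}(n,\theta^{-n}\omega)_\ast\rho\bigr) \ \longrightarrow \ \tilde h_{\omega\ast}\mu^{\mathbf{f}}_{A,\omega} \]
weakly for $\mathbb{P}$-a.e.\ $\omega$ (using weak-continuity of pushforward under the fixed homeomorphism $\tilde h_\omega$). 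Thus $(p_\omega)$ satisfies both hypotheses of Lemma~\ref{sym contr}(A) for the system $\mathbf{g}$ (which is legitimate since $\mathbb{S}^1$ is $\mathbf{g}$-minimal, $\tau_m$ is an $\iota$-symmetry of $\mathbf{g}$, and $\mathbf{G}=(z_m(g_\alpha))$ is contractive), and its conclusion yields $\tilde h_{\omega\ast}\mu^{\mathbf{f}}_{A,\omega} = \mu^{\mathbf{g}}_{A,\omega}$ for $\mathbb{P}$-a.e.\ $\omega$.

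The only other points to check are routine: that $(p_\omega)$ is a genuine random probability measure, which follows from measurability of $\omega \mapsto \tilde h_\omega(x)$ together with atomlessness of $\rho$; and the book-keeping identifying $\mathcal{A}_\mathbf{g}(\omega)$, $\mathcal{R}_\mathbf{g}(\omega)$ with the point sets appearing in Lemma~\ref{sym contr}. The place I expect to need care is the temptation to argue \emph{pointwise} with the $\tilde h_\omega$ (``a point attracted to $\mathcal{A}_\mathbf{f}$ should map to a point attracted to $\mathcal{A}_\mathbf{g}$''): this would require an equicontinuity of the family $(\tilde h_\omega)_{\omega \in \Omega}$ that is simply not available, and it is precisely to avoid this that one transports the deterministic, atomless marginal $\rho$ rather than the fibrewise measures — atomlessness makes the ``avoids the repeller'' hypothesis of Lemma~\ref{sym contr}(A) automatic, so no pointwise control of the conjugacy is needed.
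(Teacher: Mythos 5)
Your proposal is correct and follows essentially the same route as the paper's proof: push forward the atomless stationary measure $\rho_\mathbf{f}$ by $\tilde h_\omega$, verify the hypotheses of Lemma~\ref{sym contr}(A) for $\mathbf{g}$ via the conjugacy relation and atomlessness, and then recover the repeller statement from Lemma~\ref{sym contr}(B) together with the fact that a conjugacy carries ergodic measures to ergodic measures. You spell out in more detail why $\varphi_\mathbf{f}(n,\theta^{-n}\omega)_\ast\rho_\mathbf{f}$ converges to the attractor measure (via $\mathcal{F}_-$-measurability of $A_\mathbf{f}$ and the stationary-measure correspondence), a step the paper takes tacitly, but the structure of the argument is the same.
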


\begin{proof}
Given a finite set $P \subset \mathbb{S}^1$, write $\lambda_P$ for the probability measure supported uniformly on $P$. Let $\rho_\mathbf{f}$ be the unique $\mathbf{f}$-stationary measure, and let $p_\omega:=\tilde{h}_{\omega\ast}\rho_\mathbf{f}$. For $\mathbb{P}$-almost every $\omega$, we have that
\[ \varphi_\mathbf{g}(n,\theta^{-n}\omega)_\ast p_{\theta^{-n}\omega} \, = \, \tilde{h}_{\omega\ast}(\varphi_\mathbf{f}(n,\theta^{-n}\omega)_\ast\rho_\mathbf{f}) \ \, \to \ \, \tilde{h}_{\omega\ast}\lambda_{\mathcal{A}_\mathbf{f}(\omega)}. \]
\noindent Now $p_\omega(\mathcal{R}_\mathbf{f}(\omega))=0$ for all $\omega$, since $\rho_\mathbf{f}$ is atomless; so then, applying Lemma~\ref{sym contr}(A) to $\mathbf{g}$ gives that $\tilde{h}_{\omega\ast}\lambda_{\mathcal{A}_\mathbf{f}(\omega)}=\lambda_{\mathcal{A}_\mathbf{g}(\omega)}$ for $\mathbb{P}$-almost all $\omega$. Since topological conjugacy preserves ergodic measures, due to Lemma~\ref{sym contr}(B) we also have that $\tilde{h}_{\omega\ast}\lambda_{\mathcal{R}_\mathbf{f}(\omega)}=\lambda_{\mathcal{R}_\mathbf{g}(\omega)}$ for $\mathbb{P}$-almost all $\omega$.
\end{proof}

\noindent In the following lemma, we use the ``partially-strict monotone convergence'' notations $x_n \searrow x$ and $x_n \nearrow x$ introduced immediately before Lemma~\ref{pull}.

\begin{lemma} \label{pull2}
Let $\mathbf{f}$ be a random circle homeomorphism such that $\mathbb{S}^1$ is $\mathbf{f}$-minimal, and suppose that $\mathbf{f}$ is contractive, with attractor $a_0$ and repeller $r_0$. There exist measurable functions $u,v \colon \Omega \to \mathbb{S}^1$ with $u(\omega) \in \;]r_0(\omega),a_0(\omega)[$ and $v(\omega) \in \;]a_0(\omega),r_0(\omega)[$ almost surely, such that for $\mathbb{P}$-almost every $\omega$,
\[ \hspace{-1mm} \begin{array}{l l l}
u_n(\omega)\,:=\,\varphi_\mathbf{f}(n,\theta^{-n}\omega)u(\theta^{-n}\omega) \nearrow a_0(\omega), & & u_{-n}(\omega)\,:=\,\varphi_\mathbf{f}(-n,\theta^n\omega)u(\theta^n\omega) \searrow r_0(\omega), \\
\hspace{0.3mm}v_n(\omega)\,:=\,\varphi_\mathbf{f}(n,\theta^{-n}\omega)v(\theta^{-n}\omega) \searrow a_0(\omega), & & \hspace{0.3mm}v_{-n}(\omega)\,:=\,\varphi_\mathbf{f}(-n,\theta^n\omega)v(\theta^n\omega) \nearrow r_0(\omega).
\end{array}\]
\end{lemma}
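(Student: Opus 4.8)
Throughout put $c(\omega):=d_+(r_0(\omega),a_0(\omega))\in(0,1)$, and for $t\in(0,c(\omega))$ let $p_\omega(t)$ be the point of $\,]r_0(\omega),a_0(\omega)[\,$ with $d_+(p_\omega(t),a_0(\omega))=t$; in particular $p_\omega(\tfrac12c(\omega))=m(\omega)$, the midpoint of $[r_0(\omega),a_0(\omega)]$. The plan is to reduce the statement to constructing two measurable ``seeds'' $u,v$ obeying a single one-step estimate, and then to identify the four monotone limits using only the list of $\varphi_{\mathbf{f}}$-ergodic measures.

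\emph{Reduction.} It suffices to find measurable $u,v\colon\Omega\to\mathbb{S}^1$ with $u(\omega)\in\,]r_0(\omega),a_0(\omega)[$ and $v(\omega)\in\,]a_0(\omega),r_0(\omega)[$ such that, for $\mathbb{P}$-a.e.\ $\omega$,
\begin{equation*}
f_{\alpha_0}(u(\omega))\in[\,u(\theta\omega),a_0(\theta\omega)\,[\,,\qquad f_{\alpha_0}(v(\omega))\in\,]\,a_0(\theta\omega),v(\theta\omega)\,]\,.
\end{equation*}
Granting this for $u$: since $f_{\alpha_0}$ is orientation-preserving and $f_{\alpha_0}(a_0(\omega))=a_0(\theta\omega)$, $f_{\alpha_0}(r_0(\omega))=r_0(\theta\omega)$, applying the first estimate at $\theta^{-(n+1)}\omega$ and then the homeomorphism $\varphi_{\mathbf{f}}(n,\theta^{-n}\omega)$ gives $u_{n+1}(\omega)\in[\,u_n(\omega),a_0(\omega)\,[$, and a similar computation with $\varphi_{\mathbf{f}}(-n,\theta^{n}\omega)$ gives $u_{-(n+1)}(\omega)\in\,]\,r_0(\omega),u_{-n}(\omega)\,]$. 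Hence $u_n(\omega)$ converges to some $w_+(\omega)\in[\,u(\omega),a_0(\omega)\,]$ and $u_{-n}(\omega)$ to some $w_-(\omega)\in[\,r_0(\omega),u(\omega)\,]$; from the cocycle identities $f_{\alpha_0}(u_n(\omega))=u_{n+1}(\theta\omega)$ and $f_{\alpha_0}(u_{-(n+1)}(\omega))=u_{-n}(\theta\omega)$, letting $n\to\infty$ shows $w_+,w_-$ are random fixed points of $\varphi_{\mathbf{f}}$, so $(\delta_{w_+(\omega)})_{\omega}$ and $(\delta_{w_-(\omega)})_{\omega}$ are $\varphi_{\mathbf{f}}$-ergodic. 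By Lemma~\ref{sym contr}(B) with $m=1$ (so $\mathbf{F}=\mathbf{f}$, $A=a_0$, $R=r_0$) these can only be $(\delta_{a_0(\omega)})_{\omega}$ or $(\delta_{r_0(\omega)})_{\omega}$; since $[\,u(\omega),a_0(\omega)\,]$ misses $r_0(\omega)$ and $[\,r_0(\omega),u(\omega)\,]$ misses $a_0(\omega)$, necessarily $w_+=a_0$ and $w_-=r_0$ a.s., i.e.\ $u_n(\omega)\nearrow a_0(\omega)$ and $u_{-n}(\omega)\searrow r_0(\omega)$. The statements about $v$ follow the same way with ``$\nearrow$'' and ``$\searrow$'' swapped.

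\emph{Constructing $u$.} For $t\in(0,c(\omega))$ set $L_\omega(t):=d_+\big(f_{\alpha_0}(p_\omega(t)),a_0(\theta\omega)\big)$; this is an increasing homeomorphism $(0,c(\omega))\to(0,c(\theta\omega))$, and with $L^{(n)}_\omega:=L_{\theta^{n-1}\omega}\circ\cdots\circ L_\omega$ one checks $L^{(n)}_\omega(t)=d_+\big(\varphi_{\mathbf{f}}(n,\omega)(p_\omega(t)),a_0(\theta^n\omega)\big)$ and $L_\omega\circ L^{(n)}_{\theta^{-n}\omega}=L^{(n+1)}_{\theta^{-n}\omega}$. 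Taking $u(\omega):=p_\omega(\epsilon(\omega))$, the first estimate above is exactly $L_\omega(\epsilon(\omega))\le\epsilon(\theta\omega)$. I take
\begin{equation*}
\epsilon(\omega):=\sup_{n\ge0}d_+\big(\varphi_{\mathbf{f}}(n,\theta^{-n}\omega)(m(\theta^{-n}\omega)),a_0(\omega)\big)=\sup_{n\ge0}L^{(n)}_{\theta^{-n}\omega}\!\big(\tfrac12c(\theta^{-n}\omega)\big),
\end{equation*}
which is measurable, and whose $n=0$ term forces $\epsilon(\omega)\ge\tfrac12c(\omega)>0$. Granting $\epsilon(\omega)<c(\omega)$ (see below), the monotonicity and continuity of $L_\omega$ let it pass through the supremum, so $L_\omega(\epsilon(\omega))=\sup_{n\ge0}L^{(n+1)}_{\theta^{-n}\omega}(\tfrac12c(\theta^{-n}\omega))=\sup_{m\ge1}L^{(m)}_{\theta^{-m}(\theta\omega)}(\tfrac12c(\theta^{-m}(\theta\omega)))\le\epsilon(\theta\omega)$, as required. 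The seed $v$ is produced identically after replacing $[r_0(\eta),a_0(\eta)]$ by the complementary arc $[a_0(\eta),r_0(\eta)]$ throughout.

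\emph{The main obstacle.} What remains, and what I expect to be the technical heart, is $\epsilon(\omega)<c(\omega)$ for $\mathbb{P}$-a.e.\ $\omega$ — equivalently $d_+(\varphi_{\mathbf{f}}(n,\theta^{-n}\omega)(m(\theta^{-n}\omega)),a_0(\omega))\to0$, i.e.\ that the $\varphi_{\mathbf{f}}(n,\theta^{-n}\omega)$-image of the half of $[r_0(\theta^{-n}\omega),a_0(\theta^{-n}\omega)]$ adjacent to $a_0(\theta^{-n}\omega)$ shrinks onto $a_0(\omega)$. This is the only point at which contractivity of $\mathbf{f}$ is used beyond the enumeration of its ergodic measures; it is a ``pull-back'' form of diameter contraction for a compact arc avoiding the \emph{moving} repelling point $r_0(\theta^{-n}\omega)$, which I would deduce from the exponential-rate contraction underlying Proposition~\ref{antonov} (\cite{Ant84,Mal14}; cf.\ \cite{newman2}): the radius of the ``window'' near $r_0(\theta^{-n}\omega)$ on which $\varphi_{\mathbf{f}}(n,\theta^{-n}\omega)$ fails to synchronise decays exponentially in $n$, whereas $c(\theta^{-n}\omega)$ — hence the distance from $m(\theta^{-n}\omega)$ to $r_0(\theta^{-n}\omega)$ — decays at most subexponentially along $\mathbb{P}$-a.e.\ orbit, so $m(\theta^{-n}\omega)$ eventually leaves that window and is carried into an exponentially small neighbourhood of $a_0(\omega)$.
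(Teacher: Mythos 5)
Your reduction step is sound and runs parallel to the paper's: once measurable seeds $u,v$ with the one-step cocycle inclusions are produced, the four monotone convergences follow by iterating and then identifying the two possible random fixed-point limits via Lemma~\ref{sym contr}(B) with $m=1$, exactly as you argue. The genuinely different part is the construction of $u$ through the running supremum $\epsilon(\omega)=\sup_{n\ge 0}d_+\big(\varphi_\mathbf{f}(n,\theta^{-n}\omega)(m(\theta^{-n}\omega)),a_0(\omega)\big)$, and here there is a real gap. You correctly single out $\epsilon(\omega)<c(\omega)$ a.s.\ as the crux (without it, $p_\omega(\epsilon(\omega))$ is undefined and $L_\omega$ cannot be passed through the supremum), but you do not prove it, and the heuristic offered does not follow from the hypotheses in play. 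Specifically: (i)~the paper's Definition~\ref{contractive} of ``contractive'' gives only the qualitative statement $d(\varphi_\mathbf{f}(n,\omega)x,a(\theta^n\omega))\to 0$ for each \emph{fixed} $x\ne r(\omega)$ — no exponential rate for the ``failure window'' near $r_0(\theta^{-n}\omega)$ is available at this point in the paper; (ii)~the assertion that $c(\theta^{-n}\omega)$ decays at most subexponentially along a.e.\ backward orbit needs, via Borel--Cantelli, something like integrability of $\log c$, which is not assumed and does not follow from stationarity alone; and (iii)~the parenthetical ``equivalently'' is inaccurate — $d_+(\cdot)\to 0$ does imply $\sup<c$, but not conversely. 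Taken together, your construction requires a pullback contraction estimate for a \emph{moving} sequence of points $m(\theta^{-n}\omega)$, which is strictly more than the forward contraction of a fixed point that the hypothesis provides.

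The paper sidesteps this entirely by defining $u(\omega)$ not via a supremum of pullback distances of moving midpoints, but as the endpoint of the nested intersection $\bigcap_{n\ge 0}\varphi_\mathbf{f}(-n,\theta^n\omega)[a_0(\theta^n\omega)-[\varepsilon],a_0(\theta^n\omega)]=[u(\omega),a_0(\omega)]$ for a fixed $\varepsilon>0$ with $\mathbb{P}(d_+(r_0,a_0)>\varepsilon)>0$. Because the intervals are nested, the one-step inclusion $\varphi_\mathbf{f}(1,\omega)[u(\omega),a_0(\omega)]\subset[u(\theta\omega),a_0(\theta\omega)]$ is immediate from the definition, and $u(\omega)\ne r_0(\omega)$ holds as soon as some $\theta^n\omega$ satisfies $d_+(r_0,a_0)>\varepsilon$. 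The only substantive step is then $u\ne a_0$ a.s., which follows from forward contraction of a single fixed point $c_\omega\in\;]r_0(\omega),a_0(\omega)[$ — precisely what Definition~\ref{contractive} gives — together with $\theta$-invariance of $\mathbb{P}$. If you wish to keep your skeleton, the cleanest repair is to replace the moving midpoint $m(\theta^{-n}\omega)$ with a \emph{fixed} reference object (a fixed-size arc against $a_0$), which is exactly the device the paper uses.
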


\begin{proof}
We will prove the existence of $u$; the existence of $v$ is proved similarly. Let $0<\varepsilon<1$ be such that $\mathbb{P}(\omega: d_+(r_0(\omega),a_0(\omega))>\varepsilon)>0$. For $\mathbb{P}$-almost every $\omega$, let $u(\omega) \in \mathbb{S}^1$ be such that
\[ \bigcap_{n=0}^\infty \varphi_\mathbf{f}(-n,\theta^n\omega)[a_0(\theta^n\omega)-[\varepsilon],a_0(\theta^n\omega)] \ = \ [u(\omega),a_0(\omega)]. \]
For $\mathbb{P}$-almost all $\omega$, there exists $n \geq 0$ such that $d_+(r_0(\theta^n\omega),a_0(\theta^n\omega))>\varepsilon$, and so $u(\omega) \in \;]r_0(\omega),a_0(\omega)]$. Note that
\[ \varphi_\mathbf{f}(1,\omega)[u(\omega),a_0(\omega)] \ \subset \ [u(\theta\omega),a_0(\theta\omega)] \, ; \]
so for any $n \geq 0$, $\varphi_\mathbf{f}(1,\theta^{-(n+1)}\omega)u(\theta^{-(n+1)}\omega) \in [u(\theta^{-n}\omega),a_0(\theta^{-n}\omega)]$ and so $u_{n+1}(\omega) \in [u_n(\omega),a_0(\omega)]$. So then, as $n \to \infty$, $u_n(\omega)$ converges to a value $b(\omega) \in \;]r_0(\omega),a_0(\omega)]$; but then $b(\cdot)$ is a random fixed point distinct from $r_0$, and so (by Lemma~\ref{sym contr}(B)), $b=a_0$ $\mathbb{P}$-a.s.. Now for $\mathbb{P}$-almost all $\omega$, letting $c_\omega$ be a point in $]r_0(\omega),a_0(\omega)[$, we have that $d_+(\varphi_\mathbf{f}(n,\omega)c_\omega,a_0(\theta^n\omega)) \leq \varepsilon$ for sufficiently large $n$, and therefore $u(\theta^n\omega) \neq a_0(\theta^n\omega)$ for sufficiently large $n$. Since $\mathbb{P}$ is $\theta$-invariant, it follows that $u(\omega) \neq a_0(\omega)$ for $\mathbb{P}$-almost all $\omega$, and therefore $u_n(\omega) \neq a(\omega)$ for $\mathbb{P}$-almost all $\omega$.
\\ \\
Thus, overall, we have that $u_n \nearrow a_0$ $\mathbb{P}$-a.s.. Now since $\varphi_\mathbf{f}(1,\omega)u(\omega) \in [u(\theta\omega),a_0(\theta\omega)]$ and $u(\theta\omega) \in \;]r_0(\theta\omega),a_0(\theta\omega)[$ for $\mathbb{P}$-almost all $\omega$, it follows that $u(\theta\omega) \in \;]r_0(\theta\omega),\varphi_\mathbf{f}(1,\omega)u(\omega)]$ and therefore $u_{-1}(\omega) \in \;]r_0(\omega),u(\omega)]$ for $\mathbb{P}$-almost all $\omega$. As above, it follows that $u_{-(n+1)}(\omega) \in \;]r_0(\omega),u_{-n}(\omega)]$ for all $n \geq 0$, $\mathbb{P}$-almost surely; so, since $u(\omega) \in \;]r_0(\omega),a_0(\omega)[$, we have (again using Lemma~\ref{sym contr}(B)) that $u_{-n} \searrow r_0$ $\mathbb{P}$-a.s..
\end{proof}

\noindent It remains to consider invariant measures for random rotations. We start with the following known result:

\begin{prop} \label{quas}
Let $(X,\circ)$ be a compact metrisable abelian topological group, with $\lambda$ the Haar probability measure. Let $r \colon \Omega \to X$ be a measurable function, and define $\Theta \colon \Omega \times X \to \Omega \times X$ by $\Theta(\omega,x)=(\theta\omega,x+r(\omega))$. Suppose that $\mathbb{P} \otimes \lambda$ is ergodic with respect to $\Theta$. Then $\mathbb{P} \otimes \lambda$ is the \emph{only} $\Theta$-invariant measure with $\Omega$-marginal $\mathbb{P}$.
\end{prop}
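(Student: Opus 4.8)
The plan is to run the standard harmonic-analysis argument for skew products over compact abelian groups. Let $\mu$ be any $\Theta$-invariant measure with $\Omega$-marginal $\mathbb{P}$, and let $(\mu_\omega)_{\omega\in\Omega}$ be its disintegration. Unwinding $\Theta_\ast\mu=\mu$ exactly as in the characterisation of $\varphi_\mathbf{f}$-invariant measures, $\Theta$-invariance of $\mu$ is equivalent to $\mu_{\theta\omega}=(x\mapsto x+r(\omega))_\ast\mu_\omega$ for $\mathbb{P}$-almost all $\omega$. Since $X$ is compact and metrisable, its Pontryagin dual $\widehat{X}$ is a countable discrete group, and I will track, for each character $\chi\in\widehat{X}$, the Fourier coefficient $c_\omega(\chi):=\int_X\chi(x)\,\mu_\omega(dx)$, a measurable function of $\omega$ with $|c_\omega(\chi)|\le 1$. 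The translation relation above immediately gives
\[ c_{\theta\omega}(\chi)\;=\;\chi(r(\omega))\,c_\omega(\chi)\qquad\text{for }\mathbb{P}\text{-a.e.\ }\omega . \]

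First I would note that $|c_{\theta\omega}(\chi)|=|c_\omega(\chi)|$ $\mathbb{P}$-a.s., so $\omega\mapsto|c_\omega(\chi)|$ is $\theta$-invariant; since ergodicity of $\mathbb{P}\otimes\lambda$ under $\Theta$ forces $\mathbb{P}$ to be ergodic under $\theta$ (a $\theta$-invariant set $A$ of intermediate measure would give a $\Theta$-invariant set $A\times X$ of intermediate measure), this modulus equals a constant $\gamma_\chi\in[0,1]$ $\mathbb{P}$-a.s. For the trivial character $\gamma_\chi=1$; the crux is to show $\gamma_\chi=0$ for every non-trivial $\chi$. Suppose not, say $\gamma_\chi>0$ for some non-trivial $\chi$. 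Then $c_\omega(\chi)\neq 0$ $\mathbb{P}$-a.s., so $g_\omega:=c_\omega(\chi)/|c_\omega(\chi)|$ defines a measurable map $\Omega\to\{z\in\mathbb{C}:|z|=1\}$ with $g_{\theta\omega}=\chi(r(\omega))\,g_\omega$ a.s. A direct computation then shows the bounded function $F\colon\Omega\times X\to\mathbb{C}$, $F(\omega,x):=\overline{g_\omega}\,\chi(x)$, is $\Theta$-invariant $(\mathbb{P}\otimes\lambda)$-a.e., whereas $\int F\,d(\mathbb{P}\otimes\lambda)=0$ because $\int_X\chi\,d\lambda=0$ for non-trivial $\chi$; since $|F|\equiv 1$, $F$ cannot be a.e.\ constant, contradicting ergodicity of $\mathbb{P}\otimes\lambda$. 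Hence $c_\omega(\chi)=0$ for $\mathbb{P}$-a.e.\ $\omega$, for each non-trivial $\chi$.

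Finally, since $\widehat{X}$ is countable I would intersect the countably many full-measure sets to obtain a single set of full $\mathbb{P}$-measure on which $c_\omega(\chi)=\int_X\chi\,d\lambda$ for \emph{all} $\chi\in\widehat{X}$ simultaneously; by the fact that a probability measure on a compact abelian group is determined by its Fourier coefficients (Peter--Weyl/Stone--Weierstrass), this yields $\mu_\omega=\lambda$ for $\mathbb{P}$-a.e.\ $\omega$, i.e.\ $\mu=\mathbb{P}\otimes\lambda$. The only mildly delicate points are the measurable selection of the phase $g_\omega$ (routine, being an explicit formula in $c_\omega(\chi)$) and the passage from ``for each $\chi$, a.s.'' to ``a.s., for all $\chi$'', which is exactly where metrisability of $X$ — equivalently countability of $\widehat{X}$ — enters. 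The genuine content, and the step I expect to be the main obstacle, is the construction of the non-constant $\Theta$-invariant function $F$, since that is where the ergodicity hypothesis is actually used.
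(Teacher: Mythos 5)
Your proof is correct. Note, though, that the paper does not supply its own proof of this proposition: it refers the reader to an answer of Anthony Quas on MathOverflow \cite{211302} for a ``simple proof'', with Furstenberg \cite[Prop.~3.10]{MR603625} mentioned as a more complicated alternative. The Quas argument is a convolution/extremality trick: for $a\in X$ the map $R_a(\omega,x)=(\omega,x+a)$ commutes with $\Theta$, so each $R_{a\ast}\mu$ is $\Theta$-invariant; averaging, $\int_X R_{a\ast}\mu\,\lambda(da)$ has fibre measures $\mu_\omega\ast\lambda=\lambda$ and so equals $\mathbb{P}\otimes\lambda$; ergodicity of $\mathbb{P}\otimes\lambda$ makes it an extreme point of the $\Theta$-invariant measures, which forces $R_{a\ast}\mu=\mathbb{P}\otimes\lambda$ for $\lambda$-a.e.\ $a$, and hence $\mu=\mathbb{P}\otimes\lambda$. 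Your argument is the classical Fourier/character route (Anzai--Furstenberg style) and is logically independent of Quas's. Both are valid: Quas's is shorter and does not invoke Pontryagin duality, while yours is more explicit about why the fibres must be Haar, and, like Quas's, avoids any need for $(\Omega,\mathcal{F})$ to be a standard Borel space (the disintegration you use only requires $X$ to be Polish, not $\Omega$). Your identification of the crux --- manufacturing the non-constant, bounded, $\Theta$-invariant function $F(\omega,x)=\overline{g_\omega}\,\chi(x)$ out of the unimodular phase of a nonvanishing Fourier coefficient --- is exactly right, and the remaining steps (measurable selection of the phase, intersecting the countably many full-measure events indexed by $\widehat{X}$, and concluding $\mu_\omega=\lambda$ from the vanishing of all non-trivial Fourier coefficients) are all handled properly.
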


\noindent The above result does not rely on any structure of the dynamical system $(\Omega,\mathcal{F},\mathbb{P},\theta)$ other than that $\mathbb{P}$ is an ergodic measure of $\theta$. A simple proof due to Anthony Quas can be found at \cite{211302}.\footnote{Other proofs such as in \cite[Proposition~3.10]{MR603625} are more complicated, and rely on $(\Omega,\mathcal{F})$ being a standard Borel space.} With Proposition~\ref{quas}, we immediately have the following:

\begin{lemma} \label{rot inv}
Let $\mathbf{f}$ be a random circle homeomorphism such that $\mathbb{S}^1$ is $\mathbf{f}$-minimal, and suppose that $\mathbf{f}$ is deterministically orientationally conjugate to a random rotation. Let $\rho_\mathbf{f}$ be the unique $\mathbf{f}$-stationary measure. Then $\mathbb{P} \otimes \rho_\mathbf{f}$ is the only $\varphi_\mathbf{f}$-invariant measure.
\end{lemma}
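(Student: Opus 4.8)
The plan is to transport the problem across the deterministic conjugacy to a random rotation, identify Lebesgue measure as the relevant object on the circle, and then invoke Proposition~\ref{quas}. Concretely, fix $h \in \mathrm{Homeo}^+(\mathbb{S}^1)$ and a random rotation $\mathbf{g}=(g_\alpha)$, $g_\alpha(x)=x+s(\alpha)$, with $g_\alpha = h \circ f_\alpha \circ h^{-1}$ for all $\alpha$; let $\lambda$ denote Lebesgue ($=$ Haar) measure on $\mathbb{S}^1=\mathbb{R}/\mathbb{Z}$, and define $H \colon \Omega \times \mathbb{S}^1 \to \Omega \times \mathbb{S}^1$ by $H(\omega,x)=(\omega,h^{-1}(x))$. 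Since $g_{\alpha_0} = h \circ f_{\alpha_0} \circ h^{-1}$, a direct check gives $H \circ \Theta_\mathbf{g} = \Theta_\mathbf{f} \circ H$, so $H_\ast$ is a bijection between the $\Theta_\mathbf{g}$-invariant measures with $\Omega$-marginal $\mathbb{P}$ and the $\Theta_\mathbf{f}$-invariant measures with $\Omega$-marginal $\mathbb{P}$, i.e.\ between the $\varphi_\mathbf{g}$-invariant and the $\varphi_\mathbf{f}$-invariant measures. Moreover $H_\ast(\mathbb{P}\otimes\lambda) = \mathbb{P}\otimes(h^{-1}_\ast\lambda)$, and $h^{-1}_\ast\lambda$ is $\mathbf{f}$-stationary (because the pushforward of $\nu\otimes\lambda$ under $(\alpha,x)\mapsto g_\alpha(x)$ is $\lambda$, as each $g_\alpha$ preserves $\lambda$); since $\mathbb{S}^1$ is $\mathbf{f}$-minimal, Remark~\ref{ergmin} gives that there is a unique $\mathbf{f}$-stationary measure $\rho_\mathbf{f}$, so $h^{-1}_\ast\lambda = \rho_\mathbf{f}$. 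Hence it suffices to prove that $\mathbb{P}\otimes\lambda$ is the \emph{only} $\varphi_\mathbf{g}$-invariant measure.

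For that, I would first observe that $\mathbb{S}^1$ is $\mathbf{g}$-minimal (minimality passes across $h$: if $K$ were a proper nonempty $\mathbf{g}$-invariant closed set, $h^{-1}(K)$ would be a proper nonempty $\mathbf{f}$-invariant closed set) and that $\lambda$, being preserved by every $g_\alpha$, is $\mathbf{g}$-stationary; by Remark~\ref{ergmin} it is then the unique $\mathbf{g}$-stationary measure, hence $\mathbf{g}$-ergodic. By the bijection between $\mathcal{F}_-$-measurable $\varphi_\mathbf{g}$-invariant (resp.\ ergodic) measures and $\mathbf{g}$-stationary (resp.\ ergodic) measures recalled in the preliminaries (\cite{Arn98}), the $\mathcal{F}_-$-measurable $\varphi_\mathbf{g}$-invariant measure with $\mathbb{S}^1$-marginal $\lambda$ is $\varphi_\mathbf{g}$-ergodic; and by the explicit form of the inverse map, this measure has disintegration $\omega \mapsto \lim_{n\to\infty}\varphi_\mathbf{g}(n,\theta^{-n}\omega)_\ast\lambda = \lambda$, since every $\varphi_\mathbf{g}(n,\theta^{-n}\omega)$ is a rotation and so preserves $\lambda$. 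Thus $\mathbb{P}\otimes\lambda$ is $\Theta_\mathbf{g}$-ergodic. Finally, writing $\omega=(\alpha_n)_{n\in\mathbb{Z}}$ and $r(\omega):=s(\alpha_0)$, we have $\Theta_\mathbf{g}(\omega,x)=(\theta\omega,x+r(\omega))$ on the compact metrisable abelian group $\mathbb{S}^1$ with Haar measure $\lambda$, so Proposition~\ref{quas} applies and yields that $\mathbb{P}\otimes\lambda$ is the only $\Theta_\mathbf{g}$-invariant measure with $\Omega$-marginal $\mathbb{P}$, which is exactly what was needed.

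The genuinely substantive input is Proposition~\ref{quas} itself; everything else is bookkeeping about how minimality, stationary measures, and invariant measures of the skew product transfer across a (deterministic) homeomorphism. The one step requiring mild care is the verification that $\mathbb{P}\otimes\lambda$ is $\Theta_\mathbf{g}$-ergodic, which is a hypothesis of Proposition~\ref{quas}: this has to be routed through the $\mathcal{F}_-$-measurable correspondence with $\mathbf{g}$-stationary measures, since the $\mathbb{S}^1$-marginal of a general $\varphi_\mathbf{g}$-invariant measure need not be $\mathbf{g}$-stationary, so a naive ergodic-decomposition argument on $\mathbb{P}\otimes\lambda$ does not work directly.
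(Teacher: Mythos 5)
Your proof is correct and follows the same approach as the paper, which simply states that the lemma follows immediately from Proposition~\ref{quas}. The details you fill in --- transporting the problem across $h$ to a random rotation, and verifying the ergodicity hypothesis of Proposition~\ref{quas} by identifying $\mathbb{P}\otimes\lambda$ as the $\mathcal{F}_-$-measurable $\varphi_\mathbf{g}$-ergodic measure corresponding to the unique (hence ergodic) $\mathbf{g}$-stationary measure $\lambda$ --- are exactly the bookkeeping the paper leaves implicit.
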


\begin{cor} \label{rot rfp}
Let $s \colon \Delta \to \mathbb{S}^1$ be a continuous function not identically equal to $[0]$. There does not exist a measurable function $\kappa \colon \Omega \to \mathbb{S}^1$ with the property that for $\mathbb{P}$-almost all $\omega$, $\kappa(\theta\omega)=\kappa(\omega)+s(\alpha_0)$.
\end{cor}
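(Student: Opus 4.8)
The plan is to exhibit $\kappa$ as a forbidden invariant object of the random rotation $\mathbf{f}=(f_\alpha)_{\alpha\in\Delta}$ defined by $f_\alpha(x)=x+s(\alpha)$. Indeed, the hypothesis $\kappa(\theta\omega)=\kappa(\omega)+s(\alpha_0)$ says precisely that $\kappa$ is a random fixed point of $\varphi_\mathbf{f}$, so $(\delta_{\kappa(\omega)})_{\omega\in\Omega}$ is a $\varphi_\mathbf{f}$-ergodic measure whose disintegration consists of Dirac masses. The idea is then to contradict this using the classification of $\varphi_\mathbf{f}$-invariant measures for random rotations. The one wrinkle is that $\mathbf{f}$ satisfies the non-degeneracy condition~(ii) in the definition of a random circle homeomorphism only when $s$ is not a non-zero constant element of $\mathbb{Q}/\mathbb{Z}$; accordingly I would split into two cases.

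In the first case, $s$ does not take all its values in $\mathbb{Q}/\mathbb{Z}$; since $s(\Delta)$ is connected and $\mathbb{Q}/\mathbb{Z}$ is totally disconnected, $s$ then takes an irrational value $\beta$. First I would verify condition~(ii) for $\mathbf{f}$: if some bi-infinite sequence $(x_n)$ satisfied $f_\alpha(x_n)=x_{n+1}$ for all $n$ and $\alpha$, then $s(\alpha)=x_{n+1}-x_n$ would be constant in $\alpha$, making $(x_n)$ an orbit of the irrational rotation by $\beta$ and hence dense. Next, since the irrational rotation $f_\beta$ is minimal, every $\mathbf{f}$-invariant closed set is $f_\beta$-invariant, hence equal to $\mathbb{S}^1$, so $\mathbb{S}^1$ is $\mathbf{f}$-minimal. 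As $\mathbf{f}$ is literally a random rotation, Lemma~\ref{rot inv} applies and tells us that $\mathbb{P}\otimes\rho_\mathbf{f}$ is the \emph{only} $\varphi_\mathbf{f}$-invariant measure, $\rho_\mathbf{f}$ being the unique $\mathbf{f}$-stationary measure, which is atomless by Remark~\ref{ergmin}. This contradicts the existence of $(\delta_{\kappa(\omega)})_{\omega}$, since a measure with atomless disintegration cannot agree with one whose disintegration is everywhere a Dirac mass.

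In the second case, $s(\Delta)\subseteq\mathbb{Q}/\mathbb{Z}$; connectedness of $\Delta$ then forces $s$ to be a constant $[p/q]$ with $\gcd(p,q)=1$, and $q\geq 2$ because $s\not\equiv[0]$. Here I would argue directly: iterating $\kappa(\theta\omega)=\kappa(\omega)+[p/q]$ gives $\kappa(\theta^q\omega)=\kappa(\omega)$ for $\mathbb{P}$-almost every $\omega$, so $\kappa$ is $\theta^q$-invariant. Regrouping coordinates into blocks of length $q$ identifies $(\Omega,\theta^q,\mathbb{P})$ with the Bernoulli shift over $(\Delta^q,\nu^{\otimes q})$, which is ergodic; hence $\kappa$ equals a constant $y\in\mathbb{S}^1$ $\mathbb{P}$-almost surely. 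But then $y=\kappa(\theta\omega)=y+[p/q]$, forcing $q\mid p$, a contradiction.

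The routine parts are the verifications in the first case (that $\mathbf{f}$ is genuinely a random circle homeomorphism and that $\mathbb{S}^1$ is $\mathbf{f}$-minimal) and the elementary computation in the second. The only point requiring real care is recognising at the outset that the case of a rational constant rotation falls outside the framework set up so far and must be dispatched separately by the hands-on ergodicity argument above; beyond that, in the generic case the statement is an immediate consequence of Lemma~\ref{rot inv}.
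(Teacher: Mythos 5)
Your proof is correct and follows essentially the same two-case split as the paper's own proof: the constant-rational case is dispatched by $\theta^q$-ergodicity, and the remaining case (where $s$ necessarily takes an irrational value, giving $\mathbf{f}$-minimality of $\mathbb{S}^1$) reduces to Lemma~\ref{rot inv}. You simply spell out a few steps the paper leaves implicit, such as verifying the non-degeneracy condition and the atomlessness argument for the final contradiction.
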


\begin{proof}
If $s$ is identically equal to some rational $[\frac{p}{q}]$, then $\kappa(\theta^q\omega)=\kappa(\omega)$ for $\mathbb{P}$-almost all $\omega$, so (since $\mathbb{P}$ is $\theta^q$-ergodic) $\kappa$ is almost everywhere constant, contradicting that $\kappa(\theta\omega)=\kappa(\omega)+s(\alpha_0)$ for $\mathbb{P}$-almost all $\omega$. If $s$ is not identically equal to a rational point, then $\mathbb{S}^1$ is minimal under the random map $(x \mapsto x+s(\alpha))_{\alpha \in \Delta}$, and so the result follows from Lemma~\ref{rot inv}.
\end{proof}

\subsection{Perturbation of two-sided sequences}

In Lemmas~\ref{pull} and \ref{pull2}, we have constructed two-sided stochastic processes exhibiting ``monotone convergence''. However, for the proof of the main theorem, this monotonicity needs to be strengthened to \emph{strict} monotonicity. In this section, we will introduce a trick that achieves this.

\begin{rmk}
In the case that $\mathbf{f}$ has atomless transition probabilities (meaning that $\nu(\alpha:f_\alpha(x)=y)=0$ for all $x,y \in \mathbb{S}^1$), the convergences in Lemma~\ref{pull} are already strictly monotone. However, one can show that for $u$ as constructed in the proof of Lemma~\ref{pull2}, the convergences $u_n \nearrow a_0$ and $u_{-n} \searrow r_0$ cannot be strictly monotone.
\end{rmk}

\begin{defi}
Let $(f_n)$ be a two-sided sequence in $\mathrm{Homeo}^+(\mathbb{S}^1)$. An \emph{orbit of $(f_n)$} is a two-sided sequence $(a_n)$ in $\mathbb{S}^1$ such that $f_n(a_n)=a_{n+1}$ for all $n \in \mathbb{Z}$.
\end{defi}

\begin{defi} \label{pert}
Let $(f_n)_{n \in \mathbb{Z}}$ be a two-sided sequence in $\mathrm{Homeo}^+(\mathbb{S}^1)$, and let $(x_n)_{n \in \mathbb{Z}}$ be a two-sided sequence in $\mathbb{S}^1$. We say that \emph{$(x_n)$ is $(f_n)$-generic} if the set $P:=\{n \in \mathbb{Z} : f_n(x_n) \neq x_{n+1}\}$ is unbounded both above and below. In this case, we define the \emph{anticlockwise perturbation} (resp.\ \emph{clockwise perturbation}) \emph{of $(x_n)$ by $(f_n)$} to be the two-sided sequence $(y_n)$ such that:
\begin{itemize}
\item for all $n \in P$, $y_n=x_n$ for all $n \in P$;
\item for all $n \nin P$, $y_n$ is the midpoint of the arc $[x_n,f_{n-1}(y_{n-1})]$ (resp.\ of the arc $[f_{n-1}(y_{n-1}),x_n]$).
\end{itemize}
\end{defi}

\begin{lemma} \label{monotone pert}
Let $(f_n)$ be a two-sided sequence in $\mathrm{Homeo}^+(\mathbb{S}^1)$, and let $(a_n)$ be an orbit of $(f_n)$.
\begin{enumerate}[\indent (I)]
\item Let $(x_n)$ be an $(f_n)$-generic two-sided sequence such that $f_{n-1}(x_{n-1}) \in [x_n,a_n[\,$ for all $n$. Let $(y_n)$ be the anticlockwise perturbation of $(x_n)$ by $(f_n)$. Then for all $n$, we have that $f_{n-1}(y_{n-1}) \in \;]y_n,a_n[$. Hence, writing
\[ z_n \ := \ \left\{ \begin{array}{c l} f_{-1} \circ \ldots \circ f_{-n}(y_{-n}) & n > 0 \\ (f_{-n-1} \circ \ldots \circ f_0)^{-1}(y_{-n}) & n < 0 \\ y_0 & n=0, \end{array} \right. \]
we have that $z_n \in \;]z_{n-1},a_0[$ for all $n \in \mathbb{Z}$.
\item Let $(x_n)$ be an $(f_n)$-generic two-sided sequence such that $f_{n-1}(x_{n-1}) \in \;]a_n,x_n]\,$ for all $n$. Let $(y_n)$ be the clockwise perturbation of $(x_n)$ by $(f_n)$. Then for all $n$, we have that $f_{n-1}(y_{n-1}) \in \;]a_n,y_n[$. Hence, defining $(z_n)$ with reference to $(y_n)$ exactly as in part~(I), we have that $z_n \in \;]a_0,z_{n-1}[$ for all $n \in \mathbb{Z}$.
\end{enumerate}
\end{lemma}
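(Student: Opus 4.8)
The plan is to prove both parts of Lemma~\ref{monotone pert} by establishing the claimed strict-monotonicity for the ``one-step'' relation $f_{n-1}(y_{n-1}) \in \;]y_n,a_n[$ first, and then deduce the statement about $(z_n)$ by a straightforward induction; part~(II) is entirely symmetric to part~(I) (conjugating everything by the orientation-reversing map $x \mapsto -x$ interchanges the two), so I will only write out part~(I). Throughout, recall that $(y_n)$ agrees with $(x_n)$ on the unbounded set $P = \{n : f_n(x_n) \neq x_{n+1}\}$, and that off $P$ the point $y_n$ is defined recursively as the midpoint of $[x_n, f_{n-1}(y_{n-1})]$; since $P$ is unbounded below, this recursion is well-founded (every $n$ is preceded by some element of $P$ at which $y$ is ``anchored'' to $x$).

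First I would verify the one-step claim, namely that $f_{n-1}(y_{n-1}) \in \;]y_n, a_n[$ for every $n$, by induction along $n$ working forward from elements of $P$. There are two cases. If $n \in P$, then $y_n = x_n$ and $y_{n-1} = x_{n-1}$ (wait---more carefully, $y_{n-1}$ may differ from $x_{n-1}$, but the hypothesis $f_{n-1}(x_{n-1}) \in [x_n, a_n[$ together with the inductive control on $y_{n-1}$ gives $f_{n-1}(y_{n-1}) \in [x_n,a_n[ \,=\, [y_n,a_n[$; and since $n \in P$ means $f_{n-1}(y_{n-1})$---which, once we have propagated the perturbation, equals $f_{n-1}(x_{n-1})$ when $n-1 \in P$, and in general lies strictly between---is not equal to $x_n$). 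The cleaner way to organise this: I claim by induction that for all $n$, $f_{n-1}(y_{n-1}) \in \;]y_n, a_n[$. Suppose this holds for all indices $< n$. If $n-1 \in P$: then $y_{n-1} = x_{n-1}$, so $f_{n-1}(y_{n-1}) = f_{n-1}(x_{n-1}) \in [x_n,a_n[$; if moreover $n \in P$ we also have $x_n \neq f_{n-1}(x_{n-1})$ hmm, not immediately. Let me instead argue: in all cases $f_{n-1}(y_{n-1}) \in [x_n, a_n[$, which I establish by noting $f_{n-1}$ is orientation-preserving and $y_{n-1} \in [x_{n-1}, a_{n-1}[$ (from the inductive hypothesis, since $f_{n-2}(y_{n-2}) \in \;]y_{n-1},a_{n-1}[ \subset [x_{n-1}, a_{n-1}[$ when $n-1 \notin P$, and $y_{n-1}=x_{n-1}$ when $n-1 \in P$), hence $f_{n-1}(y_{n-1}) \in [f_{n-1}(x_{n-1}), f_{n-1}(a_{n-1})[ \,=\, [f_{n-1}(x_{n-1}), a_n[ \subset [x_n, a_n[$. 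Now if $n \notin P$, then by definition $y_n$ is the midpoint of $[x_n, f_{n-1}(y_{n-1})]$, which lies in $[x_n, a_n[$; since $f_{n-1}(y_{n-1}) \neq x_n$ would be needed---but if $f_{n-1}(y_{n-1}) = x_n$ then the arc is degenerate and $y_n = x_n = f_{n-1}(y_{n-1})$, contradicting nothing yet, but then $f_{n-1}(y_{n-1}) \in \;]y_n, a_n[$ fails. I need to rule this out: if $n \notin P$ then $f_n(x_n) = x_{n+1}$, which does not directly prevent $f_{n-1}(y_{n-1}) = x_n$. Here is the actual mechanism: the midpoint construction guarantees $y_n \neq f_{n-1}(y_{n-1})$ precisely when the arc $[x_n, f_{n-1}(y_{n-1})]$ is nondegenerate; and inductively, once we are strictly to the left of $a_n$ and at some point in the past strictly separated from the orbit (which happens at the most recent element of $P$ below $n$), each midpoint step keeps us strictly between. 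So the genuine base of the induction is at elements of $P$, and the key observation is: \emph{if $m \in P$, then $f_m(y_m) = f_m(x_m) \neq x_{m+1} \supseteq$} wait, $x_{m+1}$ need not relate to $y_{m+1}$.

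Let me restate the intended argument cleanly. The main obstacle, and the crux of the lemma, is exactly this strictness: showing that the midpoint-perturbation genuinely pushes the image strictly off the orbit point and strictly off the successor. I would handle it as follows. Fix $n$ and let $m \le n-1$ be the largest element of $P$ that is $\le n - 1$ (exists since $P$ unbounded below). For indices $m < j \le n$ we have $j \notin P$, so $f_j(x_j) = x_{j+1}$ for $m \le j \le n-1$; in particular $x_n = f_{n-1} \circ \cdots \circ f_m(x_m)$. Also $y_m = x_m$. Now $y_{m+1}$ is the midpoint of $[x_{m+1}, f_m(y_m)] = [x_{m+1}, f_m(x_m)]$. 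Since $m \in P$, $f_m(x_m) \neq x_{m+1}$, so this arc is nondegenerate and $y_{m+1}$ lies strictly in its interior: $y_{m+1} \in \;]x_{m+1}, f_m(x_m)[$ (using the hypothesis $f_m(x_m) \in [x_{m+1}, a_{m+1}[$ to know the arc is the "short" one inside $[x_{m+1},a_{m+1}[$, so $y_{m+1} \in \;]x_{m+1}, a_{m+1}[\,$). Propagating forward: I claim $y_j \in \;]x_j, a_j[$ for all $m < j \le n$. Given $y_{j-1} \in \;]x_{j-1}, a_{j-1}[$ (or $=x_{j-1}$ at $j-1=m$), orientation-preservation gives $f_{j-1}(y_{j-1}) \in \;]f_{j-1}(x_{j-1}), a_j[ \,=\, ]x_j, a_j[$ (for $j-1 > m$, using $f_{j-1}(x_{j-1}) = x_j$) or $\in [x_j, f_{j-1}(x_{j-1})[$ hmm for $j - 1 = m$ it is $\in\;]x_j,a_j[$ as shown. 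Then $y_j$, being the midpoint of $[x_j, f_{j-1}(y_{j-1})]$ with $f_{j-1}(y_{j-1}) \in \;]x_j, a_j[$, lies in $\;]x_j, a_j[$ as well. This gives $f_{n-1}(y_{n-1}) \in \;]x_n, a_n[ \subseteq\; ]y_n, a_n[$ when $n \notin P$ (since $y_n \in [x_n, f_{n-1}(y_{n-1})[$, as $y_n$ is the midpoint), and $f_{n-1}(y_{n-1}) \in \;]x_n, a_n[ \,=\,]y_n, a_n[$ when $n \in P$ (since then $y_n = x_n$). That completes the one-step claim.

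Finally, for the statement about $(z_n)$: by definition $z_n = \varphi(z_{n-1})$ where $\varphi$ is an appropriate composition of the $f_j$'s or their inverses, and the one-step claim $f_{n-1}(y_{n-1}) \in \;]y_n, a_n[$ says precisely that applying $f_{n-1}$ to the ``running image'' moves it strictly anticlockwise-past within the arc ending at $a_n$. Concretely, for $n > 0$ one checks by induction on $n$ that $z_n = f_{-1}\circ\cdots\circ f_{-n}(y_{-n})$ satisfies $z_n \in \;]z_{n-1}, a_0[$: the base case $z_1 = f_{-1}(y_{-1}) \in \;]y_0, a_0[ \,=\,]z_0,a_0[$ is the one-step claim, and the inductive step applies the composition $f_{-1}\circ\cdots\circ f_{-(n-1)}$ (an orientation-preserving homeomorphism fixing the orbit, sending $a_{-(n-1)}$ to $a_0$) to the inclusion $f_{-n}(y_{-n}) \in \;]y_{-(n-1)}, a_{-(n-1)}[$, together with the already-known $z_{n-1} \in \;]z_{n-2},a_0[$, to get $z_n \in \;]z_{n-1}, a_0[$; one must check the composition sends the arc $]y_{-(n-1)}, a_{-(n-1)}[\,$ into $]z_{n-1}, a_0[\,$, which holds because orientation-preserving maps preserve the cyclic order and $z_{n-1}$ is the image of $y_{-(n-1)}$ under this composition. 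The case $n < 0$ is analogous using the inverse maps and the fact that $f_{n-1}(y_{n-1}) \in \;]y_n, a_n[$ rearranges to $y_{n-1} \in \;]a_{n-1}, f_{n-1}^{-1}(y_n)[$ wait---more simply, applying $f_{n-1}^{-1}$ to $f_{n-1}(y_{n-1}) \in \;]y_n, a_n[\,$ gives $y_{n-1} \in \;]f_{n-1}^{-1}(y_n), a_{n-1}[\,$, i.e. the backward step also stays strictly inside the arc ending at the orbit point, which propagates to $z_n \in\;]z_{n-1}, a_0[\,$ for $n < 0$ as well. For part~(II), apply part~(I) to the sequences $(\hat f_n)$, $(\hat a_n)$, $(\hat x_n)$ obtained by conjugating with the orientation-reversing involution $x \mapsto -x$: the hypothesis $f_{n-1}(x_{n-1}) \in \;]a_n, x_n]$ becomes $\hat f_{n-1}(\hat x_{n-1}) \in [\hat x_n, \hat a_n[\,$, the clockwise perturbation of $(x_n)$ corresponds to the anticlockwise perturbation of $(\hat x_n)$, and the conclusion $z_n \in \;]a_0, z_{n-1}[\,$ is the image under $x \mapsto -x$ of $\hat z_n \in \;]\hat z_{n-1}, \hat a_0[\,$. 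I expect the main obstacle to be bookkeeping the ``short arc'' orientation conventions---ensuring at each step that the arc $[x_j, f_{j-1}(y_{j-1})]$ really is the one contained in $[x_j, a_j[$ rather than the complementary long arc---which is why the induction must be seeded at elements of $P$ where the hypothesis $f_{n-1}(x_{n-1}) \in [x_n, a_n[$ pins down the correct side.
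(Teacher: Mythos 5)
The paper omits the proof, saying only that it ``is a straightforward exercise (and visually quite clear)''; your argument supplies the natural proof and is correct in substance. A few slips are worth repairing. The displayed inclusion ``$\;]x_n,a_n[ \; \subseteq \; ]y_n,a_n[$'' is backwards: since $y_n \in \;]x_n,a_n[$, it is $\;]y_n,a_n[$ that lies inside $\;]x_n,a_n[$. The conclusion $f_{n-1}(y_{n-1}) \in \;]y_n,a_n[$ is still correct, but the reason is that the midpoint $y_n$ sits strictly in $\;]x_n,f_{n-1}(y_{n-1})[$ while $f_{n-1}(y_{n-1}) \in \;]x_n,a_n[$, so the anticlockwise cyclic order is $x_n,\,y_n,\,f_{n-1}(y_{n-1}),\,a_n$. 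Also, since $m \in P$, the identity $f_j(x_j)=x_{j+1}$ holds only for $m < j \le n-1$, so $x_n=f_{n-1}\circ\cdots\circ f_{m+1}(x_{m+1})$ rather than $\cdots\circ f_m(x_m)$ (though you do not use this); the claim ``$y_j \in \;]x_j,a_j[$ for $m<j\le n$'' should stop at $j=n-1$, as it fails at $j=n$ when $n \in P$; and you implicitly assume $m+1 \notin P$ when evaluating $y_{m+1}$ by the midpoint rule, which can fail if $m+1=n$ and $n \in P$---but that case is immediate, since then $f_{n-1}(y_{n-1})=f_m(x_m) \in \;]x_n,a_n[ \,=\, ]y_n,a_n[$ directly. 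Finally, in the $(z_n)$ step you do not need the inductive hypothesis $z_{n-1} \in \;]z_{n-2},a_0[$: the orientation-preserving homeomorphism $\Phi=f_{-1}\circ\cdots\circ f_{-(n-1)}$ carries the arc $\;]y_{-(n-1)},a_{-(n-1)}[$ \emph{onto} $\;]z_{n-1},a_0[$ (it sends $y_{-(n-1)} \mapsto z_{n-1}$, $a_{-(n-1)} \mapsto a_0$, and preserves cyclic order), so the one-step inclusion transports directly without further induction.
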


\noindent The proof is a straightforward exercise (and visually quite clear).
\\ \\
In the following corollary, we write $x_n \sesearrows x$ (resp.\ $x_n \nenearrows x$) to mean: ``$x_n \searrow x$ (resp.\ $x_n \nearrow x$) with $x_n \neq x_{n+1}$ for all $n \geq 0$''.

\begin{cor} \label{strict pull}
(A) In the setting of Lemma~\ref{pull2}, there exist measurable functions $\tilde{u},\tilde{v} \colon \Omega \to \mathbb{S}^1$ such that for $\mathbb{P}$-almost every $\omega$,
\[ \hspace{-1mm} \begin{array}{l l l}
\tilde{u}_n^0(\omega)\,:=\,\varphi_\mathbf{f}(n,\theta^{-n}\omega)(\tilde{u}(\theta^{-n}\omega)) \nenearrows a_0(\omega), & & \tilde{u}_{-n}^0(\omega)\,:=\,\varphi_\mathbf{f}(-n,\theta^n\omega)(\tilde{u}(\theta^n\omega)) \sesearrows r_0(\omega), \\
\hspace{0.3mm}\tilde{v}_n^0(\omega)\,:=\,\varphi_\mathbf{f}(n,\theta^{-n}\omega)(\tilde{v}(\theta^{-n}\omega)) \sesearrows a_0(\omega), & & \hspace{0.3mm}\tilde{v}_{-n}^0(\omega)\,:=\,\varphi_\mathbf{f}(-n,\theta^n\omega)(\tilde{v}(\theta^n\omega)) \nenearrows r_0(\omega).
\end{array}\]

(B) In the setting of Lemma~\ref{pull}, there exist measurable functions $\tilde{u}^i,\tilde{v}^i \colon \Omega \to \mathbb{S}^1$, $0 \leq i \leq k_\mathbf{f}-1$, such that for $\mathbb{P}$-almost every $\omega$,
\[ \hspace{-1mm} \begin{array}{l l l}
\tilde{u}_n^i(\omega)\,:=\,\varphi_\mathbf{f}(n,\theta^{-n}\omega)(\tilde{u}^{i-nl_\mathbf{f}}(\theta^{-n}\omega)) \nenearrows a_i(\omega), & & \tilde{u}_{-n}^i(\omega)\,:=\,\varphi_\mathbf{f}(-n,\theta^n\omega)(\tilde{u}^{i+nl_\mathbf{f}}(\theta^n\omega)) \sesearrows r_{i-1}(\omega), \\
\hspace{0.3mm}\tilde{v}_n^i(\omega)\,:=\,\varphi_\mathbf{f}(n,\theta^{-n}\omega)(\tilde{v}^{i-nl_\mathbf{f}}(\theta^{-n}\omega)) \sesearrows a_i(\omega), & & \hspace{0.3mm}\tilde{v}_{-n}^i(\omega)\,:=\,\varphi_\mathbf{f}(-n,\theta^n\omega)(\tilde{v}^{i+nl_\mathbf{f}}(\theta^n\omega)) \nenearrows r_i(\omega).
\end{array}\]
\end{cor}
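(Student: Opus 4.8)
We establish Corollary~\ref{strict pull} by pushing the partially-strict monotone processes of Lemmas~\ref{pull} and \ref{pull2} through the perturbation construction of Definition~\ref{pert} and Lemma~\ref{monotone pert}. I describe part~(B); part~(A) is the case $k_\mathbf{f}=1$, $l_\mathbf{f}=0$. Fix $\omega$ and a target index $i$, put $f_m:=f_{\alpha_m}$, and let $A_m:=a_{i+ml_\mathbf{f}}(\theta^m\omega)$; this is an orbit of $(f_m)$ because $f_{\alpha_0}(a_j(\omega))=a_{j+l_\mathbf{f}}(\theta\omega)$, which is just the $\varphi_\mathbf{f}$-invariance of the measure in Lemma~\ref{pull}. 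Take the base sequence $x_m:=\partial_-G_{i+ml_\mathbf{f}}$; from $f_\alpha(G_j)\subset G_{j+l_\mathbf{f}}$ and $a_j(\omega)\in G_j^\circ$ one reads off $f_m(x_m)\in[x_{m+1},A_{m+1}[\,$ for all $m$. Granting for the moment that $(x_m)$ is $(f_m)$-generic, Lemma~\ref{monotone pert}(I) furnishes the anticlockwise perturbation $(y_m)$ with $f_m(y_m)\in\,]y_{m+1},A_{m+1}[\,$, and we set $\tilde u^i(\omega):=y_0$. Since the perturbation is a causal recursion it is shift-equivariant, and tracking the index shift $i\mapsto i+l_\mathbf{f}$ shows that the value $y_m$ computed at $\omega$ for target $i$ equals $\tilde u^{i+ml_\mathbf{f}}(\theta^m\omega)$; consequently $\tilde u^i_n(\omega)$ as in the statement coincides with the sequence $z_n$ of Lemma~\ref{monotone pert}(I), so $\tilde u^i_n(\omega)\in\,]\tilde u^i_{n-1}(\omega),a_i(\omega)[\,$ for every $n\in\mathbb{Z}$. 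Measurability of $\tilde u^i$ is routine, as $y_0$ depends on $\omega$ only through the largest element of $P$ that is $\le0$ and finitely many coordinates of $\omega$. One builds $\tilde v^i$ the same way, using the seeds $\partial_+G_j$, the clockwise perturbation, and Lemma~\ref{monotone pert}(II).

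Genericity of the base sequence is where the non-degeneracy condition is used. In part~(B) the events $\{m\in P\}=\{f_{\alpha_m}(\partial_-G_{i+ml_\mathbf{f}})\neq\partial_-G_{i+(m+1)l_\mathbf{f}}\}$ involve pairwise disjoint coordinates, hence are independent, and their probabilities are periodic in $m$ of period dividing $q$; were $\sum_{m\ge0}\mathbb{P}(m\in P)$ finite, each of these probabilities would be $0$, so by full support of $\nu$ and continuity of $(\alpha,x)\mapsto f_\alpha(x)$ the finite set $\{\partial_-G_{i+jp}:0\le j<q\}$ would be $\mathbf{f}$-invariant, contradicting that $\mathbf{f}$ has no non-empty finite invariant set; hence $P$ is a.s.\ unbounded above, and symmetrically below. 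In part~(A) the seed is random: one takes $x_m:=u(\theta^m\omega)$ (resp.\ $v(\theta^m\omega)$) with $u,v$ from Lemma~\ref{pull2} and $A_m:=a_0(\theta^m\omega)$, the required inclusion $f_m(x_m)\in[x_{m+1},A_{m+1}[\,$ amounting, after shifting by $\theta^m$, to $\varphi_\mathbf{f}(1,\omega)[u(\omega),a_0(\omega)]\subset[u(\theta\omega),a_0(\theta\omega)]$ from inside the proof of Lemma~\ref{pull2}. Independence is now lost, so genericity comes from Birkhoff's ergodic theorem: the event $E_0:=\{f_{\alpha_0}(u(\omega))\neq u(\theta\omega)\}$ has positive probability (otherwise $(\delta_{u(\omega)})$ would be $\varphi_\mathbf{f}$-invariant and $u$ a random fixed point, impossible by Lemma~\ref{sym contr}(B) since $u(\omega)\in\,]r_0(\omega),a_0(\omega)[$), and as $P(\omega)=\{m:\theta^m\omega\in E_0\}$ this forces $P$ to have positive density above and below, hence to be unbounded in both directions.

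It remains to identify the limits. A short induction — the ``visually clear'' point noted after Lemma~\ref{monotone pert} — gives $y_m\in[x_m,A_m]$, so $z_n$ is sandwiched between the processes of Lemma~\ref{pull} (resp.\ \ref{pull2}): $z_n\in[u^i_n(\omega),a_i(\omega)]$ and $z_{-n}\in[u^i_{-n}(\omega),a_i(\omega)]$ for $n\ge0$. Since $u^i_n(\omega)\nearrow a_i(\omega)$ the forward arcs shrink to $\{a_i(\omega)\}$, forcing $\tilde u^i_n(\omega)\to a_i(\omega)$ and hence $\tilde u^i_n(\omega)\nenearrows a_i(\omega)$. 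Backwards, $(\tilde u^i_{-n}(\omega))_n$ is strictly clockwise-monotone, so converges to some $\ell_i(\omega)$; the cocycle identity $f_{\alpha_0}(\tilde u^i_{-n}(\omega))=\tilde u^{i+l_\mathbf{f}}_{-(n-1)}(\theta\omega)$ passes to the limit to give $f_{\alpha_0}(\ell_i(\omega))=\ell_{i+l_\mathbf{f}}(\theta\omega)$, so $\bigl(\tfrac1q\sum_{j}\delta_{\ell_{i+jp}(\omega)}\bigr)$ is $\varphi_\mathbf{f}$-invariant. The backward sandwich and strictness place $\ell_i(\omega)$ in $[r_{i-1}(\omega),a_i(\omega)]\subset H_{i-1}\cup G_i$ while excluding $\ell_i(\omega)=a_i(\omega)$; by the uniqueness content of Lemmas~\ref{Li} and \ref{pull} — the only random fixed point (up to the permutation) meeting that arc other than $(a_i)$ is $(r_i)$ — we conclude $\ell_i=r_{i-1}$ a.s., i.e.\ $\tilde u^i_{-n}(\omega)\sesearrows r_{i-1}(\omega)$. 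The statements for $\tilde v^i$ follow identically, now giving $\tilde v^i_n(\omega)\sesearrows a_i(\omega)$ and $\tilde v^i_{-n}(\omega)\nenearrows r_i(\omega)$; in part~(A) one argues the same way, invoking Lemma~\ref{sym contr}(B) (every random fixed point equals $a_0$ or $r_0$) and strict monotonicity to select $r_0$.

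The main obstacle is the genericity step, and within it part~(A), where the loss of independence obliges one to rule out — via ergodicity — that $u$ (or $v$) behaves like a random fixed point. A lesser technical point is verifying that the backward limit $\ell_i$ is genuinely a random fixed point up to the component permutation, so that the uniqueness results already proved can be invoked to identify it with the appropriate $r_i$.
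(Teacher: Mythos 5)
Your construction is the same as the paper's: seed the anticlockwise/clockwise perturbation of Definition~\ref{pert} with the base sequences of Lemmas~\ref{pull} and \ref{pull2}, apply Lemma~\ref{monotone pert} to get strict monotonicity, and identify the limits. The genericity arguments are different but correct: your independence/second Borel--Cantelli argument in~(B) and your ergodic-theorem argument in~(A) both work, whereas the paper reads genericity off directly from the non-strict convergences already established (since $u_n^i(\omega)\nearrow a_i(\omega)$ with $u_n^i\neq a_i$, there must be infinitely many strict steps, and similarly backwards). This part is fine.

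The problem is the identification of the backward limit $\ell_i$ in part~(B). Your backward sandwich gives $z_{-n}\in[u^i_{-n}(\omega),a_i(\omega)]$, but since $u^i_{-n}(\omega)\searrow r_{i-1}(\omega)$ these arcs \emph{grow}, so the sandwich alone only places $\ell_i(\omega)$ in $[r_{i-1}(\omega),a_i(\omega)[$. You then conclude $\ell_i=r_{i-1}$ by appealing to ``the uniqueness content of Lemmas~\ref{Li} and \ref{pull}.'' This does not go through as stated: Lemma~\ref{Li} gives uniqueness of the $\mathbf{f}^{-1}$-stationary measure on $L_{i-1}$, i.e.\ of the $\mathcal{F}_+$-measurable $\varphi_\mathbf{f}$-invariant measure supported there, and Lemma~\ref{pull}/Remark~\ref{ergmin} give the $\mathcal{F}_-$-measurable case. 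Neither classifies \emph{arbitrary} random $k_\mathbf{f}$-periodic points of $\varphi_\mathbf{f}$ in the non-minimal setting, and the paper never proves such a classification (the global contraction statement in Section~2.2 is only asserted, and is not used in this proof). To use Lemma~\ref{Li} you would first have to show that $\ell_i$ is $\mathcal{F}_+$-measurable (nontrivial, since $\tilde u^{i+nl_\mathbf{f}}(\theta^n\omega)$ depends on $\alpha_0,\dots,\alpha_n$ through a random stopping index) and that the marginal of the resulting invariant measure charges $L_{i-1}$ rather than $G_i^\circ$; neither point is addressed. (In part~(A) you instead invoke Lemma~\ref{sym contr}(B), which \emph{does} classify all random fixed points in the minimal contractive case, so that instance is fine.)

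The fix is the simple observation the paper makes, which you already have all the ingredients for: since $y_n=x_n$ for every $n\in P$, you get $z_{-n}=u^i_{-n}(\omega)$ whenever $n\in P$, and $z_n=u^i_n(\omega)$ whenever $-n\in P$. Genericity says $P$ is unbounded in both directions, so the strictly monotone sequence $(z_n)_{n\in\mathbb{Z}}$ agrees with $(u^i_n(\omega))_{n\in\mathbb{Z}}$ at infinitely many positive and negative indices, and therefore inherits both limits $a_i(\omega)$ and $r_{i-1}(\omega)$ at once. This handles forward and backward directions uniformly and avoids the invariant-measure uniqueness appeal entirely.
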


\begin{proof}
(A)~Taking $u$ as in Lemma~\ref{pull2}, we have that for $\mathbb{P}$-almost all $\omega$, $(u(\theta^n\omega))_{n \in \mathbb{Z}}$ is $(f_{\alpha_n})_{n \in \mathbb{Z}}$-generic; so let $\tilde{u}(\omega)$ be the $0$-coordinate of the anticlockwise perturbation of $(u(\theta^n\omega))$ by $(f_{\alpha_n})$. Note that for $\mathbb{P}$-almost all $\omega$, the anticlockwise perturbation of $(u(\theta^n\omega))$ by $(f_{\alpha_n})$ is $(\tilde{u}(\theta^n\omega))$. By Lemma~\ref{monotone pert} with $a_n=a_0(\theta^n\omega)$, we have that $\tilde{u}_n^0(\omega) \in \;]\tilde{u}_{n-1}^0(\omega),a_0(\omega)[$ for all $n \in \mathbb{Z}$, $\mathbb{P}$-almost surely; but also, for $\mathbb{P}$-almost all $\omega$, for infinitely many positive and negative $n$ we have that $\tilde{u}(\theta^n\omega)=u(\theta^n\omega)$ and so $\tilde{u}_n^0(\omega)=u_n(\omega)$ (where $u_n$ is as in Lemma~\ref{pull2}). Hence $\tilde{u}_n^0(\omega) \nenearrows a_0(\omega)$ and $\tilde{u}_{-n}^0(\omega) \sesearrows r_0(\omega)$. The construction and proof for $\tilde{v}$ is similar.
\\ \\
(B)~By Lemma~\ref{pull}, for $\mathbb{P}$-almost all $\omega$, for each $i$, $(\partial_-G_{i+nl_\mathbf{f}})_{n \in \mathbb{Z}}$ is $(f_{\alpha_n})_{n \in \mathbb{Z}}$-generic; so for each $i$, let $\tilde{u}^i(\omega)$ be the $0$-coordinate of the anticlockwise perturbation of $(\partial_-G_{i+nl_\mathbf{f}})$ by $(f_{\alpha_n})$. Note that for $\mathbb{P}$-almost all $\omega$, the anticlockwise perturbation of $(\partial_-G_{i+nl_\mathbf{f}})$ by $(f_{\alpha_n})$ is $(\tilde{u}^{i+nl_\mathbf{f}}(\theta^n\omega))$. By Lemma~\ref{pull} and Lemma~\ref{monotone pert} (replacing $a_n$ with $a_{i+nl_\mathbf{f}}(\theta^n\omega)$ and $x_n$ with $\partial_-G_{i+nl_\mathbf{f}}$), we have that $\tilde{u}_n^i(\omega) \in \;]\tilde{u}_{n-1}^i(\omega),a_i(\omega)[$ for all $n \in \mathbb{Z}$, $\mathbb{P}$-almost surely; but also, for $\mathbb{P}$-almost all $\omega$, for infinitely many positive and negative $n$ we have that $\tilde{u}^{i+nl_\mathbf{f}}(\theta^n\omega)=\partial_-G_{i+nl_\mathbf{f}}$ and so $\tilde{u}_n^i(\omega)=u_n^i(\omega)$ (where $u_n^i$ is as in Lemma~\ref{pull}). Hence $\tilde{u}_n^i(\omega) \nenearrows a_i(\omega)$ and $\tilde{u}_{-n}^i(\omega) \sesearrows r_{i-1}(\omega)$. The construction and proof for $\tilde{v}^i$ is similar.
\end{proof}

\section{Proof of main results}

To prove Theorem~\ref{MAIN}, it will be sufficient just to prove the characterisation of \emph{orientationally} conjugate dynamics, since the subsequent characterisation of \emph{topologically} conjugate dynamics then follows immediately by Remark~\ref{top op}.
\\ \\
We split the proof of Theorem~\ref{MAIN} into two cases: the ``generic'' case, where neither $\mathbf{f}$ nor $\mathbf{g}$ comes under category (c'); and the ``degenerate'' case, where at least one of $\mathbf{f}$ and $\mathbf{g}$ comes under category (c').

\subsection{Generic case}

An \emph{$\Omega$-based circle homeomorphism} is an $\Omega$-indexed family $(h_\omega)$ of orientation-preserving circle homeomorphisms $h_\omega \in \mathrm{Homeo}^+(\mathbb{S}^1)$ such that $\omega \mapsto h_\omega(x)$ is measurable for each $x \in \mathbb{S}^1$.
\\ \\
For each $k \in \mathbb{N}$ and $l \in \{0,\ldots,k-1\}$, define $g_{k,l} \in \mathrm{Homeo}^+(\mathbb{S}^1)$ by
\[ g_{k,l}([x]) \ = \ \left[ x \, + \; \tfrac{1}{2\pi k}\sin(2\pi kx)+\tfrac{l}{k} \right]. \]

\begin{thm} \label{genconj}
Let $\mathbf{f}$ be a random circle homeomorphism. In the case that $\mathbb{S}^1$ is $\mathbf{f}$-minimal, assume that $\mathbf{f}$ does not admit a symmetry. Let $k_\mathbf{f}$ and $l_\mathbf{f}$ be as in Proposition~\ref{permute}. Then there exists an $\Omega$-based circle homeomorphism $(h_\omega)$ such that for $\mathbb{P}$-almost all $\omega$,
\[ f_{\alpha_0} \ = \ h_{\theta\omega}^{-1} \ \circ \ g_{k_\mathbf{f},l_\mathbf{f}} \ \circ \ h_\omega \, . \]
\end{thm}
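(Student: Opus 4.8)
The plan is to treat the cases ``$\mathbb{S}^1$ is $\mathbf{f}$-minimal'' and ``$\mathbb{S}^1$ is not $\mathbf{f}$-minimal'' uniformly, after extracting a common combinatorial picture. Write $k:=k_\mathbf{f}$, $l:=l_\mathbf{f}$. In the minimal case the no-symmetry hypothesis excludes options (b) and (c) of Proposition~\ref{antonov} --- option~(b) presupposes a symmetry, and in option~(c) $\mathbf{f}$ would be deterministically orientationally conjugate to a random rotation, but $\tau_2$ is a symmetry of any random rotation, so $\mathbf{f}$ would inherit a symmetry --- hence $\mathbf{f}$ is contractive and $k=1$, $l=0$. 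In both cases I then assemble, for $\mathbb{P}$-almost every $\omega$: points $a_0(\omega),\ldots,a_{k-1}(\omega)$ and $r_0(\omega),\ldots,r_{k-1}(\omega)$ lying anticlockwise in the cyclic order $a_0,r_0,a_1,r_1,\ldots,a_{k-1},r_{k-1}$, with $f_{\alpha_0}(a_i(\omega))=a_{i+l}(\theta\omega)$ and $f_{\alpha_0}(r_i(\omega))=r_{i+l}(\theta\omega)$ (the $a_i,r_i$ coming from Definition~\ref{contractive}/Lemma~\ref{pull2} in the minimal case, from Proposition~\ref{permute} and Lemma~\ref{pull} otherwise); together with measurable functions $\tilde u^i,\tilde v^i\colon\Omega\to\mathbb{S}^1$ from Corollary~\ref{strict pull} such that, setting $\tilde u^i_n(\omega):=\varphi_\mathbf{f}(n,\theta^{-n}\omega)(\tilde u^{i-nl}(\theta^{-n}\omega))$ and similarly $\tilde v^i_n$, the arcs $[\tilde u^i_n(\omega),\tilde u^i_{n+1}(\omega)]$ $(n\in\mathbb{Z})$ are nondegenerate with pairwise-disjoint interiors and tile $\,]r_{i-1}(\omega),a_i(\omega)[\,$, while the arcs $[\tilde v^i_{n+1}(\omega),\tilde v^i_n(\omega)]$ tile $\,]a_i(\omega),r_i(\omega)[\,$. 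A direct computation with the cocycle identity then gives $\varphi_\mathbf{f}(1,\omega)(\tilde u^i_n(\omega))=\tilde u^{i+l}_{n+1}(\theta\omega)$, and likewise for $\tilde v$.

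On the target side, put $r^*_j:=[j/k]$ and $a^*_j:=[(2j+1)/(2k)]$; these lie anticlockwise in the order $r^*_0,a^*_0,r^*_1,a^*_1,\ldots$, one has $g_{k,l}(r^*_j)=r^*_{j+l}$ and $g_{k,l}(a^*_j)=a^*_{j+l}$, and $g_{k,l}$ carries $\,]r^*_m,a^*_m[\,$ strictly monotonically into $\,]r^*_{m+l},a^*_{m+l}[\,$ and $\,]a^*_m,r^*_{m+1}[\,$ strictly monotonically into $\,]a^*_{m+l},r^*_{m+1+l}[\,$, in both cases contracting towards $a^*_{m+l}$. Fix $u^*_j\in\,]r^*_j,a^*_j[\,$ and $v^*_j\in\,]a^*_j,r^*_{j+1}[\,$ (say the midpoints) and set $u^{*,j}_n:=g_{k,l}^n(u^*_{j-nl})$, $v^{*,j}_n:=g_{k,l}^n(v^*_{j-nl})$; then $(u^{*,j}_n)_n$ tiles $\,]r^*_j,a^*_j[\,$ by nondegenerate arcs with $g_{k,l}(u^{*,j}_n)=u^{*,j+l}_{n+1}$, and symmetrically for $v$. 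To reconcile the out-of-phase cyclic orders ($a,r,a,r,\ldots$ for $\mathbf{f}$ versus $r,a,r,a,\ldots$ for $g_{k,l}$) I use the identification $a_i(\omega)\leftrightarrow a^*_i$, $r_i(\omega)\leftrightarrow r^*_{i+1}$, under which $\,]r_{i-1}(\omega),a_i(\omega)[\;\leftrightarrow\;]r^*_i,a^*_i[\,$ and $\,]a_i(\omega),r_i(\omega)[\;\leftrightarrow\;]a^*_i,r^*_{i+1}[\,$, compatibly with the index shift by $l$ on both sides.

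To build $h_\omega$, choose for each $i$ --- measurably in $\omega$, e.g.\ affinely in suitable lifts to $\mathbb{R}$ --- orientation-preserving homeomorphisms $\psi^i_\omega\colon[\tilde u^i_0(\omega),\tilde u^i_1(\omega)]\to[u^{*,i}_0,u^{*,i}_1]$ and $\chi^i_\omega\colon[\tilde v^i_1(\omega),\tilde v^i_0(\omega)]\to[v^{*,i}_1,v^{*,i}_0]$ matching endpoints in cyclic order; this is possible precisely because all four arcs are nondegenerate --- on the $\mathbf{f}$-side thanks to the strictness in Corollary~\ref{strict pull}, on the $g_{k,l}$-side because $g_{k,l}$ has no fixed point in the relevant open arcs. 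Set $h_\omega(a_i(\omega)):=a^*_i$, $h_\omega(r_i(\omega)):=r^*_{i+1}$, and for $x\in[\tilde u^i_n(\omega),\tilde u^i_{n+1}(\omega)]$ put
\[ h_\omega(x)\ :=\ g_{k,l}^{\,n}\!\left(\psi^{\,i-nl}_{\theta^{-n}\omega}\!\left(\varphi_\mathbf{f}(n,\theta^{-n}\omega)^{-1}x\right)\right), \]
with the analogous formula on the $\tilde v$-arcs using $\chi$. Using $\varphi_\mathbf{f}(n,\theta^{-n}\omega)(\tilde u^{i-nl}_0(\theta^{-n}\omega))=\tilde u^i_n(\omega)$, $\varphi_\mathbf{f}(n,\theta^{-n}\omega)(\tilde u^{i-nl}_1(\theta^{-n}\omega))=\tilde u^i_{n+1}(\omega)$ and the matching identities $g_{k,l}^n(u^{*,i-nl}_0)=u^{*,i}_n$, $g_{k,l}^n(u^{*,i-nl}_1)=u^{*,i}_{n+1}$, one checks that $h_\omega$ restricts to an orientation-preserving homeomorphism of each arc onto its counterpart and that the two definitions agree at each shared endpoint; since the domain arcs tile $\,]r_{i-1}(\omega),a_i(\omega)[\,$ and their images tile $\,]r^*_i,a^*_i[\,$ with diameters shrinking to $0$ at both ends, $h_\omega$ restricts to an orientation-preserving homeomorphism $[r_{i-1}(\omega),a_i(\omega)]\to[r^*_i,a^*_i]$, similarly on the $\tilde v$-arcs, and gluing over $i$ yields $h_\omega\in\mathrm{Homeo}^+(\mathbb{S}^1)$. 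For fixed $x$, $\Omega$ splits into countably many measurable sets according to which arc contains $x$, on each of which $h_\omega(x)$ is a measurable expression in $\omega$, so $\omega\mapsto h_\omega(x)$ is measurable.

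Finally, the conjugacy relation $h_{\theta\omega}\circ f_{\alpha_0}=g_{k,l}\circ h_\omega$ is verified piecewise: for $x\in[\tilde u^i_n(\omega),\tilde u^i_{n+1}(\omega)]$ one has $f_{\alpha_0}(x)\in[\tilde u^{i+l}_{n+1}(\theta\omega),\tilde u^{i+l}_{n+2}(\theta\omega)]$, and substituting the defining formula together with the cocycle relation $\varphi_\mathbf{f}(n+1,\theta^{-n}\omega)=\varphi_\mathbf{f}(1,\omega)\circ\varphi_\mathbf{f}(n,\theta^{-n}\omega)$ collapses $h_{\theta\omega}(f_{\alpha_0}(x))$ to $g_{k,l}(h_\omega(x))$; the $\tilde v$-arcs and the special points are treated identically, using $f_{\alpha_0}(a_i(\omega))=a_{i+l}(\theta\omega)$ and $g_{k,l}(a^*_i)=a^*_{i+l}$, and similarly for the $r$'s. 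I expect the main difficulty to lie not in any single estimate but in keeping the combinatorial bookkeeping consistent (cyclic order, the $a/r$ phase shift, and index arithmetic modulo $k$) and in the measurability verifications; it is worth emphasising that the strictness supplied by Corollary~\ref{strict pull} is indispensable here, since otherwise a degenerate domain arc would have to be mapped onto a nondegenerate image arc and $h_\omega$ could not be a homeomorphism.
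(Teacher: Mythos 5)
Your proposal is correct and follows essentially the same route as the paper: establish (via Proposition~\ref{antonov} and the no-symmetry hypothesis in the minimal case, and via Lemma~\ref{pull} otherwise) the strictly monotone two-sided sequences from Corollary~\ref{strict pull} that tile the arcs $]r_{i-1}(\omega),a_i(\omega)[$ and $]a_i(\omega),r_i(\omega)[$, define $h_\omega$ affinely on a fundamental domain arc matched to the corresponding fundamental domain arc of $g_{k_\mathbf{f},l_\mathbf{f}}$ (the paper uses the same midpoints $[\tfrac{4i+1}{4k}]$, $[\tfrac{4i+3}{4k}]$), propagate by the cocycle, glue across the $a_i,r_i$, and verify the conjugacy piecewise. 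The only differences are expository --- you spell out the reduction in the minimal case, the cyclic-order bookkeeping with the $a/r$ phase shift, and the measurability check, all of which the paper leaves implicit --- but the construction is identical.
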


\begin{proof}
We will drop the subscript $_\mathbf{f}$ in $k_\mathbf{f}$ and $l_\mathbf{f}$. Let $\tilde{u}_n^i$, $a_i$ and $r_i$ be as in Corollary~\ref{strict pull}. For $\mathbb{P}$-almost all $\omega$, we will first define $h_\omega$ on $[\tilde{u}_n^i(\omega),\tilde{u}_{n+1}^i(\omega)]$ for $n \in \mathbb{Z}$ and $0 \leq i \leq k-1$. Note that
\[ \varphi_{\mathbf{f}}(-n,\omega)[\tilde{u}_n^i(\omega),\tilde{u}_{n+1}^i(\omega)] \ = \ [\tilde{u}_0^{i-nl}(\theta^{-n}\omega),\tilde{u}_1^{i-nl}(\theta^{-n}\omega)] \]
for all $n \in \mathbb{Z}$. For $\mathbb{P}$-almost all $\omega$, for each $i$, let $h_\omega$ map $[\tilde{u}_0^i(\omega),\tilde{u}_1^i(\omega)]$ linearly onto $[[\frac{4i+1}{4k}],g_{k,0}([\frac{4i+1}{4k}])]$, with the orientation preserved (i.e.\ $h_\omega(\tilde{u}_0^i(\omega))=[\frac{4i+1}{4k}]$). Then, for $\mathbb{P}$-almost all $\omega$, for all $n \in \mathbb{Z}$ and $0 \leq i \leq k-1$, define $h_\omega$ on $[\tilde{u}_n^i(\omega),\tilde{u}_{n+1}^i(\omega)]$ by
\[ h_\omega \big|_{[\tilde{u}_n^i(\omega),\tilde{u}_{n+1}^i(\omega)]} \ := \ g_{k,l}^n \;\circ\; h_{\theta^{-n}\omega} \big|_{[\tilde{u}_0^{i-nl}(\theta^{-n}\omega),\tilde{u}_1^{i-nl}(\theta^{-n}\omega)]} \;\circ\; \varphi_\mathbf{f}(-n,\omega). \]
This definition is indeed consistent at the endpoints of the intervals $[\tilde{u}_n^i(\omega),\tilde{u}_{n+1}^i(\omega)]$, with
\[ h_\omega(\tilde{u}_n^i(\omega)) \ = \ g_{k,0}^n([\tfrac{4i+1}{4k}]) \]
for all $n \in \mathbb{Z}$. Now define $h_\omega(r_i(\omega))=[\frac{i+1}{k}]$ and $h_\omega(a_i(\omega))=[\frac{2i+1}{2k}]$ for $\mathbb{P}$-almost all $\omega$. So $h_\omega$ is continuous on $[r_{i-1}(\omega),a_i(\omega)]$, mapping $[r_{i-1}(\omega),a_i(\omega)]$ bijectively onto $[[\frac{i}{k}],[\frac{2i+1}{2k}]]$.
\\ \\
Now, letting $\tilde{v}_n^i$ be as in Corollary~\ref{strict pull}, we can similarly construct $h_\omega$ on $[a_i(\omega),r_i(\omega)]$ such that
\[ h_\omega([\tilde{v}_{n+1}^i(\omega),\tilde{v}_n^i(\omega)]) \ = \ [ \, g_{k,0}^{n+1}([\tfrac{4i+3}{4k}]) \, , \, g_{k,0}^n([\tfrac{4i+3}{4k}]) \, ] \]
for all $n \in \mathbb{Z}$, $\mathbb{P}$-a.s.. Thus we have constructed $h_\omega$ on the whole of $\mathbb{S}^1$, and one can directly verify that $h_{\theta\omega} \circ \varphi_\mathbf{f}(1,\omega) = g_{k,l} \circ h_\omega$ for $\mathbb{P}$-almost all $\omega$.
\end{proof}

\begin{thm} \label{gendisc}
Suppose we have $k,l,k',l'$ and an $\Omega$-based circle homeomorphism $(h_\omega)$ such that
\[ g_{k,l} \ = \ h_{\theta\omega}^{-1} \ \circ \ g_{k',l'} \ \circ \ h_\omega \]
for $\mathbb{P}$-almost all $\omega$. Then $k=k'$ and $l=l'$.
\end{thm}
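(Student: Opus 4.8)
The plan is to extract from the random conjugacy two \emph{deterministic} invariants of the map $g_{k,l}$: the cardinality of its periodic-point set (which pins down $k$) and a ``rotation class'' (which pins down $l$). The essential preliminary observation is that $\mathrm{Per}(g_{k,l})$ does not depend on $l$: writing $g_{k,l}=g_{k,0}\circ\rho=\rho\circ g_{k,0}$, where $\rho$ is the rotation by $[\tfrac{l}{k}]$ (these commute), one gets $g_{k,l}^k=g_{k,0}^k\circ\rho^k=g_{k,0}^k$; and since $g_{k,0}$ moves every point strictly monotonically on each arc between consecutive zeros of $x\mapsto\sin(2\pi kx)$, it has no new fixed points under iteration, so $\mathrm{Per}(g_{k,l})=\mathrm{Fix}(g_{k,0}^k)=\mathcal{P}_k:=\{[\tfrac{j}{2k}]:0\le j\le 2k-1\}$, a set of $2k$ points on which $g_{k,l}$ acts by the cyclic shift $[\tfrac{j}{2k}]\mapsto[\tfrac{j+2l}{2k}]$.

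\emph{Step 1: the conjugacy carries periodic points to periodic points.} Fix $a\in\mathcal{P}_k=\mathrm{Per}(g_{k,l})$, so $g_{k,l}^k(a)=a$. Put $b(\omega):=h_\omega(a)$, which is measurable; iterating the relation $g_{k,l}=h_{\theta\omega}^{-1}\circ g_{k',l'}\circ h_\omega$ gives $g_{k',l'}^k(b(\omega))=h_{\theta^k\omega}(g_{k,l}^k(a))=b(\theta^k\omega)$ for $\mathbb{P}$-a.e.\ $\omega$. Pushing $\mathbb{P}$ forward under $b$ and using $\theta^k$-invariance of $\mathbb{P}$, the measure $\beta:=b_\ast\mathbb{P}$ is $g_{k',l'}^k$-invariant. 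But $g_{k',l'}^k$ is a circle homeomorphism with a periodic point (e.g.\ any point of $\mathcal{P}_{k'}$, since $g_{k',l'}^{kk'}=g_{k',0}^{kk'}$ fixes $\mathcal{P}_{k'}$), and every invariant probability measure of such a map is supported on its periodic set $\mathrm{Per}(g_{k',l'}^k)=\mathrm{Per}(g_{k',l'})=\mathcal{P}_{k'}$, a finite set. Hence $b(\omega)\in\mathcal{P}_{k'}$ for $\mathbb{P}$-a.e.\ $\omega$. Applying this to each $a$ in the finite set $\mathcal{P}_k$, and then symmetrically to $(h_\omega^{-1})$ (which conjugates $g_{k',l'}$ to $g_{k,l}$), we obtain $h_\omega(\mathcal{P}_k)=\mathcal{P}_{k'}$ for $\mathbb{P}$-a.e.\ $\omega$; since $h_\omega$ is a bijection, $2k=|\mathcal{P}_k|=|\mathcal{P}_{k'}|=2k'$, i.e.\ $k=k'$.

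\emph{Step 2: recovering $l$.} With $k=k'$ we have $\mathcal{P}_k=\mathcal{P}_{k'}$, and $h_\omega$ restricts to an orientation-preserving bijection of this cyclically ordered $2k$-point set, hence to a cyclic shift $h_\omega([\tfrac{j}{2k}])=[\tfrac{j+c(\omega)}{2k}]$ for some measurable $c\colon\Omega\to\mathbb{Z}/2k\mathbb{Z}$. Substituting $x=[\tfrac{j}{2k}]$ into $g_{k,l}=h_{\theta\omega}^{-1}\circ g_{k,l'}\circ h_\omega$ and using the explicit shift action of $g_{k,\cdot}$ on $\mathcal{P}_k$ yields $2l\equiv 2l'+c(\omega)-c(\theta\omega)\pmod{2k}$, so that $\kappa(\omega):=[\tfrac{c(\omega)}{2k}]\in\mathbb{S}^1$ satisfies $\kappa(\theta\omega)=\kappa(\omega)+[\tfrac{l'-l}{k}]$ for $\mathbb{P}$-a.e.\ $\omega$. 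If $l\ne l'$, then since $l,l'\in\{0,\dots,k-1\}$ the constant function $s\equiv[\tfrac{l'-l}{k}]$ is not identically $[0]$, and Corollary~\ref{rot rfp} rules out the existence of such a $\kappa$ --- a contradiction. Hence $l=l'$.

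The conceptual crux is Step~1: recognising that one should test the conjugacy against the deterministic periodic set of $g_{k,l}$ and then push $\mathbb{P}$ forward, thereby converting the cocycle identity $g_{k',l'}^k(b(\omega))=b(\theta^k\omega)$ into an invariance statement for a \emph{single} circle map, which forces $b$ to take values in a finite set. Everything after that --- the cyclic-shift computation on $\mathcal{P}_k$ and the appeal to Corollary~\ref{rot rfp} --- is routine bookkeeping.
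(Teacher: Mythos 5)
Your proof is correct, and Step~1 takes a genuinely different route from the paper's. The paper identifies the deterministic invariant set by looking only at the $k$ repelling points $\mathcal{R}=\{[\frac{i}{k}]\}$ and showing $h_\omega(\mathcal{R})=\mathcal{R}'$ via a topological-dynamical argument: it notes that $\overline{g_{k,l}^n(U_\varepsilon)}$ fills the circle for any neighbourhood $U_\varepsilon$ of $\mathcal{R}$, then uses Poincar\'e recurrence to find a subsequence along which the same expansion holds for $g_{k',l'}^{m_n}$ applied to $h_\omega(U_\varepsilon)$, forcing $\mathcal{R}'\subset\overline{h_\omega(U_\varepsilon)}$. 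Your argument instead uses the full $2k$-point periodic set $\mathcal{P}_k$ and is measure-theoretic: you push $\mathbb{P}$ forward under $\omega\mapsto h_\omega(a)$ to get a $g_{k',l'}^k$-invariant Borel probability measure, then invoke the standard fact that any invariant measure of a circle homeomorphism with rational rotation number is concentrated on its (finite) periodic set. Both devices correctly convert the almost-sure cocycle relation into a constraint on a single deterministic map; your approach avoids the $\varepsilon$-neighbourhood / recurrence manoeuvre at the modest cost of having to verify that $\mathrm{Per}(g_{k,l})=\mathcal{P}_k$. In Step~2 the two arguments are essentially the same: you derive $\kappa(\theta\omega)=\kappa(\omega)+[\tfrac{l'-l}{k}]$ and invoke Corollary~\ref{rot rfp}, whose proof in the rational case is just the $\theta^q$-ergodicity argument that the paper applies directly to $m(\cdot)$; so here you are routing through a previously established lemma rather than doing anything new. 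One small remark: you do not strictly need both directions of $h_\omega(\mathcal{P}_k)=\mathcal{P}_{k'}$ before concluding $k=k'$; injectivity of $h_\omega$ already gives $2k\le 2k'$, and symmetry gives the reverse, after which the bijection is automatic.
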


\begin{proof}
We first show that for $\mathbb{P}$-almost all $\omega$, $h_\omega$ maps $\mathcal{R}:=\{[\frac{i}{k}]\}_{i=0}^{k-1}$ bijectively onto $\mathcal{R}':=\{[\frac{i}{k'}]\}_{i=0}^{k'-1}$ (implying in particular that $k=k'$). For all $n$, we have that
\[ g_{k,l}^n \ = \ h_{\theta^n\omega}^{-1} \ \circ \ g_{k',l'}^n \ \circ \ h_\omega \]
for $\mathbb{P}$-almost all $\omega$. For any $\varepsilon \in (0,\frac{1}{k})$, letting $U_\varepsilon$ be the $\varepsilon$-neighbourhood of $\mathcal{R}$, we have that $\overline{g_{k,l}^n(U_\varepsilon)}$ converges to the whole circle as $n\to\infty$; so by the Poincar\'{e} recurrence theorem, for $\mathbb{P}$-almost all $\omega$ there is a sequence $m_n \to \infty$ such that $\overline{g_{k',l'}^{m_n}(h_\omega(U_\varepsilon))}$ converges to the whole circle as $n\to\infty$, implying in particular that $\mathcal{R}' \subset \overline{h_\omega(U_\varepsilon)}$. This holds for all $\varepsilon \in (0,\frac{1}{k})$, and so $\mathcal{R}' \subset h_\omega(\mathcal{R})$ for $\mathbb{P}$-almost all $\omega$. Applying the same argument to $(h_\omega^{-1})$ gives that $h_\omega(\mathcal{R}) \subset \mathcal{R}'$ for $\mathbb{P}$-almost all $\omega$.
\\ \\
So then, $k=k'$ and for $\mathbb{P}$-almost all $\omega$ there exists $m(\omega) \in \{0,\ldots,k-1\}$ such that $h_\omega([\frac{i}{k}])=[\frac{i+m(\omega)}{k}]$ for all $i$. Note that
\[ m(\theta\omega) \ = \ m(\omega)+(l'-l) \hspace{5mm} \mathrm{mod} \ k \]
and so
\[ m(\theta^k\omega) \ = \ m(\omega)  \hspace{5mm} \mathrm{mod} \ k. \]
Since $\theta^k$ is $\mathbb{P}$-ergodic, it follows that $m(\cdot)$ is constant almost everywhere ($\mathrm{mod} \ k$), and so $l=l'$.
\end{proof}

\noindent Combining Theorem~\ref{genconj} and Theorem~\ref{gendisc}, we obtain Theorem~\ref{MAIN} in the generic case.

\subsection{Degenerate case: preservation of the Antonov classification}

For any $k\in\mathbb{N}$, a \emph{random $k$-periodic point of $\varphi_\mathbf{f}$} is a measurable function $a \colon \Omega \to \mathbb{S}^1$ such that $a(\theta^k\omega)=\varphi_\mathbf{f}(k,\omega)a(\omega)$ for $\mathbb{P}$-almost all $\omega$.

\begin{lemma}
Let $\mathbf{f}$ and $\mathbf{g}$ be random circle homeomorphisms having orientationally conjugate dynamics, and suppose that $\mathbb{S}^1$ is $\mathbf{f}$-minimal. Then $\mathbb{S}^1$ is $\mathbf{g}$-minimal, and:
\begin{enumerate}[\indent (i)]
\item if $\mathbf{f}$ is deterministically orientationally conjugate to a random rotation, then so is $\mathbf{g}$;
\item given $m \geq 2$, if $\mathbf{f}$ admits an order-$m$ symmetry $\tau$ such that the $\tau$-lifts of $\mathbf{f}$ are contractive, then $\mathbf{g}$ also admits an order-$m$ symmetry $\tilde{\tau}$ such that the $\tilde{\tau}$-lifts of $\mathbf{g}$ are contractive.
\end{enumerate}
\end{lemma}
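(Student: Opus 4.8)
Everything is pushed forward along the orientation‑preserving conjugacy $(h_\omega)$, using that $(\mu_\omega)\mapsto(h_{\omega\ast}\mu_\omega)$ is a bijection between the $\varphi_\mathbf f$‑invariant (resp.\ ergodic) measures and the $\varphi_\mathbf g$‑invariant (resp.\ ergodic) measures, and that a homeomorphism sends an $N$‑point uniform probability measure to an $N$‑point uniform probability measure. The plan is first to run the trichotomy of Proposition~\ref{antonov} on $\mathbf f$ (legitimate since $\mathbb S^1$ is $\mathbf f$‑minimal) and to record, in each case, the shape of \emph{all} $\varphi_\mathbf f$‑invariant measures. If $\mathbf f$ is contractive, or admits a symmetry $\tau$ of order $m$ with contractive $\tau$‑lift, then by Lemma~\ref{sym contr}(B) there are exactly two $\varphi_\mathbf f$‑ergodic measures, each with disintegration uniform on an $m$‑point set (with $m=1$ in the first case), so every $\varphi_\mathbf f$‑invariant measure has purely atomic disintegration supported on at most $2m$ points; if instead $\mathbf f$ is deterministically orientationally conjugate to a random rotation, then by Lemma~\ref{rot inv} there is a \emph{unique} $\varphi_\mathbf f$‑invariant measure, with atomless disintegration $\rho_\mathbf f$ of full support. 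All of these are statements about disintegrations only, hence pass verbatim to $\varphi_\mathbf g$.

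\textbf{Minimality of $\mathbf g$.} Suppose $\mathbb S^1$ is not $\mathbf g$‑minimal; let $G$ be the union of the $\mathbf g$‑minimal sets and apply Lemmas~\ref{Li} and~\ref{pull} to $\mathbf g$. These produce $\varphi_\mathbf g$‑invariant measures whose disintegrations are uniform on points lying in $\bigcup_i G_i^\circ$, and, separately, on points lying in $\bigcup_i H_i^\circ$; in the rotation case this already contradicts uniqueness of the $\varphi_\mathbf g$‑invariant measure, giving minimality at once. In the remaining cases one counts $\varphi_\mathbf g$‑ergodic measures: each $\mathbf g$‑minimal set contributes an $\mathcal F_-$‑measurable one, each set $L_i$ of Lemma~\ref{Li} contributes an $\mathcal F_+$‑measurable one, and the two families are disjoint because $\mathbf g$‑stationary and $\mathbf g^{-1}$‑stationary measures are atomless with supports meeting in only finitely many points (Remark~\ref{ergmin}). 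Comparing against "exactly two" forces $\mathbf g$ to have a single minimal set with $\gcd(k_\mathbf g,l_\mathbf g)=1$, and then matching numbers of atoms forces $k_\mathbf g=m$. Ruling out this last "balanced non‑minimal" configuration is the crux: here I would transport the symmetry, setting $\tilde\tau_\omega:=h_\omega\circ\tau\circ h_\omega^{-1}$, which is a measurable family commuting with the $\mathbf g$‑cocycle; it must permute the components $G_0,\dots,G_{k_\mathbf g-1}$, this permutation is $\theta$‑invariant hence constant, and since a non‑trivial finite‑order orientation‑preserving circle homeomorphism has no fixed point, $\tilde\tau_\omega$ must shift the components by a step coprime to $m$ — a structural symmetry incompatible (via Remark~\ref{permrem} and Lemma~\ref{min inv}) with $\mathbb S^1$ failing to be $\mathbf g$‑minimal. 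I expect this step to be the main obstacle, precisely because the measure‑counting bookkeeping does not by itself exclude the balanced configuration and one is forced to exploit the rigidity of finite‑order circle homeomorphisms.

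\textbf{Parts (i) and (ii).} With $\mathbb S^1$ now known to be $\mathbf g$‑minimal, Proposition~\ref{antonov} applies to $\mathbf g$, and the three alternatives for $\mathbf g$ are separated by the invariants already transferred from $\mathbf f$. If $\mathbf f$ is (deterministically orientationally conjugate to) a random rotation, then $\varphi_\mathbf g$ has a unique invariant measure, which rules out the contractive and the symmetry cases for $\mathbf g$ (each having two ergodic measures by Lemma~\ref{sym contr}(B)); hence $\mathbf g$ is also such a random rotation, giving~(i). If $\mathbf f$ admits an order‑$m$ symmetry with contractive $\tau$‑lift ($m\ge 2$), then $\varphi_\mathbf g$ has exactly two ergodic measures with disintegrations uniform on $m$ points; this rules out the rotation case (unique invariant measure) and the bare contractive case (single‑atom disintegrations), so $\mathbf g$ falls under Proposition~\ref{antonov}(b), with a symmetry $\tilde\tau$ of some order $m'$ and contractive $\tilde\tau$‑lift whose attractor has $m'$ distinct $m'$‑th roots; matching the number of atoms of the corresponding ergodic measure gives $m'=m$, which is~(ii).
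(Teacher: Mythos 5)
Your overall skeleton is right: push the Antonov trichotomy through the conjugacy, count ergodic measures and atoms in disintegrations, and argue $\mathbf g$-minimality by contradiction via Lemma~\ref{Li}/\ref{pull}. The rotation case of the minimality argument works as you say, and once minimality is in hand your deduction of (i) and (ii) from the measure counts coincides with the paper's. However, you have correctly identified where the difficulty lies, and your proposed way to resolve it has a genuine gap. First, the claim that the transported symmetry $\tilde\tau_\omega:=h_\omega\circ\tau\circ h_\omega^{-1}$ ``must permute the components $G_0,\dots,G_{k_\mathbf g-1}$'' does not follow: those components are \emph{deterministic} subsets of $\mathbb S^1$, whereas $\tilde\tau_\omega$ is a random homeomorphism with no $\mathcal F_-$-measurability (it involves the full conjugacy $h_\omega$). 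All one can extract is that $\tilde\tau_\omega$ permutes the finite \emph{random} set $\{a_0(\omega),\dots,a_{m-1}(\omega)\}$ (equivalently, that it preserves the corresponding ergodic measure); there is nothing forcing $\tilde\tau_\omega(G_i)=G_{i+c}$ as sets. Second, even granting a constant shift $c$, the cited tools (Remark~\ref{permrem}, Lemma~\ref{min inv}) do not visibly yield a contradiction with non-minimality of $\mathbf g$; the appeal to ``rigidity of finite-order circle homeomorphisms'' is a hope, not an argument, and you say as much.

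The paper closes this gap by an entirely different, and cleaner, device: if $\mathbb S^1$ were not $\mathbf g$-minimal, Lemma~\ref{pull} yields random $k_\mathbf g$-periodic points $a_i,r_i$ of $\varphi_\mathbf g$, and transporting any one of them back by $h_\omega^{-1}$ produces a random $k_\mathbf g$-periodic point of $\varphi_\mathbf f$, i.e.\ a random fixed point of $\varphi_{\mathbf f^{k_\mathbf g}}$. But $\mathbb S^1$ is $\mathbf f^{k_\mathbf g}$-minimal by Proposition~\ref{n minimal}, $\tau$ remains a symmetry of $\mathbf f^{k_\mathbf g}$ with contractive lift, and Lemma~\ref{sym contr}(B) applied to $\mathbf f^{k_\mathbf g}$ shows its only two ergodic measures have $m$-point disintegrations with $m\ge 2$ — hence $\varphi_{\mathbf f^{k_\mathbf g}}$ has no random fixed point, a contradiction. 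This sidesteps the need to analyse the ``balanced'' configuration or to extract structure from $\tilde\tau_\omega$, and is the ingredient your proof is missing.
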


\begin{proof}
If $\mathbf{f}$ is deterministically orientationally conjugate to a random rotation, then by Lemma~\ref{rot inv}, there is a unique $\varphi_\mathbf{f}$-invariant measure; so since conjugacy preserves invariant measures, there is a unique $\varphi_\mathbf{g}$-invariant measure. Hence $\mathbf{g}$ cannot come under the categories addressed by Lemma~\ref{pull} or Lemma~\ref{sym contr}, so (using Proposition~\ref{antonov}) we must have that $\mathbb{S}^1$ is $\mathbf{g}$-minimal with $\mathbf{g}$ being deterministically orientationally conjugate to a random rotation.
\\ \\
Now suppose $\mathbf{f}$ admits an order-$m$ symmetry $\tau$ such that the $\tau$-lifts of $\mathbf{f}$ are contractive. First suppose for a contradiction that $\mathbb{S}^1$ is not $\mathbf{g}$-minimal. Letting $a_i,r_i$ ($0 \leq i \leq k_\mathbf{g}-1$) be as in Lemma~\ref{pull} with reference to $\mathbf{g}$, it is easy to see that $a_i,r_i$ are random $k_\mathbf{g}$-periodic points of $\mathbf{g}$; hence $\varphi_\mathbf{f}$ must also have random $k_\mathbf{g}$-periodic points. However, $\mathbb{S}^1$ is $\mathbf{f}^{k_\mathbf{g}}$-minimal by Proposition~\ref{n minimal}, and $\tau$ is obviously a symmetry of $\mathbf{f}^{k_\mathbf{g}}$; so by Lemma~\ref{sym contr}(B) applied to $\mathbf{f}^{k_\mathbf{g}}$, $\varphi_\mathbf{f}$ has no random $k_\mathbf{g}$-periodic point. Thus we have a contradiction, and so in fact $\mathbb{S}^1$ is $\mathbf{g}$-minimal. Now there are exactly two $\varphi_\mathbf{f}$-ergodic measures (by Lemma~\ref{sym contr}(B)), each consisting of $m$ points in the disintegration; so since conjugacy preserves invariant measures, the same must hold for $\mathbf{g}$. So $\mathbf{g}$ cannot be deterministically orientationally conjugate to a random rotation (Lemma~\ref{rot inv}), so by Proposition~\ref{antonov} and Lemma~\ref{sym contr}(B), $\mathbf{g}$ admits an order-$m$ symmetry $\tilde{\tau}$ such that the $\tilde{\tau}$-lifts of $\mathbf{g}$ are contractive.
\end{proof}

\noindent So to complete the proof of Theorem~\ref{MAIN}, it will be sufficient to prove the following results:

\begin{lemma} \label{hardconj}
Let $\mathbf{f}$ and $\mathbf{g}$ be random circle homeomorphisms such that $\mathbb{S}^1$ is $\mathbf{f}$-minimal and $\mathbf{g}$-minimal, and suppose that $\mathbf{f}$ and $\mathbf{g}$ have orientationally conjugate dynamics.
\begin{enumerate}[\indent (A)]
\item If $\mathbf{f}$ and $\mathbf{g}$ are random rotations, then $\mathbf{f}=\mathbf{g}$.
\item Given $m \geq 2$, if $\tau_m$ is a symmetry of both $\mathbf{f}$ and $\mathbf{g}$, with $\mathbf{F}=(z_m(f_\alpha))$ and $\mathbf{G}=(z_m(f_\alpha))$ being contractive, then $\mathbf{f}$ and $\mathbf{g}$ are deterministically topologically conjugate; moreover, in the case that $m \geq 3$, $\mathbf{f}$ and $\mathbf{g}$ are deterministically orientationally conjugate.
\end{enumerate}
\end{lemma}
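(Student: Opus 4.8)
The plan for part~(A) is a direct measure-rigidity argument. Since $\mathbb{S}^1$ is $\mathbf{f}$-minimal and $\mathbf{f}=(x\mapsto x+s(\alpha))_{\alpha\in\Delta}$ is a random rotation, its unique stationary measure is the Lebesgue (Haar) measure $\lambda$ (Lebesgue being invariant under every rotation, and unique by minimality via Remark~\ref{ergmin}), so by Lemma~\ref{rot inv} the only $\varphi_{\mathbf{f}}$-invariant measure is $\mathbb{P}\otimes\lambda$; likewise, writing $\mathbf{g}=(x\mapsto x+t(\alpha))_{\alpha\in\Delta}$, the only $\varphi_{\mathbf{g}}$-invariant measure is $\mathbb{P}\otimes\lambda$. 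An orientation-preserving conjugacy $(h_\omega)$ from $\varphi_{\mathbf{f}}$ to $\varphi_{\mathbf{g}}$ pushes $\varphi_{\mathbf{f}}$-invariant measures to $\varphi_{\mathbf{g}}$-invariant measures, hence $h_{\omega\ast}\lambda=\lambda$ for $\mathbb{P}$-almost every $\omega$; an orientation-preserving circle homeomorphism preserving $\lambda$ preserves the length of every arc and is therefore a rotation, so $h_\omega=(x\mapsto x+\kappa(\omega))$ for a measurable $\kappa\colon\Omega\to\mathbb{S}^1$ (namely $\kappa(\omega)=h_\omega([0])$). Substituting into \eqref{conjugation} gives $\kappa(\theta\omega)=\kappa(\omega)+\bigl(t(\alpha_0)-s(\alpha_0)\bigr)$ for $\mathbb{P}$-almost every $\omega$, and Corollary~\ref{rot rfp}, applied to the continuous function $\alpha\mapsto t(\alpha)-s(\alpha)$, then forces $t\equiv s$, i.e.\ $\mathbf{f}=\mathbf{g}$.

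For part~(B) I would first record what the hypotheses together with the earlier lemmas yield. By Lemma~\ref{sym contr}(B), the $\varphi_{\mathbf{f}}$-ergodic measures are exactly the uniform measures on the random $\tau_m$-orbits $\mathcal{A}_{\mathbf{f}}(\omega)$ and $\mathcal{R}_{\mathbf{f}}(\omega)$ (the $m$-th roots of the attractor and repeller of $\mathbf{F}$), and in each fibre the points of $\mathcal{A}_{\mathbf{f}}(\omega)$ and $\mathcal{R}_{\mathbf{f}}(\omega)$ interlace around the circle; the same holds for $\mathbf{g}$. Given an orientation-preserving conjugacy $(\tilde h_\omega)$ from $\varphi_{\mathbf{f}}$ to $\varphi_{\mathbf{g}}$, Corollary~\ref{pres attr} gives $\tilde h_\omega(\mathcal{A}_{\mathbf{f}}(\omega))=\mathcal{A}_{\mathbf{g}}(\omega)$ and $\tilde h_\omega(\mathcal{R}_{\mathbf{f}}(\omega))=\mathcal{R}_{\mathbf{g}}(\omega)$ for $\mathbb{P}$-almost every $\omega$; since $\tilde h_\omega$ is an orientation-preserving bijection between two equally spaced $m$-point sets, it commutes with $\tau_m$ on $\mathcal{A}_{\mathbf{f}}(\omega)\cup\mathcal{R}_{\mathbf{f}}(\omega)$.

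The central step is to upgrade $(\tilde h_\omega)$ to a conjugacy commuting with $\tau_m$ on all of $\mathbb{S}^1$, so that it descends through $z_m$. For all $i,j$ the family $(\tau_m^i\circ\tilde h_\omega\circ\tau_m^j)$ is again a conjugacy from $\varphi_{\mathbf{f}}$ to $\varphi_{\mathbf{g}}$ (because $\tau_m$ commutes with every $f_\alpha$ and every $g_\alpha$), so $(\tau_m\circ\tilde h_\omega\circ\tau_m^{-1})\circ\tilde h_\omega^{-1}$ is an orientation-preserving self-conjugacy of $\varphi_{\mathbf{g}}$; such a self-conjugacy fixes the two $\varphi_{\mathbf{g}}$-ergodic measures and hence acts on $\mathcal{A}_{\mathbf{g}}(\omega)$ and $\mathcal{R}_{\mathbf{g}}(\omega)$ by a cyclic rotation which, by the induced cocycle identity and $\theta$-ergodicity, is a single power $\tau_m^{c}$; rigidity of conjugacies in the minimal contractive setting then forces this self-conjugacy to equal $\tau_m^{c}$, and comparing the rotation numbers of $\tau_m$ and $\tau_m^{1-c}$ gives $c=0$. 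Thus $\tilde h_\omega$ commutes with $\tau_m$, and $H_\omega:=z_m(\tilde h_\omega)$ is a well-defined orientation-preserving conjugacy from $\varphi_{\mathbf{F}}$ to $\varphi_{\mathbf{G}}$, with $\mathbf{F}$ and $\mathbf{G}$ contractive and $\mathbb{S}^1$ minimal. Applying Theorem~\ref{genconj} to $\mathbf{F}$ and to $\mathbf{G}$ (which we may take to admit no symmetry, by maximality of $m$) and lifting the resulting conjugacies to $g_{1,0}$ back through $z_m$ — using that $z_m(g_{m,l})=g_{1,0}$ for every $l$ — produces $\tau_m$-equivariant $\Omega$-based circle homeomorphisms $(h^{\mathbf{f}}_\omega)$ and $(h^{\mathbf{g}}_\omega)$ together with measurable $i_{\mathbf{f}},i_{\mathbf{g}}\colon\Omega\to\mathbb{Z}/m\mathbb{Z}$ such that $f_{\alpha_0}=(h^{\mathbf{f}}_{\theta\omega})^{-1}\circ g_{m,i_{\mathbf{f}}(\omega)}\circ h^{\mathbf{f}}_\omega$ and $g_{\alpha_0}=(h^{\mathbf{g}}_{\theta\omega})^{-1}\circ g_{m,i_{\mathbf{g}}(\omega)}\circ h^{\mathbf{g}}_\omega$ for $\mathbb{P}$-almost every $\omega$.

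It then remains to analyse the $\mathbb{Z}/m\mathbb{Z}$-valued cochains $i_{\mathbf{f}},i_{\mathbf{g}}$: comparing the two displays above along the conjugacy $(\tilde h_\omega)$ shows that $i_{\mathbf{f}}-i_{\mathbf{g}}$ is a coboundary, and one then argues — using the connectedness of $\Delta$, the $\mathbf{f}$-minimality of $\mathbb{S}^1$, and $\theta^m$-ergodicity — that $i_{\mathbf{f}}$ and $i_{\mathbf{g}}$ are each cohomologous to a constant, so that $\mathbf{f}$ and $\mathbf{g}$ have conjugate dynamics with $g_{m,l_{\mathbf{f}}}$ and $g_{m,l_{\mathbf{g}}}$ respectively; Theorem~\ref{gendisc} then forces $l_{\mathbf{f}}=l_{\mathbf{g}}$, and the rigidity of conjugacies in the contractive regime allows the random conjugacy between $\mathbf{f}$ and $\mathbf{g}$ to be replaced by a constant one, yielding the deterministic topological conjugacy. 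When $m\geq 3$, every homeomorphism commuting with $\tau_m$ is orientation-preserving, so this conjugacy is automatically orientation-preserving; when $m=2$, the map $z_2$ can introduce orientation reversal, which is precisely why only topological (not orientational) conjugacy is asserted there. I expect this final circle of ideas — controlling the $\mathbb{Z}/m\mathbb{Z}$-cocycle, pinning down the rigidity of self-conjugacies in the minimal contractive setting, and converting conjugate dynamics of the deterministic model maps into a genuinely deterministic conjugacy of $\mathbf{f}$ and $\mathbf{g}$ — to be the main obstacle.
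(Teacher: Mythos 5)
Your part~(A) is correct and takes a genuinely different, simpler route than the paper's. You observe directly that the unique $\mathbf f$- (resp.\ $\mathbf g$-) stationary measure is Lebesgue, so by Lemma~\ref{rot inv} the only $\varphi_{\mathbf f}$- and $\varphi_{\mathbf g}$-invariant measure is $\mathbb P\otimes\lambda$; since conjugacy preserves invariant measures, $h_{\omega\ast}\lambda=\lambda$ almost surely, and an orientation-preserving circle homeomorphism preserving Lebesgue measure is a rotation, so $h_\omega=R_{\kappa(\omega)}$ with $\kappa$ measurable, after which Corollary~\ref{rot rfp} finishes. The paper instead develops a quotient construction on $\mathrm{Homeo}^+(\mathbb S^1)$ modulo two-sided composition by rotations (Lemma~\ref{quotient}), proves that the class $\hat h_\omega$ is almost everywhere constant, and then invokes a measurable selection theorem to extract the additive cocycle. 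Your observation that $h_\omega$ itself must be a rotation bypasses both the quotient lemma and the selection step, and is an improvement.

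For part~(B) you have the right overall shape (reduce to a conjugacy commuting with $\tau_m$, descend through $z_m$, apply Theorems~\ref{genconj} and~\ref{gendisc}, control a $\mathbb Z/m\mathbb Z$-cocycle, and finally pass from conjugate dynamics to a deterministic conjugacy), but there are two genuine gaps, both of which you flag yourself as ``the main obstacle'' without resolving. First, after showing that the self-conjugacy $S_\omega=(\tau_m\circ\tilde h_\omega\circ\tau_m^{-1})\circ\tilde h_\omega^{-1}$ permutes $\mathcal A_{\mathbf g}(\omega)$ by a fixed power $\tau_m^c$, you assert that ``rigidity of conjugacies in the minimal contractive setting'' forces $S_\omega=\tau_m^c$ on all of $\mathbb S^1$. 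This is unproved, and knowing that $S_\omega$ preserves the two $m$-point sets $\mathcal A_{\mathbf g}(\omega)$ and $\mathcal R_{\mathbf g}(\omega)$ setwise says nothing about what it does on the arcs between them. The paper sidesteps this entirely: Lemma~\ref{sym of conj} does not try to show $\tilde h_\omega$ commutes with $\tau_m$, but instead manufactures a \emph{new} conjugacy $(h_\omega)$ commuting with $\tau_m$ by lifting a $z_m$-level conjugacy from Theorem~\ref{genconj}, pinning down the $m$-fold ambiguity of the lift using the attractor values carried by the given $\tilde h_\omega$.

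Second, your final step asserts, without proof, that the $\mathbb Z/m\mathbb Z$-valued cochains $i_{\mathbf f}$, $i_{\mathbf g}$ are cohomologous to constants and that a random conjugacy between $\mathbf f$ and $\mathbf g$ can then be ``replaced by a constant one''. This is precisely where the paper's heavy lifting occurs: it shows that the shift data $\tilde T^-$, $\tilde T^+$ (recording by which power of $\tau_m$ the $\tau_m$-commuting conjugacy moves the attractor and repeller $\tau_m$-orbits) are measurable with respect to the $\mathbb P$-completions of $\mathcal F_-$ and $\mathcal F_+$ respectively; then uses the combinatorial Lemmas~\ref{vect}, \ref{triangle} and Lemma~\ref{redund} to conclude that the random pair $(A_{\mathbf f}(\omega^-),A_{\mathbf g}(\omega^-))$ lies on the graph of a single homeomorphism $K$ (Corollary~\ref{zk conj}), which is then seen to be a deterministic conjugacy from $\mathbf F$ to $\mathbf G$; and finally performs a case analysis on the orientation of $K$, with the $m\geq 3$ orientation-reversing case eliminated by contradiction via Lemma~\ref{contradict}. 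Your sketch contains no substitute for this chain of arguments.
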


\begin{lemma} \label{revconj}
Let $\mathbf{f}$ be a random circle homeomorphism such that $\mathbb{S}^1$ is $\mathbf{f}$-minimal, and suppose that $\tau_2$ is a symmetry of $\mathbf{f}$ with $\mathbf{F}=(z_2(f_\alpha))$ being contractive. Let $j \in \mathrm{Homeo}^-(\mathbb{S}^1)$ be an orientation-reversing homeomorphism such that $\tau_2$ is also a symmetry of the random circle homeomorphism $\mathbf{g}:=(j \circ f_\alpha \circ j^{-1})$. Then $\mathbf{f}$ and $\mathbf{g}$ have orientationally conjugate dynamics.
\end{lemma}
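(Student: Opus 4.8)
The plan is to reduce the lemma to the construction of an orientation-reversing \emph{self-conjugacy} of $\varphi_\mathbf{f}$, and then to build such a self-conjugacy by transporting the mirror symmetry of the model map $g_{1,0}$ through the contractive lift $\mathbf{F}$. Since $\mathbf{g}=(j\circ f_\alpha\circ j^{-1})$, the constant family $(j)_{\omega\in\Omega}$ is trivially an orientation-reversing conjugacy from $\varphi_\mathbf{f}$ to $\varphi_\mathbf{g}$; consequently, if $(\ell_\omega)$ is any $\Omega$-indexed family of orientation-reversing homeomorphisms with $\omega\mapsto\ell_\omega(x)$ measurable and $f_{\alpha_0}\circ\ell_\omega=\ell_{\theta\omega}\circ f_{\alpha_0}$ for $\mathbb{P}$-almost every $\omega$, then $(j\circ\ell_\omega)$ is an orientation-preserving conjugacy from $\varphi_\mathbf{f}$ to $\varphi_\mathbf{g}$. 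So it suffices to produce such a family $(\ell_\omega)$. (Note that for this reduction the hypothesis that $\tau_2$ is a symmetry of $\mathbf{g}$ is not actually needed.)

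To build $(\ell_\omega)$, I would first observe that $\mathbf{F}=(z_2(f_\alpha))$ is a $\tau_2$-lift of $\mathbf{f}$, so $\mathbb{S}^1$ is $\mathbf{F}$-minimal; in particular $\mathbf{F}$ is a genuine random circle homeomorphism (continuity is inherited from $\mathbf{f}$, and condition (ii) holds because any non-dense sequence with $F_\alpha(x_n)=x_{n+1}$ for all $\alpha,n$ would have non-dense $\mathbf{F}$-invariant closure), and it has a unique stationary measure by Remark~\ref{ergmin}. Writing $A$ and $R$ for the attractor and repeller of $\mathbf{F}$ (Definition~\ref{contractive}), any symmetry $\sigma$ of $\mathbf{F}$ sends stationary measures to stationary measures, hence must fix $A(\omega)$ for a.e.\ $\omega$, and — arguing with $\mathbf{F}^{-1}$, which is minimal by Lemma~\ref{min inv} — must fix $R(\omega)$ as well; but a finite-order orientation-preserving circle homeomorphism with a fixed point is the identity, so $\mathbf{F}$ admits no symmetry. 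Therefore Theorem~\ref{genconj} applies to $\mathbf{F}$ (here $k_\mathbf{F}=1$, $l_\mathbf{F}=0$): there is an $\Omega$-based circle homeomorphism $(H_\omega)$ with $F_{\alpha_0}=H_{\theta\omega}^{-1}\circ g_{1,0}\circ H_\omega$ a.e., and by the construction in the proof of Theorem~\ref{genconj} one has $H_\omega(A(\omega))=[\tfrac12]$ and $H_\omega(R(\omega))=[0]$. Since $g_{1,0}$ equals its own mirror image, the involution $j_0([x]):=[-x]$ commutes with $g_{1,0}$ and fixes $[0]$ and $[\tfrac12]$; hence $(L_\omega):=(H_\omega^{-1}\circ j_0\circ H_\omega)$ is an orientation-reversing self-conjugacy of $\varphi_\mathbf{F}$ with $L_\omega(A(\omega))=A(\omega)$ and $L_\omega(R(\omega))=R(\omega)$ for a.e.\ $\omega$.

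Finally I would lift $(L_\omega)$ through $z_2$. For a.e.\ $\omega$ fix a measurable choice $a_1(\omega)$ of one of the two preimages of $A(\omega)$ under $x\mapsto 2x$, and set $a_2(\omega):=\tau_2(a_1(\omega))$. Because $z_2$ is surjective with fibres $\{f,\tau_2\circ f\}$ (Remark~\ref{tmrem}), there is for a.e.\ $\omega$ a unique homeomorphism $\ell_\omega$ commuting with $\tau_2$ such that $z_2(\ell_\omega)=L_\omega$ and $\ell_\omega(a_1(\omega))=a_1(\omega)$ (of the two lifts of $L_\omega$, exactly one fixes $a_1(\omega)$, since both send it into $\{a_1(\omega),a_2(\omega)\}$); this $\ell_\omega$ is orientation-reversing, as $z_2$ preserves orientation type, and depends measurably on $\omega$. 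Applying $z_2$ and using that $(L_\omega)$ is a self-conjugacy of $\varphi_\mathbf{F}$ gives $z_2(f_{\alpha_0}\circ\ell_\omega)=z_2(\ell_{\theta\omega}\circ f_{\alpha_0})$, hence $f_{\alpha_0}\circ\ell_\omega=\tau_2^{\varepsilon(\omega)}\circ\ell_{\theta\omega}\circ f_{\alpha_0}$ for some measurable $\varepsilon\colon\Omega\to\{0,1\}$. Evaluating at $a_1(\omega)$: the left side is $f_{\alpha_0}(a_1(\omega))$, which lies in $\{a_1(\theta\omega),a_2(\theta\omega)\}$ since its image under $x\mapsto 2x$ is $F_{\alpha_0}(A(\omega))=A(\theta\omega)$; and as $\ell_{\theta\omega}$ fixes $a_1(\theta\omega)$ and commutes with $\tau_2$ it also fixes $a_2(\theta\omega)$, so the right side equals $\tau_2^{\varepsilon(\omega)}(f_{\alpha_0}(a_1(\omega)))$. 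Since $\tau_2$ is fixed-point-free, $\varepsilon(\omega)=0$. Thus $(\ell_\omega)$ is an orientation-reversing self-conjugacy of $\varphi_\mathbf{f}$, and $(j\circ\ell_\omega)$ is the required orientation-preserving conjugacy.

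The main obstacle is precisely the lifting step in the last paragraph: the $z_2$-preimage of $L_\omega$ is only determined up to composition with $\tau_2$, so the self-conjugacy relation lifts only up to a $\mathbb{Z}/2$-valued cocycle $\tau_2^{\varepsilon(\cdot)}$, which in general need not be a coboundary. The device that makes it work is to pin the lifts down canonically — forcing $\ell_\omega$ to fix a preselected preimage of the attractor $A(\omega)$ — which, via the evaluation at $a_1(\omega)$ above, forces the cocycle to vanish outright. The remaining points (measurability of the various selections, continuity of $z_2$, and the elementary facts about orientation types and fixed points) are routine.
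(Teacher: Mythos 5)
Your proof is correct, but it takes a genuinely different route from the paper's. The paper dispatches Lemma~\ref{revconj} as a one-liner: it verifies that the constant family $\tilde{h}_\omega = j$ satisfies the hypotheses of Lemma~\ref{sym of conj} when $m=2$ (using that $[\tfrac{-i}{2}]=[\tfrac{i}{2}]$, which is precisely why the conclusion fails for $m\geq 3$), and invokes that lemma directly. You instead bypass Lemma~\ref{sym of conj} and build the conjugacy by hand: you reduce the problem to constructing an orientation-reversing \emph{self}-conjugacy $(\ell_\omega)$ of $\varphi_\mathbf{f}$, produce an orientation-reversing self-conjugacy $(L_\omega)$ of $\varphi_\mathbf{F}$ by transporting the mirror involution $j_0$ of the model map $g_{1,0}$ through the Theorem~\ref{genconj} conjugacy $(H_\omega)$, and then lift $(L_\omega)$ through $z_2$, pinning the lift so that $\ell_\omega$ fixes a chosen preimage $a_1(\omega)$ of the attractor, and checking that the resulting $\mathbb{Z}/2$-valued cocycle vanishes by evaluating there. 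That pin-and-evaluate mechanism is exactly the engine inside the paper's proof of Lemma~\ref{sym of conj}, so the core technical idea is shared; the difference is one of packaging (self-conjugacy followed by composition with $j$ versus directly feeding $j$ as an abstract ``partial conjugacy'' $\tilde{h}_\omega$ into Lemma~\ref{sym of conj}). What the paper's route buys is brevity, since Lemma~\ref{sym of conj} is needed anyway for Lemma~\ref{hardconj}(B); what your route buys is self-containedness and a clean conceptual picture, and — as you correctly note — it dispenses with the hypothesis that $\tau_2$ be a symmetry of $\mathbf{g}$, which the paper's application of Lemma~\ref{sym of conj} does use. Two small remarks: your observation that ``$\sigma$ sends stationary measures to stationary measures, hence fixes $A(\omega)$'' is slightly compressed — what is really needed is that $\sigma$ permutes the two $\varphi_\mathbf{F}$-ergodic measures $(\delta_{A(\omega)})$ and $(\delta_{R(\omega)})$ and, being deterministic, must preserve their respective $\mathcal{F}_-$- and $\mathcal{F}_+$-measurability, so it fixes each (Lemma~\ref{sym contr}(B) with $m=1$); and your reading of the proof of Theorem~\ref{genconj}, giving $H_\omega(A(\omega))=[\tfrac{1}{2}]$ and $H_\omega(R(\omega))=[0]$ when $k=1$, is correct and necessary, since $g_{1,0}$ viewed as a constant random map fails the non-degeneracy condition and so Corollary~\ref{pres attr} cannot be invoked in its place.
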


\subsection{Proof of Lemma~\ref{hardconj}(A)}

For any $c \in \mathbb{S}^1$, define $R_c \colon \mathbb{S}^1 \to \mathbb{S}^1$ to be the rotation $R_c(x)=x+c$. Define the equivalence relation $\sim$ on $\mathrm{Homeo}^+(\mathbb{S}^1)$ by
\[ f \sim g \hspace{3mm} \Longleftrightarrow \hspace{3mm} \exists \, a,b \in \mathbb{S}^1 \textrm{ s.t.~} g \, = \, R_a \circ f \circ R_{b\,}. \]
\noindent Let $\mathcal{I}$ be the set of all equivalence classes of $\sim\,$. For any $f \in \mathrm{Homeo}^+(\mathbb{S}^1)$, let $\hat{f} \in \mathcal{I}$ be the equivalence class represented by $f$. We will say that a set $A \subset \mathrm{Homeo}^+(\mathbb{S}^1)$ is \emph{$\sim$-invariant} if for every $f \in A$, we have $\hat{f} \subset A$.

\begin{lemma} \label{quotient}
Let $\mathcal{P}$ be a Borel probability measure on $\mathrm{Homeo}^+(\mathbb{S}^1)$. If $\mathcal{P}(A) \in \{0,1\}$ for every $\sim$-invariant open set $A \subset \mathrm{Homeo}^+(\mathbb{S}^1)$, then there exists $\xi \in \mathcal{I}$ such that $\mathcal{P}(\xi)=1$.
\end{lemma}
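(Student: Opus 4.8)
The plan is to recognise that $\sim$ is exactly the orbit equivalence relation of the continuous action of the \emph{compact} group $\mathbb{S}^1\times\mathbb{S}^1$ on $\mathrm{Homeo}^+(\mathbb{S}^1)$ given by $(a,b)\cdot f = R_a\circ f\circ R_b$, and then to run the standard argument that a Borel probability measure on a separable metric space which is $0$-$1$ on open sets must be a point mass---carried out within the lattice of $\sim$-invariant open sets. Two structural facts underpin this: (i) there is a metric $d$ on $\mathrm{Homeo}^+(\mathbb{S}^1)$, compatible with the topology, that is invariant under the action, e.g.\ $d(f,g)=\sup_{x\in\mathbb{S}^1}d_{\mathbb{S}^1}(f(x),g(x))$; and (ii) each equivalence class $\hat f=\{R_a\circ f\circ R_b:a,b\in\mathbb{S}^1\}$ is compact, being the image of $\mathbb{S}^1\times\mathbb{S}^1$ under the continuous map $(a,b)\mapsto R_a\circ f\circ R_b$, hence closed. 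I expect fact (i) to be the only point requiring genuine care: it holds because rotations are isometries of $\mathbb{S}^1$ and $d_{\mathbb{S}^1}$ is translation-invariant, so both $d(R_a\circ\phi\circ R_b, R_a\circ\psi\circ R_b)=d(\phi,\psi)$ (and likewise for the inverse term, if one uses a metric involving inverses). Its payoff is that for any $\sim$-invariant set $E$, the function $\phi\mapsto\mathrm{dist}(\phi,E)$ is again $\sim$-invariant, so the open $\varepsilon$-neighbourhood $\{\phi:\mathrm{dist}(\phi,E)<\varepsilon\}$ is a $\sim$-invariant open set; this is what lets us separate distinct equivalence classes by disjoint $\sim$-invariant open sets.

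Next I would build a ``support''. Fix a countable base $\{B_k\}_{k\in\mathbb{N}}$ of $\mathrm{Homeo}^+(\mathbb{S}^1)$; for each $k$ the saturation $\mathrm{sat}(B_k):=\bigcup_{a,b}R_a\circ B_k\circ R_b$ is a $\sim$-invariant open set, so $\mathcal{P}(\mathrm{sat}(B_k))\in\{0,1\}$ by hypothesis. Let $N$ be the union of those $\mathrm{sat}(B_k)$ with $\mathcal{P}(\mathrm{sat}(B_k))=0$; then $\mathcal{P}(N)=0$, so $S:=\mathrm{Homeo}^+(\mathbb{S}^1)\setminus N$ is a nonempty closed $\sim$-invariant set with $\mathcal{P}(S)=1$. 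The key observation is that every $\sim$-invariant open set $W$ meeting $S$ satisfies $\mathcal{P}(W)=1$: pick $f\in W\cap S$ and a basic set $B_k$ with $f\in B_k\subseteq W$; then $\mathrm{sat}(B_k)\subseteq\mathrm{sat}(W)=W$, and $\mathcal{P}(\mathrm{sat}(B_k))$ cannot be $0$ (otherwise $f\in\mathrm{sat}(B_k)\subseteq N$, contradicting $f\in S$), so it equals $1$, whence $\mathcal{P}(W)=1$.

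Finally I would show $S$ consists of a single equivalence class. Suppose $f,g\in S$ lie in distinct classes. Then $\hat f$ and $\hat g$ are disjoint compact sets, so $\delta:=\mathrm{dist}(\hat f,\hat g)>0$, and $U:=\{\phi:\mathrm{dist}(\phi,\hat f)<\delta/3\}$ and $V:=\{\phi:\mathrm{dist}(\phi,\hat g)<\delta/3\}$ are disjoint $\sim$-invariant open sets, with $f\in U\cap S$ and $g\in V\cap S$; by the previous step $\mathcal{P}(U)=\mathcal{P}(V)=1$, contradicting $U\cap V=\emptyset$. Hence all points of $S$ are $\sim$-equivalent, and since $S$ is nonempty and $\sim$-invariant we get $S=\hat f$ for any $f\in S$; taking $\xi:=\hat f\in\mathcal{I}$ gives $\mathcal{P}(\xi)=\mathcal{P}(S)=1$, as required. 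Apart from verifying the two-sided rotation-invariance of the chosen metric (step one), every ingredient here is routine: the compactness of orbits is immediate from compactness of $\mathbb{S}^1\times\mathbb{S}^1$, and the rest is the usual support argument for $0$-$1$ measures.
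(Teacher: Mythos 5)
Your proof is correct but takes a genuinely different route from the paper's. The paper argues directly by contradiction: supposing no orbit carries full measure, it picks $f_0\in\mathrm{supp}\,\mathcal{P}$ and $h\in(\mathrm{supp}\,\mathcal{P})\setminus\hat f_0$, and uses only compactness of $\mathbb{S}^1\times\mathbb{S}^1$ together with continuity of the group operations on $\mathrm{Homeo}^+(\mathbb{S}^1)$ (a tube-lemma argument) to produce neighbourhoods $U\ni f_0$ and $V\ni h$ with $V\cap\hat f=\emptyset$ for every $f\in U$; the saturation $A=\bigcup_{f\in U}\hat f$ is then a $\sim$-invariant open set with $0<\mathcal{P}(A)<1$, and that is the whole proof --- no metric enters. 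You instead invoke an extra structural fact, namely that the action of the compact group $\mathbb{S}^1\times\mathbb{S}^1$ admits an invariant compatible metric, and use it to form $\sim$-invariant distance functions to compact orbits; you also build an explicit saturated support $S$ from a countable base rather than using $\mathrm{supp}\,\mathcal{P}$ outright, and then separate two putatively distinct classes in $S$ by disjoint $\sim$-invariant open $\varepsilon$-neighbourhoods. All of this is sound: the metric $d(f,g)=\sup_{x}d_{\mathbb{S}^1}(f(x),g(x))$ is invariant under pre- and post-composition by rotations exactly as you say, and it does generate the topology of $\mathrm{Homeo}^+(\mathbb{S}^1)$, since on the homeomorphism group of a compact metric space the uniform topology already makes inversion continuous. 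What your approach buys is generality --- it is the standard ``$0$--$1$ on a countable base implies point mass'' argument transported to the orbit quotient, and it would apply verbatim to any continuous action of a compact metrizable group on a separable metrizable space carrying an invariant metric. What it costs is the extra verification (invariance and topology-compatibility of the chosen metric) that you rightly flagged; the paper's shorter argument avoids this entirely by working purely with the topological-group structure and a single compactness step at the supports.
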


\begin{proof}
Since $\mathrm{Homeo}^+(\mathbb{S}^1)$ is a topological group, the map $(a,b,f) \mapsto R_a \circ f \circ R_b$ is continuous. Hence in particular, each element of $\mathcal{I}$ is a compact subset of $\mathrm{Homeo}^+(\mathbb{S}^1)$. Now suppose there does not exist $\xi \in \mathcal{I}$ such that $\mathcal{P}(\xi)=1$. Fix any $f_0 \in \mathrm{supp}\,\mathcal{P}$. Let $h$ be an element of $(\mathrm{supp}\,\mathcal{P}) \setminus \hat{f}_0$. Since $\mathbb{S}^1$ is compact and $R_a \circ f_0 \circ R_b \neq h$ for all $a,b \in \mathbb{S}^1$, there are neighbourhoods $U$ of $f_0$ and $V$ of $h$ such that $h' \nin \hat{f}$ for all $h' \in V$ and $f \in U$. Note that the set $A:=\bigcup_{f \in U} \hat{f}$ is open, since the map $f \mapsto R_a \circ f \circ R_b$ is a homeomorphism of $\mathrm{Homeo}^+(\mathbb{S}^1)$ for each $a,b \in \mathbb{S}^1$. Obviously $A$ is $\sim$-invariant; but since both $f_0$ and $h$ are in $\mathrm{supp}\,\mathcal{P}$, we have that $\mathcal{P}(A) \nin \{0,1\}$.
\end{proof}

\noindent Now write $\mathbf{f}=(R_{s(\alpha)})$ and $\mathbf{g}=(R_{s'(\alpha)})$, and let $(h_\omega)$ be an orientation-preserving conjugacy from $\varphi_\mathbf{f}$ to $\varphi_\mathbf{g}$. So
\begin{equation} \label{rot conj}
R_{s(\alpha_0)} \ = \ h_{\theta\omega}^{-1} \,\circ\, R_{s'(\alpha_0)} \,\circ\, h_\omega
\end{equation}
\noindent for $\mathbb{P}$-almost all $\omega \in \Omega$. Hence $\hat{h}_{\theta\omega}=\hat{h}_\omega$ for $\mathbb{P}$-almost all $\omega \in \Omega$. So since $\mathbb{P}$ is $\theta$-ergodic, by Lemma~\ref{quotient} there exists $\xi \in \mathcal{I}$ such that for $\mathbb{P}$-almost all $\omega \in \Omega$, $\hat{h}_\omega=\xi$. Let $\tilde{\Omega}:=\{\omega:\hat{h}_\omega=\xi\}$. Fix an arbitrary $f \in \xi$. By a suitable ``measurable selection'' theorem (e.g.~\cite[Corollary~A.6]{MR776417}), there exist measurable functions $a,b \colon \tilde{\Omega} \to \mathbb{S}^1$ such that for all $\omega \in \tilde{\Omega}$, $h_\omega=R_{a(\omega)} \circ f \circ R_{b(\omega)}$. With this, (\ref{rot conj}) can be rearranged as
\[ R_{s(\alpha_0) \,+\, b(\theta\omega) \,-\, b(\omega)} \ = \ f^{-1} \,\circ\, R_{- a(\theta\omega) \,+\, s'(\alpha_0) \,+\, a(\omega)} \,\circ\, f. \]
\noindent Hence, for $\mathbb{P}$-almost all $\omega \in \tilde{\Omega}$,
\[ s(\alpha_0) + b(\theta\omega) - b(\omega) \ = \ - a(\theta\omega) + s'(\alpha_0) + a(\omega) \]
\noindent and so
\[ a(\theta\omega)+b(\theta\omega) \ = \  s'(\alpha_0) - s(\alpha_0) \ + \ a(\omega)+b(\omega). \]
\noindent Hence, by Corollary~\ref{rot rfp} with $\kappa=a+b$, we have that $s'-s=[0]$ everywhere, i.e.\ $s=s'$.

\subsection{Symmetrisation of conjugacy and the proof of Lemma~\ref{revconj}}

The following lemma immediately gives the proof of Lemma~\ref{revconj}, and will also be used in the proof of Lemma~\ref{hardconj}(B).

\begin{lemma} \label{sym of conj}
Fix $m \in \mathbb{N}$. Let $\mathbf{f}$ and $\mathbf{g}$ be random circle homeomorphisms such that $\mathbb{S}^1$ is $\mathbf{f}$-minimal and $\mathbf{g}$-minimal. Suppose that $\tau_m$ is a symmetry of both $\mathbf{f}$ and $\mathbf{g}$, and $\mathbf{F}=(z_m(f_\alpha))$ and $\mathbf{G}=(z_m(g_\alpha))$ are contractive, with attractors $A_\mathbf{f}$ and $A_\mathbf{g}$ respectively. For each $\omega$, let $a_\mathbf{f}(\omega)$ (resp.\ $a_\mathbf{g}(\omega)$) be the unique value in $[[0],[\frac{1}{m}][$ such that $ma_\mathbf{f}(\omega)=A_\mathbf{f}(\omega)$ (resp.\ $ma_\mathbf{g}(\omega)=A_\mathbf{g}(\omega)$). Suppose there is an $\Omega$-indexed family $(\tilde{h}_\omega)$ of functions $\tilde{h}_\omega \colon \mathbb{S}^1 \to \mathbb{S}^1$ (\emph{not} necessarily in $\mathrm{Homeo}^+(\mathbb{S}^1)$) such that $(\omega,x) \mapsto \tilde{h}_\omega(x)$ is measurable and for $\mathbb{P}$-almost every $\omega$:
\begin{itemize}
\item there is an integer $n_\omega$ such that for each $0 \leq i \leq m-1$,
\[ \tilde{h}_\omega(a_\mathbf{f}(\omega) + [\tfrac{i}{m}]) \ = \ a_\mathbf{g}(\omega) + [\tfrac{n_\omega+i}{m}] \, ; \]
\item $\tilde{h}_{\theta\omega}(f_{\alpha_0}(a_\mathbf{f}(\omega))) \ = \ g_{\alpha_0}(\tilde{h}_\omega(a_\mathbf{f}(\omega)))$.
\end{itemize}
Then there exists an orientation-preserving conjugacy $(h_\omega)$ from $\varphi_\mathbf{f}$ to $\varphi_\mathbf{g}$ such that $h_\omega$ commutes with $\tau_m$ for all $\omega$.
\end{lemma}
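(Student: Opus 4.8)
The plan is to descend to the quotient circle on which $\mathbf{F}=(z_m(f_\alpha))$ and $\mathbf{G}=(z_m(g_\alpha))$ act, conjugate each of $\mathbf{F},\mathbf{G}$ to the standard model $g_{1,0}$ using the generic‑case theorem, and then use the family $(\tilde h_\omega)$ to single out the correct $\tau_m$‑equivariant lift of the resulting quotient conjugacy. First I would verify that $\mathbf{F}$ and $\mathbf{G}$ are themselves random circle homeomorphisms: joint continuity of $(\alpha,y)\mapsto z_m(f_\alpha)(y)$ comes from local continuous sections of $x\mapsto mx$, and non‑degeneracy condition~(ii) holds because $\mathbb{S}^1$ is $\mathbf{F}$‑minimal (if some non‑dense $(x_n)$ had $F_\alpha(x_n)=x_{n+1}$ for all $\alpha,n$, then $\overline{\{x_n\}}$ would be a non‑empty proper closed $\mathbf{F}$‑invariant set). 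Moreover $\mathbf{F}$ admits no symmetry, since a symmetry would have to fix the unique attractor $A_\mathbf{f}$ of the contractive map $\mathbf{F}$, which a fixed‑point‑free finite‑order homeomorphism cannot do. Hence Theorem~\ref{genconj} (with $k_\mathbf{F}=1$, $l_\mathbf{F}=0$) provides an $\Omega$‑based circle homeomorphism $(H^\mathbf{f}_\omega)$ with $z_m(f_{\alpha_0})=(H^\mathbf{f}_{\theta\omega})^{-1}\circ g_{1,0}\circ H^\mathbf{f}_\omega$ a.s., and inspecting its construction one has $H^\mathbf{f}_\omega(A_\mathbf{f}(\omega))=[\tfrac{1}{2}]$ (the attractor of $g_{1,0}$) a.s.; likewise $(H^\mathbf{g}_\omega)$ for $\mathbf{G}$. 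Then $(H_\omega):=\bigl((H^\mathbf{g}_\omega)^{-1}\circ H^\mathbf{f}_\omega\bigr)$ is an orientation‑preserving conjugacy from $\varphi_\mathbf{F}$ to $\varphi_\mathbf{G}$ with $H_\omega(A_\mathbf{f}(\omega))=A_\mathbf{g}(\omega)$ for $\mathbb{P}$‑a.e.\ $\omega$.

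Next I would lift $(H_\omega)$. Since $H_\omega$ is orientation‑preserving it lies in the image of $z_m$ (Remark~\ref{tmrem}), and the lifts of $H_\omega$ form an orbit under left‑composition by $\tau_m$; for a.e.\ $\omega$, $H_\omega$ carries $A_\mathbf{f}(\omega)$ to $A_\mathbf{g}(\omega)$, so there is a unique lift $\tilde h^{(0)}_\omega$ with $\tilde h^{(0)}_\omega(a_\mathbf{f}(\omega))=a_\mathbf{g}(\omega)$, and standard measurable‑selection considerations make $(\omega,x)\mapsto\tilde h^{(0)}_\omega(x)$ measurable; being a lift of an orientation‑preserving homeomorphism, $\tilde h^{(0)}_\omega$ is orientation‑preserving. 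The integer $n_\omega$ supplied by the hypothesis depends measurably on $\omega$ (it is read off from $\tilde h_\omega(a_\mathbf{f}(\omega))$ and $a_\mathbf{g}(\omega)$), so I set $h_\omega:=\tau_m^{\,n_\omega}\circ\tilde h^{(0)}_\omega$ on the full‑measure set where all of the above holds, and $h_\omega:=\mathrm{id}_{\mathbb{S}^1}$ elsewhere. Then $(\omega,x)\mapsto h_\omega(x)$ is measurable, each $h_\omega$ is an orientation‑preserving homeomorphism commuting with $\tau_m$, and for a.e.\ $\omega$ one has $h_\omega\bigl(a_\mathbf{f}(\omega)+[\tfrac{i}{m}]\bigr)=a_\mathbf{g}(\omega)+[\tfrac{n_\omega+i}{m}]=\tilde h_\omega\bigl(a_\mathbf{f}(\omega)+[\tfrac{i}{m}]\bigr)$ for all $i$.

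It then remains to check $f_{\alpha_0}=h_{\theta\omega}^{-1}\circ g_{\alpha_0}\circ h_\omega$ a.s. As $h_\omega$, $g_{\alpha_0}$ and $h_{\theta\omega}^{-1}$ all commute with $\tau_m$, applying the homomorphism $z_m$ together with the conjugacy relation for $(H_\omega)$ gives $z_m(h_{\theta\omega}^{-1}\circ g_{\alpha_0}\circ h_\omega)=z_m(f_{\alpha_0})$; so by Remark~\ref{tmrem} there is a measurable $e\colon\Omega\to\{0,\dots,m-1\}$ with $h_{\theta\omega}^{-1}\circ g_{\alpha_0}\circ h_\omega=\tau_m^{\,e(\omega)}\circ f_{\alpha_0}$ a.s. To see $e\equiv 0$, evaluate at $a_\mathbf{f}(\omega)$: since $A_\mathbf{f}$ is a random fixed point of $\varphi_\mathbf{F}$ we may write $f_{\alpha_0}(a_\mathbf{f}(\omega))=a_\mathbf{f}(\theta\omega)+[\tfrac{c(\omega)}{m}]$ with $c(\omega)\in\{0,\dots,m-1\}$, and then the two hypotheses on $(\tilde h_\omega)$ together with the previous paragraph yield
\[ g_{\alpha_0}\bigl(h_\omega(a_\mathbf{f}(\omega))\bigr)=g_{\alpha_0}\bigl(\tilde h_\omega(a_\mathbf{f}(\omega))\bigr)=\tilde h_{\theta\omega}\bigl(f_{\alpha_0}(a_\mathbf{f}(\omega))\bigr)=a_\mathbf{g}(\theta\omega)+[\tfrac{n_{\theta\omega}+c(\omega)}{m}], \]
to which applying $h_{\theta\omega}^{-1}$ (which sends $a_\mathbf{g}(\theta\omega)+[\tfrac{n_{\theta\omega}+i}{m}]$ to $a_\mathbf{f}(\theta\omega)+[\tfrac{i}{m}]$) returns $a_\mathbf{f}(\theta\omega)+[\tfrac{c(\omega)}{m}]=f_{\alpha_0}(a_\mathbf{f}(\omega))$. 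Comparing with $\tau_m^{\,e(\omega)}(f_{\alpha_0}(a_\mathbf{f}(\omega)))=f_{\alpha_0}(a_\mathbf{f}(\omega))+[\tfrac{e(\omega)}{m}]$ forces $e(\omega)=0$, and hence $(h_\omega)$ is the required orientation‑preserving conjugacy commuting with $\tau_m$.

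I expect the delicate part to be the first step rather than the short computation at the end: one must confirm that $\mathbf{F}$ and $\mathbf{G}$ really do fall under the hypotheses of Theorem~\ref{genconj} (in particular that contractivity precludes symmetries) and, crucially, that the conjugacies it produces carry attractors to attractors — it is precisely this that allows the lift $\tilde h^{(0)}_\omega$ to be reconciled with the combinatorial seed data $(\tilde h_\omega)$. Everything else is bookkeeping and routine measurable selection.
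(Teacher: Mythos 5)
Your proof is correct and follows essentially the same route as the paper: obtain an orientation-preserving conjugacy $(H_\omega)$ from $\varphi_\mathbf{F}$ to $\varphi_\mathbf{G}$ via Theorem~\ref{genconj}, choose the unique lift commuting with $\tau_m$ that agrees with $\tilde h_\omega$ at $a_\mathbf{f}(\omega)$, and evaluate at $a_\mathbf{f}(\omega)$ to kill the residual $\tau_m$-power. The paper's only noticeable shortcut is that it invokes Corollary~\ref{pres attr} to get $H_\omega(A_\mathbf{f}(\omega))=A_\mathbf{g}(\omega)$ rather than inspecting the construction in Theorem~\ref{genconj}; your preliminary checks that $\mathbf{F},\mathbf{G}$ are bona fide random circle homeomorphisms without symmetry are sound (and left implicit in the paper).
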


\begin{proof}
By Theorem~\ref{genconj}, there is an orientation-preserving conjugacy $(H_\omega)$ from $\varphi_\mathbf{F}$ to $\varphi_\mathbf{G}$. For $\mathbb{P}$-almost all $\omega$, we have (by Corollary~\ref{pres attr}) that
\[ H_\omega(A_\mathbf{f}(\omega)) \ = \ A_\mathbf{g}(\omega) \ = \ m\tilde{h}_\omega(a_\mathbf{f}(\omega)); \]
so let $h_\omega$ be the unique orientation-preserving homeomorphism commuting with $\tau_m$ such that $z_m(h_\omega)=H_\omega$ and $h_\omega(a_\mathbf{f}(\omega))=\tilde{h}_\omega(a_\mathbf{f}(\omega))$. We need to show that $(h_\omega)$ is a conjugacy from $\varphi_\mathbf{f}$ to $\varphi_\mathbf{g}$. Note that
\begin{equation} \label{homega}
h_\omega(a_\mathbf{f}(\omega)+[\tfrac{i}{m}]) \ = \ \tilde{h}_\omega(a_\mathbf{f}(\omega)+[\tfrac{i}{m}]).
\end{equation}
for each $i$. We have
\[ z_m(h_{\theta\omega} \circ f_{\alpha_0}) \ = \ H_{\theta\omega} \circ z_m(f_{\alpha_0}) \ = \ z_m(g_{\alpha_0}) \circ H_\omega \ = \ z_m(g_{\alpha_0} \circ h_\omega), \]
\noindent and so there exists $i_\omega \in \{0,\ldots,m-1\}$ such that
\[ h_{\theta\omega} \circ f_{\alpha_0} \ = \ \tau_m^{i_\omega} \circ g_{\alpha_0} \circ h_\omega. \]
\noindent Now $f_{\alpha_0}(a_\mathbf{f}(\omega)) - a_\mathbf{f}(\theta\omega)$ is a multiple of $[\frac{1}{m}]$; so since $h_{\theta\omega}(a_\mathbf{f}(\theta\omega))=\tilde{h}_{\theta\omega}(a_\mathbf{f}(\theta\omega))$, we have that
\begin{align*}
h_{\theta\omega} \circ f_{\alpha_0}(a_\mathbf{f}(\omega)) \ &= \ \tilde{h}_{\theta\omega} \circ f_{\alpha_0}(a_\mathbf{f}(\omega))  \hspace{4mm} \textrm{(by (\ref{homega}) applied to $\theta\omega$)} \\
&= \ g_{\alpha_0} \circ \tilde{h}_\omega(a_\mathbf{f}(\omega)) \\
&= \ g_{\alpha_0} \circ h_\omega(a_\mathbf{f}(\omega)).
\end{align*}
\noindent So $i_\omega=0$. So we are done.
\end{proof}

\noindent Now given a random circle homeomorphism $\mathbf{f}$ as in Lemma~\ref{sym of conj} and an orientation-reversing homeomorphism $j$, if $\mathbf{g}:=(j \circ f_\alpha \circ j^{-1})$ also has $\tau_m$ as a symmetry, then for $\mathbb{P}$-almost all $\omega$ there exists $n_\omega$ such that
\[ j(a_\mathbf{f}(\omega) + [\tfrac{i}{m}]) \ = \ a_\mathbf{g}(\omega) + [\tfrac{n_\omega-i}{m}] \]
\noindent for all $i$. But if $m=2$ then $[\frac{-i}{m}]=[\frac{i}{m}]$, and so Lemma~\ref{revconj} is obtained simply by applying Lemma~\ref{sym of conj} with $\tilde{h}_\omega=j$ for all $\omega$.

\subsection{Further preparation for the proof of Lemma~\ref{hardconj}(B)} \label{Further}

Fix any $m \in \mathbb{N}$. Let $\mathcal{J} = \{h \in \mathrm{Homeo}^+(\mathbb{S}^1) : \textrm{$h$ commutes with $\tau_m$} \}$. For any $x \in \mathbb{S}^1$, let $\breve{x}$ be the unique lift of $x$ in $[0,1)$, let $\zeta(x) \in \{0,\ldots,m-1\}$ be such that $\breve{x} \in [\frac{\zeta(x)}{m},\frac{\zeta(x)+1}{m})$, and let $d(x)$ be the unique value in $[[0],[\frac{1}{m}][$ such that $md(x)=x$. For any $h \in \mathcal{J}$ and $x \in \mathbb{S}^1$, let $L(h,x):=\zeta(h(d(x)))$.
\\ \\
For any $\mathbf{x}_1=(x_1,y_1)$ and $\mathbf{x}_2=(x_2,y_2)$ in $(\mathbb{S}^1)^2$ with $x_1 \neq x_2$ and $y_1 \neq y_2$, define
\[ \mathcal{H}_{\{\mathbf{x}_1,\mathbf{x}_2\}} \ := \ \{ H \in \mathrm{Homeo}^+(\mathbb{S}^1) \, : \, H(x_1) = y_1, \, H(x_2) = y_2 \} \]
\noindent and let
\[ v(\mathbf{x}_1,\mathbf{x}_2) \ = \ \left\{ \begin{array}{c l}
0 & \breve{x}_1 < \breve{x}_2 \Leftrightarrow \breve{y}_1 < \breve{y}_2 \\
1 & \breve{x}_1 < \breve{x}_2 \textrm{ and } \breve{y}_1 > \breve{y}_2 \\
-1 & \breve{x}_1 > \breve{x}_2 \textrm{ and } \breve{y}_1 < \breve{y}_2.
\end{array} \right. \]

\noindent Note that $\mathcal{H}_{\{\mathbf{x}_1,\mathbf{x}_2\}}$ is non-empty. Also note that $v(\mathbf{x}_1,\mathbf{x}_2)=-v(\mathbf{x}_2,\mathbf{x}_1)$.

\begin{lemma} \label{vect}
For any $h \in \mathcal{J}$ with $z_m(h) \in \mathcal{H}_{\{\mathbf{x}_1,\mathbf{x}_2\}}$,
\[ L(h,x_2) \ = \ L(h,x_1) + v(\mathbf{x}_1,\mathbf{x}_2) \hspace{5mm} \mathrm{mod} \ m. \]
\end{lemma}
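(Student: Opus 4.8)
Here is a proof plan.

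The plan is to reduce the whole statement to tracking a single short arc around the circle. Write $\mathbf{x}_i=(x_i,y_i)$, and recall that by definition $m\,d(x)=x$ with $\breve{d(x)}\in[0,\tfrac{1}{m})$, so that $\breve{x}=m\,\breve{d(x)}$ for every $x$. Since $x_1\neq x_2$ we have $\breve{x}_1\neq\breve{x}_2$; and since swapping the two indices leaves the asserted identity unchanged (using $v(\mathbf{x}_1,\mathbf{x}_2)=-v(\mathbf{x}_2,\mathbf{x}_1)$, the symmetry of $\mathcal{H}_{\{\mathbf{x}_1,\mathbf{x}_2\}}$ in its index set, and that a congruence mod $m$ may be rearranged), I may assume $\breve{x}_1<\breve{x}_2$, equivalently $\breve{d(x_1)}<\breve{d(x_2)}$. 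The case $m=1$ is vacuous ($\zeta\equiv 0$), so I assume $m\ge 2$.

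First I would record the effect of the hypothesis $z_m(h)\in\mathcal{H}_{\{\mathbf{x}_1,\mathbf{x}_2\}}$ on $h$ itself. From $z_m(h)(x_i)=y_i$ and $z_m(h)(m\,d(x_i))=m\,h(d(x_i))$ we get $m\,h(d(x_i))=y_i$, so $h(d(x_i))$ is one of the $m$ preimages of $y_i$ under $x\mapsto mx$; as these preimages are precisely $d(y_i)+[\tfrac{j}{m}]$ for $0\le j\le m-1$, with $\zeta\!\left(d(y_i)+[\tfrac{j}{m}]\right)=j$, the definition of $L$ gives $h(d(x_i))=d(y_i)+\left[\tfrac{L(h,x_i)}{m}\right]$.

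The key step is to compute the arc length $d_+\!\left(h(d(x_1)),h(d(x_2))\right)$. The anticlockwise arc $[d(x_1),d(x_2)]$ lies inside $\left[[0],[\tfrac{1}{m}]\right]$ and is a proper, non-degenerate sub-arc of it (since $d(x_2)\neq[\tfrac{1}{m}]$ and $x_1\neq x_2$). Because $h$ commutes with $\tau_m$, $h([\tfrac{1}{m}])=\tau_m(h([0]))=h([0])+[\tfrac{1}{m}]$, so $h$ carries the length-$\tfrac{1}{m}$ arc $\left[[0],[\tfrac{1}{m}]\right]$ orientation-preservingly onto the length-$\tfrac{1}{m}$ arc $\left[h([0]),h([0])+[\tfrac{1}{m}]\right]$; hence $h\!\left([d(x_1),d(x_2)]\right)=\left[h(d(x_1)),h(d(x_2))\right]$ and $d_+\!\left(h(d(x_1)),h(d(x_2))\right)\in(0,\tfrac{1}{m})$. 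Writing $a_i:=L(h,x_i)$ and $q_i:=\breve{d(y_i)}\in[0,\tfrac{1}{m})$, this arc length equals the representative in $[0,1)$ of $(q_2-q_1)+\tfrac{a_2-a_1}{m}$; and by the reduction, $q_1<q_2$ exactly when $v(\mathbf{x}_1,\mathbf{x}_2)=0$, and $q_1>q_2$ exactly when $v(\mathbf{x}_1,\mathbf{x}_2)=1$. Taking $j\in\{0,\dots,m-1\}$ with $a_2-a_1\equiv j\pmod m$, a short case check using $m\ge2$ (for $q_2>q_1$ the quantity $(q_2-q_1)+\tfrac{j}{m}$ already lies in $[\tfrac{j}{m},\tfrac{j+1}{m})\subset[0,1)$; for $q_2<q_1$ its representative in $[0,1)$ lies in $\left(\tfrac{j-1}{m},\tfrac{j}{m}\right)$ when $j\ge1$ and in $\left(1-\tfrac{1}{m},1\right)$ when $j=0$) shows that this representative can lie in $(0,\tfrac{1}{m})$ only for $j=0$ in the first case and only for $j=1$ in the second; that is, $j=v(\mathbf{x}_1,\mathbf{x}_2)$, which is the claim.

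The step I expect to require the most care is the exact-length assertion in the third paragraph, namely $d_+(h([0]),h([\tfrac{1}{m}]))=\tfrac{1}{m}$ and hence $d_+\!\left(h(d(x_1)),h(d(x_2))\right)<\tfrac{1}{m}$: this is where both $h\in\mathrm{Homeo}^+(\mathbb{S}^1)$ and $h\circ\tau_m=\tau_m\circ h$ are genuinely used, and it is precisely what prevents $h(d(x_i))$ from landing on a ``wrong'' preimage of $y_i$ and thereby shifting $L(h,x_i)$ by an unwanted multiple of $1$.
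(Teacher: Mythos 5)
Your proof is correct and takes essentially the same approach as the paper: you work with arcs directly on the circle, using the fact that an orientation-preserving $h$ commuting with $\tau_m$ carries the fundamental arc $[[0],[\tfrac{1}{m}]]$ onto an arc of length exactly $\tfrac{1}{m}$, whereas the paper phrases the same constraint via a lift $\mathfrak{h}\colon\mathbb{R}\to\mathbb{R}$ of $h$ and the inequality $\mathfrak{h}(\breve{x}_1/m) < \mathfrak{h}(\breve{x}_2/m) < \mathfrak{h}(\breve{x}_1/m) + \tfrac{1}{m}$. The resulting case analysis over the residue of $L(h,x_2)-L(h,x_1)$ is identical in substance, with your $d_+\!\left(h(d(x_1)),h(d(x_2))\right)\in(0,\tfrac{1}{m})$ playing exactly the role of the paper's inequality chain.
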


\begin{proof}
Since $v$ is antisymmetric, we can assume without loss of generality that $\breve{x}_1<\breve{x}_2$. Fix any $h \in \mathcal{J}$ with $z_m(h) \in \mathcal{H}_{\{\mathbf{x}_1,\mathbf{x}_2\}}$, and let $\mathfrak{h} \colon \mathbb{R} \to \mathbb{R}$ be a lift of $h$. Note that
\begin{align*}
\mathfrak{h}\left(\frac{\breve{x}_1}{m}\right) \ &= \ \frac{\breve{y}_1 + L(h,x_1)}{m} + c_1 \\
\mathfrak{h}\left(\frac{\breve{x}_2}{m}\right) \ &= \ \frac{\breve{y}_2 + L(h,x_2)}{m} + c_2
\end{align*}
\noindent for some integers $c_1,c_2 \in \mathbb{Z}$. So, since $h \in \mathcal{J}$ and $\frac{\breve{x}_2}{m} \in (\frac{\breve{x}_1}{m},\frac{\breve{x}_1+1}{m})$, it follows that
\[ \frac{\breve{y}_1 + L(h,x_1)}{m} + c_1 \ < \ \frac{\breve{y}_2 + L(h,x_2)}{m} + c_2 \ < \ \frac{\breve{y}_1 + L(h,x_1)}{m} + c_1 + \frac{1}{m} \]
\noindent and therefore
\[ \breve{y}_1 \ < \ \breve{y}_2 + \underbrace{L(h,x_2) - L(h,x_1) + m(c_2-c_1)}_{r} \ < \ \breve{y}_1 + 1. \]
\noindent If $\breve{y}_1<\breve{y}_2$ (i.e.~if $v(\mathbf{x}_1,\mathbf{x}_2)=0$) then we must have that $\breve{y}_2 \in (\breve{y}_1,\breve{y}_1+1)$, and so $r=0$. Likewise, if $\breve{y}_1>\breve{y}_2$ (i.e.~if $v(\mathbf{x}_1,\mathbf{x}_2)=1$) then we must have that $\breve{y}_2 \in (\breve{y}_1-1,\breve{y}_1)$, and so $r=1$.
\end{proof}

\noindent Let $\mathbf{S}_3$ be the set of 3-element subsets $\{(x_1,y_1),(x_2,y_2),(x_3,y_3)\}$ of $(\mathbb{S}^1)^2$ such that $x_1 \neq x_2 \neq x_3 \neq x_1$ and $y_1 \neq y_2 \neq y_3 \neq y_1$. Note that for any $\mathbf{x}_1=(x_1,y_1)$, $\mathbf{x}_2=(x_2,y_2)$ and $\mathbf{x}_3=(x_3,y_3)$ with $\{\mathbf{x}_1,\mathbf{x}_2,\mathbf{x}_3\} \in \mathbf{S}_3$, the following two statements are equivalent:
\begin{enumerate}[\indent (i)]
\item there exists $H \in \mathrm{Homeo}^+(\mathbb{S}^1)$ such that $H(x_1)=y_1$, $H(x_2)=y_2$, and $H(x_3)=y_3$;
\item $x_2 \in [x_1,x_3] \Leftrightarrow y_2 \in [y_1,y_3]$.
\end{enumerate}
Let
\[ \mathbf{V} \ := \ \{ \, \{(x_1,y_1),(x_2,y_2),(x_3,y_3)\} \in \mathbf{S}_3 \, : \, x_2 \in [x_1,x_3] \Leftrightarrow y_2 \in [y_1,y_3] \, \}. \]
For any $\mathbf{x}_1,\mathbf{x}_2,\mathbf{x}_3 \in (\mathbb{S}^1)^2$ with $\{\mathbf{x}_1,\mathbf{x}_2,\mathbf{x}_3\} \in \mathbf{S}_3$, define
\[ \mathbf{v}(\mathbf{x}_1,\mathbf{x}_2,\mathbf{x}_3) \ := \ v(\mathbf{x}_1,\mathbf{x}_2) + v(\mathbf{x}_2,\mathbf{x}_3) - v(\mathbf{x}_1,\mathbf{x}_3). \]

\begin{lemma} \label{triangle}
$\mathbf{v}(\mathbf{x}_1,\mathbf{x}_2,\mathbf{x}_3)$ is equal to either $-1$, $0$ or $1$. Moreover, $\mathbf{v}(\mathbf{x}_1,\mathbf{x}_2,\mathbf{x}_3)=0$ if and only if $\{\mathbf{x}_1,\mathbf{x}_2,\mathbf{x}_3\} \in \mathbf{V}$.
\end{lemma}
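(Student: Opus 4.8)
The plan is to reduce the statement entirely to elementary properties of the sign function. The first step is to record a closed form for the quantity $v$ from Section~\ref{Further}: for $\mathbf{x}=(x,y)$ and $\mathbf{x}'=(x',y')$ with $x \neq x'$ and $y \neq y'$,
\[ v(\mathbf{x},\mathbf{x}') \ = \ \tfrac{1}{2}\left( \mathrm{sgn}(\breve{x}'-\breve{x}) \, - \, \mathrm{sgn}(\breve{y}'-\breve{y}) \right). \]
This is verified immediately by checking against the three cases in the definition of $v$: the four possible sign patterns of the pair $(\mathrm{sgn}(\breve{x}'-\breve{x}),\mathrm{sgn}(\breve{y}'-\breve{y}))$ each reproduce the correct value $0$, $1$ or $-1$.

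Substituting this formula into the definition of $\mathbf{v}$, the first-coordinate and second-coordinate contributions separate cleanly:
\[ \mathbf{v}(\mathbf{x}_1,\mathbf{x}_2,\mathbf{x}_3) \ = \ \tfrac{1}{2}\bigl( \sigma(\breve{x}_1,\breve{x}_2,\breve{x}_3) \, - \, \sigma(\breve{y}_1,\breve{y}_2,\breve{y}_3) \bigr), \qquad \sigma(a,b,c) := \mathrm{sgn}(b-a)+\mathrm{sgn}(c-b)-\mathrm{sgn}(c-a). \]
The heart of the argument is then the elementary observation that, for any three pairwise distinct reals $a,b,c$, one has $\sigma(a,b,c)\in\{-1,+1\}$. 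I would establish this by running through the six strict orderings of $\{a,b,c\}$: the three cyclic rotations of $a<b<c$ all give $\sigma=1$, while the three cyclic rotations of $a<c<b$ all give $\sigma=-1$. Reading this off for $a=\breve{x}_1$, $b=\breve{x}_2$, $c=\breve{x}_3\in[0,1)$ and using the half-open lift convention, the first three orderings are precisely those for which $x_2\in[x_1,x_3]$, and the remaining three precisely those for which $x_2\notin[x_1,x_3]$; hence $\sigma(\breve{x}_1,\breve{x}_2,\breve{x}_3)=+1 \Leftrightarrow x_2\in[x_1,x_3]$, and likewise with the $\breve{y}_i$ and $y_i$.

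Combining the two steps, $\mathbf{v}(\mathbf{x}_1,\mathbf{x}_2,\mathbf{x}_3)=\tfrac12(\varepsilon_x-\varepsilon_y)$ with $\varepsilon_x,\varepsilon_y\in\{-1,+1\}$, so $\mathbf{v}(\mathbf{x}_1,\mathbf{x}_2,\mathbf{x}_3)\in\{-1,0,1\}$; and $\mathbf{v}(\mathbf{x}_1,\mathbf{x}_2,\mathbf{x}_3)=0$ exactly when $\varepsilon_x=\varepsilon_y$, which by the previous step is exactly the condition $x_2\in[x_1,x_3]\Leftrightarrow y_2\in[y_1,y_3]$, i.e.\ $\{\mathbf{x}_1,\mathbf{x}_2,\mathbf{x}_3\}\in\mathbf{V}$. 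There is no substantive obstacle here; the only point requiring a little care is bookkeeping the wraparound when translating between the linear order of the half-open lifts $\breve{x}_i$ and membership in the circular arc $[x_1,x_3]$, but once $v$ is written in sign-function form this is entirely routine.
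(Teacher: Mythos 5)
Your proof is correct. The sign-function closed form for $v$ is right (each of the four sign patterns reproduces the correct value), the telescoping into $\mathbf{v}=\tfrac12(\sigma_x-\sigma_y)$ follows immediately, and the six-ordering check that $\sigma(a,b,c)\in\{-1,1\}$ with $\sigma=+1$ on the three cyclic rotations of $a<b<c$ is accurate. The only delicate point---matching the orderings of the lifts $\breve{x}_i\in[0,1)$ with arc-membership $x_2\in[x_1,x_3]$---does check out: for the three orderings where $\sigma=+1$, $x_2$ lies in the anticlockwise arc from $x_1$ to $x_3$, and for the three where $\sigma=-1$, it does not.

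Your route differs from the paper's in a way worth noting. The paper first proves that transposing any two of $\mathbf{x}_1,\mathbf{x}_2,\mathbf{x}_3$ negates $\mathbf{v}$, uses this to assume WLOG $\breve{x}_1<\breve{x}_2<\breve{x}_3$, and then does a direct two-case analysis on $\breve{y}_1$ vs.\ $\breve{y}_3$, reading off $\mathbf{v}$ from the position of $\breve{y}_2$. Your approach instead extracts the identity $v(\mathbf{x},\mathbf{x}')=\tfrac12(\mathrm{sgn}(\breve{x}'-\breve{x})-\mathrm{sgn}(\breve{y}'-\breve{y}))$, which makes the first- and second-coordinate contributions separate \emph{exactly}, reducing the lemma to a single statement about the three-variable real function $\sigma$. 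What you gain is that the antisymmetry and the bounded range of $\mathbf{v}$ both become transparent (they are inherited directly from $\sigma$), and you avoid the WLOG step entirely; the cost is that you must then carry out the translation between the linear order on the lifts and circular-arc membership, which the paper avoids by having already reduced to the ordered case. Both arguments are elementary case checks of comparable length; yours makes the ``crossing-number'' structure of $\mathbf{v}$ more visible.
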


\begin{proof}
If we swap $\mathbf{x}_1$ and $\mathbf{x}_2$, we negate the value of $\mathbf{v}$:
\[ v(\mathbf{x}_2,\mathbf{x}_1) + v(\mathbf{x}_1,\mathbf{x}_3) - v(\mathbf{x}_2,\mathbf{x}_3) \ = \ -v(\mathbf{x}_1,\mathbf{x}_2) - v(\mathbf{x}_2,\mathbf{x}_3) + v(\mathbf{x}_1,\mathbf{x}_3). \]
\noindent Likewise for if we swap $\mathbf{x}_3$ and $\mathbf{x}_2$. Since the permutations $(1 \ 2)$ and $(2 \ 3)$ generate the permutation group on $\{1,2,3\}$, it follows that every permutation of $\mathbf{x}_1$, $\mathbf{x}_2$ and $\mathbf{x}_3$ either preserves or negates the value of $\mathbf{v}$. Hence, we may assume without loss of generality that $\breve{x}_1<\breve{x}_2<\breve{x}_3$.
\\ \\
If $\breve{y}_1<\breve{y}_3$, then $v(\mathbf{x}_1,\mathbf{x}_3)=0$ and so it is easy to check that
\[ \mathbf{v}(\mathbf{x}_1,\mathbf{x}_2,\mathbf{x}_3) \ = \ \left\{ \begin{array}{c l} 0 & \breve{y}_2 \in (\breve{y}_1,\breve{y}_3) \\ 1 & \textrm{otherwise.} \end{array} \right. \]
\noindent If $\breve{y}_1>\breve{y}_3$, then $v(\mathbf{x}_1,\mathbf{x}_3)=1$ and so it is easy to check that
\[ \mathbf{v}(\mathbf{x}_1,\mathbf{x}_2,\mathbf{x}_3) \ = \ \left\{ \begin{array}{c l} 1 & \breve{y}_2 \in (\breve{y}_3,\breve{y}_1) \\ 0 & \textrm{otherwise.} \end{array} \right. \]
\noindent It is easy to check from this that $\mathbf{v}(\mathbf{x}_1,\mathbf{x}_2,\mathbf{x}_3)=0$ if and only if $(\mathbf{x}_1,\mathbf{x}_2,\mathbf{x}_3) \in \mathbf{V}$.
\end{proof}

\subsection{Proof of Lemma~\ref{hardconj}(B)}

\begin{rmk} \label{assumption}
Observe that for any rotation map $R_c \colon x \mapsto x+c$, $\,\tau_m$ is a symmetry of the random circle homeomorphisms $(R_c \circ f_\alpha \circ R_{-c})$ and $(R_c \circ g_\alpha \circ R_{-c})$, and $z_m(R_c)=R_{mc}$; also note that any orientation-reversing homeomorphism has a fixed point. Hence, without loss of generality, it is possible to make the following assumption that we will use later:
\begin{enumerate}[\indent (a)]
\item if $m=3$ and there is an orientation-reversing deterministic conjugacy from $\mathbf{F}$ to $\mathbf{G}$, then there is an orientation-reversing deterministic conjugacy $K$ from $\mathbf{F}$ to $\mathbf{G}$ such that $K([0])=[0]$;
\item if $m=4$ and there is an orientation-reversing deterministic conjugacy from $\mathbf{F}$ to $\mathbf{G}$, then there is an orientation-reversing deterministic conjugacy $K$ from $\mathbf{F}$ to $\mathbf{G}$ such that $K([0]) \neq [0]$.
\end{enumerate}
\end{rmk}

\noindent Now let $\Omega^+:=\Delta^{\mathbb{N}_0}$, with $\mathbb{P}^+:=\nu^{\otimes \mathbb{N}_0}$; and let $\Omega^-:=\Delta^{-\mathbb{N}}$, with $\mathbb{P}^-:=\nu^{\otimes (-\mathbb{N})}$. An element of $\Omega^+$ will typically be denoted $\omega^+$, and an element of $\Omega^-$ will typically be denoted $\omega^-$. If we are given an element $\omega=(\alpha_i)_{i \in \mathbb{Z}}$ of $\Omega$, we will write $\omega^+:=(\alpha_i)_{i \geq 0}$ and $\omega^-:=(\alpha_i)_{i < 0}$. By convention, we will identify $\Omega$ with $\Omega^+ \times \Omega^-$ (\emph{not} with $\Omega^- \times \Omega^+$). For any function $F$ with domain $\Omega^-$, we define the function $\mathring{F}$ with domain $\Omega$ by $\mathring{F}(\omega)=F(\omega^-)$.
\\ \\
For any random circle homeomorphism $\mathbf{k}$ such that $\mathbb{S}^1$ is $\mathbf{k}$-minimal, we will write $\rho_\mathbf{k}$ for the unique $\mathbf{k}$-stationary measure.
\\ \\
Now let $A_\mathbf{f} \colon \Omega^- \to \mathbb{S}^1$ and $R_\mathbf{f} \colon \Omega^+ \to \mathbb{S}^1$ be such that the attractor of $\mathbf{F}$ is the map $\mathring{A}_\mathbf{f} \colon \omega \mapsto A_\mathbf{f}(\omega^-)$, and the repeller of $\mathbf{F}$ is the map $\omega \mapsto R_\mathbf{f}(\omega^+)$. Define $A_\mathbf{g}$ and $R_\mathbf{g}$ likewise for $\mathbf{G}$. Let $a_\mathbf{f}(\omega^-)$ (resp.\ $a_\mathbf{g}(\omega^-)$, $r_\mathbf{f}(\omega^+)$, $r_\mathbf{g}(\omega^+)$) be the unique point in $[[0],[\frac{1}{m}][$ whose $m$-th multiple is $A_\mathbf{f}(\omega^-)$ (resp.\ $A_\mathbf{g}(\omega^-)$, $R_\mathbf{f}(\omega^+)$, $R_\mathbf{g}(\omega^+)$).
\\ \\
By Corollary~\ref{pres attr} and Lemma~\ref{sym of conj}, there exists a conjugacy $(h_\omega)$ from $\varphi_\mathbf{f}$ to $\varphi_\mathbf{g}$ such that $h_\omega$ is an orientation-preserving homeomorphism commuting with $\tau_m$ for all $\omega$. Let $H_\omega:=z_m(h_\omega)$.
\\ \\
Let $\zeta(\cdot)$, $L(\cdot,\cdot)$, $v(\cdot,\cdot)$, $\mathbf{S}_3$, $\mathbf{V}$ and $\mathbf{v}(\cdot,\cdot,\cdot)$ be as in Section~\ref{Further}. Define $\tilde{T}^-,\tilde{T}^+\colon \Omega \to \{0,\ldots,m-1\}$ by
\begin{align*}
\tilde{T}^-(\omega) \ &= \ \zeta(h_\omega(a_\mathbf{f}(\omega^-))) \ = \ L(h_\omega,A_\mathbf{f}(\omega^-)) \\
\tilde{T}^+(\omega) \ &= \ \zeta(h_\omega(r_\mathbf{f}(\omega^+))) \ = \ L(h_\omega,R_\mathbf{f}(\omega^+)).
\end{align*}
\noindent So for $\mathbb{P}$-almost all $\omega$,
\begin{align*}
h_\omega(a_\mathbf{f}(\omega^-)) - a_\mathbf{g}(\omega^-) \ &= \ [\tfrac{\tilde{T}^-(\omega)}{m}] \\
h_\omega(r_\mathbf{f}(\omega^+)) - r_\mathbf{g}(\omega^+) \ &= \ [\tfrac{\tilde{T}^+(\omega)}{m}].
\end{align*}

\begin{lemma}
$\tilde{T}^-$ is measurable with respect to the $\mathbb{P}$-completion of $\mathcal{F}_-$, and $\tilde{T}^+$ is measurable with respect to the $\mathbb{P}$-completion of $\mathcal{F}_+$.
\end{lemma}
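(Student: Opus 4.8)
The plan is to encode the value of $\tilde T^-(\omega) \bmod m$ inside a suitable $\varphi$-invariant measure on a product system, and then invoke the correspondence between $\mathcal{F}_-$-measurable invariant measures and stationary measures recalled in the preliminaries (the bijection via taking $\mathbb{S}^1$-marginals, together with the pull-back formula $\mu_\omega=\lim_n\varphi(n,\theta^{-n}\omega)_\ast\rho$).

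First I would record a cocycle identity. Evaluating \eqref{conjugation} at the point $a_\mathbf{f}(\omega^-)$, using that $f_{\alpha_0}$, $g_{\alpha_0}$ and each $h_\omega$ commute with $\tau_m$, and using Corollary~\ref{pres attr} (which gives $z_m(h_\omega)(A_\mathbf{f}(\omega^-))=A_\mathbf{g}(\omega^-)$, hence $d(h_\omega(a_\mathbf{f}(\omega^-)))=a_\mathbf{g}(\omega^-)$), one obtains
\[ \tilde T^-(\theta\omega) \ \equiv \ \tilde T^-(\omega) \;+\; \zeta\big(g_{\alpha_0}(a_\mathbf{g}(\omega^-))\big)\;-\;\zeta\big(f_{\alpha_0}(a_\mathbf{f}(\omega^-))\big) \pmod m , \]
and iterating this identity backwards gives, for every $n$, a relation $\tilde T^-(\omega)\equiv\tilde T^-(\theta^{-n}\omega)+C_n(\omega)\pmod m$ with $C_n$ an $\mathcal{F}_-$-measurable function. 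This already has the shape of the conclusion, but the term $\tilde T^-(\theta^{-n}\omega)$ is not controlled, so I would pass to a measure-theoretic reformulation. Let $\varphi_{\mathbf{f}\times\mathbf{g}}$ be the cocycle on $\mathbb{S}^1\times\mathbb{S}^1$ generated by the random map $(f_\alpha\times g_\alpha)_{\alpha\in\Delta}$, and define
\[ \mu_\omega \ := \ \frac{1}{m}\sum_{i=0}^{m-1}\delta_{\big(a_\mathbf{f}(\omega^-)+[i/m],\ h_\omega(a_\mathbf{f}(\omega^-)+[i/m])\big)} . \]
Using \eqref{conjugation} and the fact that $f_{\alpha_0}(a_\mathbf{f}(\omega^-))$ is one of the $m$ points with $m$-th multiple $A_\mathbf{f}((\theta\omega)^-)$ (so a re-indexing of $i$ carries $(f_{\alpha_0}\times g_{\alpha_0})_\ast\mu_\omega$ to $\mu_{\theta\omega}$), $\mu$ is $\varphi_{\mathbf{f}\times\mathbf{g}}$-invariant; and by Corollary~\ref{pres attr}, $\tilde T^-(\omega)$ is the unique $s\in\{0,\dots,m-1\}$ for which $(a_\mathbf{f}(\omega^-),a_\mathbf{g}(\omega^-)+[s/m])$ lies in the support of $\mu_\omega$, so $\tilde T^-$ is a measurable function of the disintegration $(\mu_\omega)$. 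Hence it suffices to show $\mu$ is $\mathcal{F}_-$-measurable, i.e.\ that $\mu_\omega=\lim_{n\to\infty}\varphi_{\mathbf{f}\times\mathbf{g}}(n,\theta^{-n}\omega)_\ast\rho$ weakly, where $\rho$ is the $(\mathbb{S}^1)^2$-marginal of $\mu$.

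This last identification is the main obstacle: $\mu$ is a priori only $\mathcal{F}$-measurable, and since $\varphi_{\mathbf{f}\times\mathbf{g}}$ is far from contractive, the past pull-back of $\rho$ converges only to \emph{some} $\mathcal{F}_-$-measurable invariant measure $\mu'$ over $\rho$, which must be shown to equal $\mu$. Here I would use contractivity of $\mathbf{F}$ and $\mathbf{G}$: the push-forward $\varphi_{\mathbf{f}\times\mathbf{g}}(n,\theta^{-n}\omega)$ collapses each of the two $m$-point fibres onto the attractor fibres of $\mathbf{f}$ and $\mathbf{g}$ at time $0$, up to rotation by a multiple of $[1/m]$, and $\mathrm{supp}(\mu'_\omega)$ a.s.\ avoids the discontinuity set of the bounded function $(x,y)\mapsto\zeta(x)-\zeta(y)$ (since $a_\mathbf{f}(\omega^-),a_\mathbf{g}(\omega^-)\neq[0]$ a.s., the relevant $\mathbf{F}$- and $\mathbf{G}$-stationary measures being atomless by Remark~\ref{ergmin}); a Portmanteau-type argument for this discontinuous function, applied along the pull-backs, should force $\mu'$ to be exactly the ``cyclic-shift coupling'' recording $\tilde T^-$, i.e.\ $\mu'=\mu$, completing the proof for $\tilde T^-$. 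The statement for $\tilde T^+$ is proved identically with $\mathcal{F}_-$, the attractors and past pull-backs replaced throughout by $\mathcal{F}_+$, the repellers and future pull-backs, using Lemma~\ref{min inv} so that $\mathbf{f}^{-1}$, $\mathbf{g}^{-1}$ and their repellers play the symmetric role.
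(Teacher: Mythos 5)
Your opening step matches the paper's: the cocycle identity
\[ \tilde T^-(\theta\omega) \ \equiv \ \tilde T^-(\omega) + \zeta\big(g_{\alpha_0}(a_\mathbf{g}(\omega^-))\big)-\zeta\big(f_{\alpha_0}(a_\mathbf{f}(\omega^-))\big) \pmod m \]
with $\mathcal{F}_-$-measurable increment is exactly what the paper records. But from there you diverge, and the step you yourself flag as ``the main obstacle'' is a genuine gap, and in fact a circularity. You want to conclude $\mathcal{F}_-$-measurability of $\mu$ by showing that the past pull-back limit $\mu'_\omega=\lim_n\varphi_{\mathbf{f}\times\mathbf{g}}(n,\theta^{-n}\omega)_\ast\rho$ equals $\mu_\omega$. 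But $\mu'$ is by construction the unique $\mathcal{F}_-$-measurable $\varphi_{\mathbf{f}\times\mathbf{g}}$-invariant measure with marginal $\rho$, so $\mu'=\mu$ holds \emph{if and only if} $\mu$ is $\mathcal{F}_-$-measurable---which, since $\mu_\omega$ is a deterministic function of $(a_\mathbf{f}(\omega^-),a_\mathbf{g}(\omega^-),\tilde T^-(\omega))$, is equivalent to the very statement being proved. The sketched ``Portmanteau-type argument'' only identifies the \emph{support} of $\mu'_\omega$ as the $m\times m$ grid of attractor preimages $\{(a_\mathbf{f}(\omega^-)+[i/m],\,a_\mathbf{g}(\omega^-)+[j/m])\}$; since $\rho$ is $\tau_m\times\tau_m$-invariant, $\mu'_\omega$ is some $\mathcal{F}_-$-measurable probability vector $(p_s(\omega^-))_s$ over the $m$ diagonal couplings, and the contractivity of $\mathbf{F}$, $\mathbf{G}$ alone gives no reason why this vector should be the point mass at $\tilde T^-(\omega)$ rather than, say, a nontrivial conditional average. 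You would need an independent argument that rules this out, and none is supplied.

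The paper closes precisely this gap by a structural reformulation rather than a pull-back argument: the cocycle identity defines a skew product $\Theta_\beta$ on $\Omega\times\{0,\ldots,m-1\}$, and---because the increment $\beta(\alpha_0,\omega^-)$ depends only on $\alpha_0$ and $\omega^-$---this $\Theta_\beta$ can be re-read as a bundle random dynamical system over the one-sided shift $\theta^+$ on $(\Omega^+,\mathbb{P}^+)$ with i.i.d.\ driving, acting on the fibre $\Omega^-\times\{0,\ldots,m-1\}$ via the random map $\mathbf{B}$. Kifer's Theorem~I.2.1 then says an ergodic $\Theta_\beta$-invariant measure with $\Omega$-marginal $\mathbb{P}$ must be of the form $\mathbb{P}^+\otimes q$ with $q$ a stationary measure of $\mathbf{B}$; since the ergodic $\Theta_\beta$-invariant measures with $\Omega$-marginal $\mathbb{P}$ are exactly $(\delta_{T_i(\omega)})$, this forces $\delta_{T_i(\omega)}=q_{\omega^-}$ a.s., i.e.\ $\mathcal{F}_-$-measurability. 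That identification of $\Theta_\beta$ as a skew product over $\theta^+$ (not over $\theta$) with a \emph{finite} auxiliary fibre, and the appeal to Kifer's theorem, is the essential idea your proposal is missing; without it the ``encode in an invariant measure and pull back'' strategy does not get off the ground.
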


\begin{proof}
We will just show the first statement; the second statement is proved similarly. Define $\beta \colon \Delta \times \Omega^- \to \{0,\ldots,m-1\}$ by
\[ \beta(\alpha,\omega^-) \ = \ \zeta(g_\alpha(a_\mathbf{g}(\omega^-))) - \zeta(f_\alpha(a_\mathbf{f}(\omega^-))) \hspace{5mm} \mathrm{mod} \ m \]
and define $\Theta_\beta \colon \Omega \times \{0,\ldots,m-1\} \to \Omega \times \{0,\ldots,m-1\}$ by
\[ \Theta_\beta(\omega,x) \ = \ ( \, \theta\omega \, , \, x + \beta(\alpha_0,\omega^-) \ \mathrm{mod} \ m \, ). \]
A straightforward computation shows that for $\mathbb{P}$-almost all $\omega$,
\[ \tilde{T}^-(\omega) + \beta(\alpha_0,\omega^-) \ = \ \tilde{T}^-(\theta\omega) \hspace{5mm} \mathrm{mod} \ m. \]
\noindent So letting $T_i:=\tilde{T}^{-\!}+i \ \mathrm{mod}\ m\,$ for each $0 \leq i \leq m-1$, the $\Theta_\beta$-ergodic measures with $\Omega$-marginal $\mathbb{P}$ are precisely the measures $(\delta_{T_i(\omega)})_{\omega \in \Omega}$. The method that we now employ is based on ideas in \cite{Cra07} and \cite{HO07}: Define the random map $\mathbf{B}$ on $\Omega^- \times \{0,\ldots,m-1\}$ by
\[ B_{\alpha_0}( \omega^- , \, x \, ) \ = \ ( \, (\alpha_{i+1})_{i<0} \, , \, x + \beta(\alpha_0,\omega^-) \ \mathrm{mod} \ m \, ) \]
\noindent for all $\alpha_0 \in \Delta$, $\,\omega^-\!=\!(\alpha_i)_{i<0} \in \Omega^-$, and $x \in \{0,\ldots,m-1\}$. Note that for all $\omega \in \Omega$ and $x \in \{0,\ldots,m-1\}$,
\begin{equation} \label{betaskew}
\Theta_\beta(\omega,x) \ = \ ( \, \theta^+\omega^+ \, , \, B_{\alpha_0}(\omega^-,x) \, )
\end{equation}
\noindent where $\theta^+$ is the shift map on $\Omega^+$. The measure $\mathbb{P}^- \otimes \frac{1}{m}(\delta_0 + \ldots + \delta_{m-1})$ is $\mathbf{B}$-stationary, and so there is at least one $\mathbf{B}$-ergodic measure $q$. Due to (\ref{betaskew}), \cite[Theorem~I.2.1]{Kif86} gives that $\mathbb{P}^+ \otimes q$ is an ergodic probability measure of $\Theta_\beta$. But note that any $\mathbf{B}$-stationary measure must have $\Omega^-$-marginal $\mathbb{P}^-$; hence $\mathbb{P}^+ \otimes q$ has $\Omega$-marginal $\mathbb{P}$, and so $\mathbb{P}^+ \otimes q$ has the $\Omega$-disintegration $(\delta_{T_i(\omega)})$ for some $i$. So letting $(q_u)_{u \in \Omega^-}$ be the $\Omega^-$-disintegration of $q$, we have that for $\mathbb{P}$-almost all $\omega \in \Omega$, $\delta_{T_i(\omega)}=q_{\omega^-}$.
\end{proof}

\noindent So let $\tilde{\Omega}$ be the set of sample points $\omega$ for which the following statements all hold:
\begin{itemize}
\item $H_\omega(A_\mathbf{f}(\omega^-))=A_\mathbf{g}(\omega^-)$
\item $H_\omega(R_\mathbf{f}(\omega^+))=R_\mathbf{g}(\omega^+)$
\item $\tilde{T}^-(\omega)=T^-(\omega^-)$
\item $\tilde{T}^+(\omega)=T^+(\omega^+)$.
\end{itemize}
where $T^- \colon \Omega^- \to \{0,\ldots,m-1\}$ and $T^+ \colon \Omega^+ \to \{0,\ldots,m-1\}$ are measurable functions. For each $\omega^- \in \Omega^-$, let
\[ \tilde{\Omega}_{\omega^-} \ := \ \{ \omega^+ \in \Omega^+ : (\omega^+,\omega^-) \in \tilde{\Omega} \}. \]
Let
\[ \tilde{\Omega}^- \ := \ \{\omega^- \in \Omega^- : \tilde{\Omega}_{\omega^-} \textrm{ is a $\mathbb{P}^+$-full set} \}. \]

\noindent Define the measure $\mathfrak{r}$ on $(\mathbb{S}^1)^2$ to be the pushforward of $\mathbb{P}^+$ under the map $\omega^+ \mapsto (R_\mathbf{f}(\omega^+),R_\mathbf{g}(\omega^+))$.

\begin{lemma} \label{redund}
Fix any $\omega_1^-,\omega_2^- \in \tilde{\Omega}^-$ and let
\begin{align*}
\mathbf{x}_1 \ &= \ (A_\mathbf{f}(\omega_1^-),A_\mathbf{g}(\omega_1^-)) \\
\mathbf{x}_2 \ &= \ (A_\mathbf{f}(\omega_2^-),A_\mathbf{g}(\omega_2^-)).
\end{align*}
For any $\mathbf{y},\mathbf{y}' \in \mathrm{supp}\,\mathfrak{r}$, if $\{\mathbf{x}_1,\mathbf{x}_2,\mathbf{y}\},\{\mathbf{x}_1,\mathbf{x}_2,\mathbf{y}'\} \in \mathbf{S}_3$, then
\[ \{\mathbf{x}_1,\mathbf{x}_2,\mathbf{y}\} \in \mathbf{V} \hspace{3mm} \Longleftrightarrow \hspace{3mm} \{\mathbf{x}_1,\mathbf{x}_2,\mathbf{y}'\} \in \mathbf{V}. \]
\end{lemma}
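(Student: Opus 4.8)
The plan is to combine the relations $H_\omega(A_\mathbf{f}(\omega^-))=A_\mathbf{g}(\omega^-)$ and $H_\omega(R_\mathbf{f}(\omega^+))=R_\mathbf{g}(\omega^+)$, valid on the full set $\tilde\Omega$, with Lemma~\ref{vect}, to obtain a ``cocycle'' identity linking $T^-$, $T^+$ and the orientation functional $v$. From this the quantity $v(\mathbf{x}_2,\mathbf{p})-v(\mathbf{x}_1,\mathbf{p})$, with $\mathbf{p}=(R_\mathbf{f}(\omega^+),R_\mathbf{g}(\omega^+))$, will turn out not to depend on $\omega^+$, which by Lemma~\ref{triangle} pins down membership of $\{\mathbf{x}_1,\mathbf{x}_2,\mathbf{p}\}$ in $\mathbf{V}$; a short topological argument then transfers this from the essential range of $\omega^+\mapsto\mathbf{p}$ to all of $\mathrm{supp}\,\mathfrak{r}$. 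We may assume $\mathbf{x}_1$ and $\mathbf{x}_2$ have distinct first coordinates and distinct second coordinates, since otherwise no triple $\{\mathbf{x}_1,\mathbf{x}_2,\mathbf{z}\}$ lies in $\mathbf{S}_3$ and the statement is vacuous.

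\emph{Step 1 (a cocycle identity).} Fix $j\in\{1,2\}$ and let $\omega^+\in\tilde\Omega_{\omega_j^-}$ be such that, with $\mathbf{p}:=(R_\mathbf{f}(\omega^+),R_\mathbf{g}(\omega^+))$, one has $A_\mathbf{f}(\omega_j^-)\neq R_\mathbf{f}(\omega^+)$ and $A_\mathbf{g}(\omega_j^-)\neq R_\mathbf{g}(\omega^+)$; put $\omega=(\omega^+,\omega_j^-)\in\tilde\Omega$. Then $h_\omega\in\mathcal{J}$, $H_\omega=z_m(h_\omega)$, and $H_\omega\in\mathcal{H}_{\{\mathbf{x}_j,\mathbf{p}\}}$, so Lemma~\ref{vect} gives
\[ L(h_\omega,R_\mathbf{f}(\omega^+)) \ = \ L(h_\omega,A_\mathbf{f}(\omega_j^-)) \, + \, v(\mathbf{x}_j,\mathbf{p}) \pmod{m}. \]
Since $a_\mathbf{f}(\omega_j^-)=d(A_\mathbf{f}(\omega_j^-))$ and $r_\mathbf{f}(\omega^+)=d(R_\mathbf{f}(\omega^+))$, the definitions of $\tilde{T}^{\pm}$ together with $\omega\in\tilde\Omega$ yield $L(h_\omega,A_\mathbf{f}(\omega_j^-))=\tilde{T}^-(\omega)=T^-(\omega_j^-)$ and $L(h_\omega,R_\mathbf{f}(\omega^+))=\tilde{T}^+(\omega)=T^+(\omega^+)$, so $v(\mathbf{x}_j,\mathbf{p})\equiv T^+(\omega^+)-T^-(\omega_j^-)\pmod{m}$.

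\emph{Step 2 (an invariant of $\mathbf{p}$).} Since $\omega_1^-,\omega_2^-\in\tilde\Omega^-$, the set $S:=\tilde\Omega_{\omega_1^-}\cap\tilde\Omega_{\omega_2^-}$ is $\mathbb{P}^+$-full. Take $\omega^+\in S$ with $\{\mathbf{x}_1,\mathbf{x}_2,\mathbf{p}\}\in\mathbf{S}_3$; every inequality $x_a\neq x_b$, $y_a\neq y_b$ defining $\mathbf{S}_3$ then holds, so $\omega^+$ is admissible in Step 1 for both $j=1$ and $j=2$, and subtracting the two instances gives $v(\mathbf{x}_2,\mathbf{p})-v(\mathbf{x}_1,\mathbf{p})\equiv T^-(\omega_1^-)-T^-(\omega_2^-)\pmod{m}$, which does not depend on $\omega^+$. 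Hence
\[ \mathbf{v}(\mathbf{x}_1,\mathbf{x}_2,\mathbf{p}) \ = \ v(\mathbf{x}_1,\mathbf{x}_2)+v(\mathbf{x}_2,\mathbf{p})-v(\mathbf{x}_1,\mathbf{p}) \ \equiv \ v(\mathbf{x}_1,\mathbf{x}_2)+T^-(\omega_1^-)-T^-(\omega_2^-) \pmod{m}, \]
and the right-hand side, reduced mod $m$, is a residue $c\in\{0,\ldots,m-1\}$ independent of $\omega^+$. As $\mathbf{v}(\mathbf{x}_1,\mathbf{x}_2,\mathbf{p})\in\{-1,0,1\}$ by Lemma~\ref{triangle} and $m\geq2$ (so $\pm1\not\equiv0\pmod m$), it follows that $\mathbf{v}(\mathbf{x}_1,\mathbf{x}_2,\mathbf{p})=0$ — equivalently, by Lemma~\ref{triangle}, $\{\mathbf{x}_1,\mathbf{x}_2,\mathbf{p}\}\in\mathbf{V}$ — precisely when $c=0$, independently of the admissible $\omega^+\in S$.

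\emph{Step 3 (passing to the support).} One checks that $\mathbf{S}_3$ is open and that membership in $\mathbf{V}$ is locally constant on $\mathbf{S}_3$: for a triple in $\mathbf{S}_3$, each relation ``$x_2\in[x_1,x_3]$'' agrees with the open relation ``$x_2\in\,]x_1,x_3[$'', whose complement within $\mathbf{S}_3$ is also open. Given $\mathbf{y}\in\mathrm{supp}\,\mathfrak{r}$ with $\{\mathbf{x}_1,\mathbf{x}_2,\mathbf{y}\}\in\mathbf{S}_3$, choose a neighbourhood $U\ni\mathbf{y}$ in $(\mathbb{S}^1)^2$ so small that $\{\mathbf{x}_1,\mathbf{x}_2,\mathbf{z}\}\in\mathbf{S}_3$ for all $\mathbf{z}\in U$ and ``$\{\mathbf{x}_1,\mathbf{x}_2,\mathbf{z}\}\in\mathbf{V}$'' has constant truth value over $\mathbf{z}\in U$. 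Since $\mathbf{y}\in\mathrm{supp}\,\mathfrak{r}$ we have $\mathfrak{r}(U)>0$, and as $S$ is co-null the set of $\omega^+\in S$ with $(R_\mathbf{f}(\omega^+),R_\mathbf{g}(\omega^+))\in U$ has positive $\mathbb{P}^+$-measure, hence is non-empty; fixing such $\omega^+$ and using Step 2 and the choice of $U$ we get $\{\mathbf{x}_1,\mathbf{x}_2,\mathbf{y}\}\in\mathbf{V}\iff c=0$. Running the same argument with $\mathbf{y}'$ in place of $\mathbf{y}$ gives $\{\mathbf{x}_1,\mathbf{x}_2,\mathbf{y}'\}\in\mathbf{V}\iff c=0$, which is the claimed equivalence. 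The main obstacle is exactly this last transfer: $v$ (and with it $\mathbf{v}$) is badly discontinuous, so the argument has to be routed through the locally constant ``$\in\mathbf{V}$'' condition on the open set $\mathbf{S}_3$ and through the fact that $\mathfrak{r}$ charges every neighbourhood of each of its support points.
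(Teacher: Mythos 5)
Your proof is correct and follows essentially the same route as the paper's: both use the neighbourhood-of-$\mathbf{y}$ argument (exploiting that $\mathfrak{r}$ charges every neighbourhood of each of its support points and that $\{\mathbf{x}_1,\mathbf{x}_2,\cdot\}\in\mathbf{V}$ is locally constant on $\mathbf{S}_3$) to reduce to the $\mathfrak{r}$-essential range, then apply Lemma~\ref{vect} to convert $\mathbf{v}(\mathbf{x}_1,\mathbf{x}_2,\mathbf{p})$ into an expression in $T^-(\omega_1^-)-T^-(\omega_2^-)$ independent of $\omega^+$, and conclude via Lemma~\ref{triangle} and $m\geq 2$. The only difference is that you spell out in full the ``straightforward computation'' that the paper leaves implicit.
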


\begin{proof}
Suppose we have $\mathbf{y} \in \mathrm{supp}\,\mathfrak{r}$ such that $\{\mathbf{x}_1,\mathbf{x}_2,\mathbf{y}\} \in \mathbf{S}_3$. Let $U \subset (\mathbb{S}^1)^2$ be a neighbourhood of $\mathbf{y}$ sufficiently small that for all $\tilde{\mathbf{y}} \in U$, we have $\{\mathbf{x}_1,\mathbf{x}_2,\tilde{\mathbf{y}}\} \in \mathbf{S}_3$ and
\[ \{\mathbf{x}_1,\mathbf{x}_2,\mathbf{y}\} \in \mathbf{V} \hspace{3mm} \Longleftrightarrow \hspace{3mm} \{\mathbf{x}_1,\mathbf{x}_2,\tilde{\mathbf{y}}\} \in \mathbf{V}. \]
\noindent Since $\mathfrak{r}(U)>0$, there exists $\omega^+ \in \tilde{\Omega}_{\omega_1^-} \cap \tilde{\Omega}_{\omega_2^-}$ such that $(R_\mathbf{f}(\omega^+),R_\mathbf{g}(\omega^+)) \in U$. Using Lemma~\ref{vect}, a straightforward computation gives that
\[ \mathbf{v}(\mathbf{x}_1,\mathbf{x}_2,R_\mathbf{f}(\omega^+),R_\mathbf{g}(\omega^+)) \ = \ v(\mathbf{x}_1,\mathbf{x}_2) \, + \, T^-(\omega_1^-) \, - \, T^-(\omega_2^-) \hspace{5mm} \mathrm{mod} \ m. \]
\noindent So by Lemma~\ref{triangle}, since $m \geq 2$, we have that $\{\mathbf{x}_1,\mathbf{x}_2,\mathbf{y}\} \in \mathbf{V}$ if and only if
\begin{equation} \label{redeq}
v(\mathbf{x}_1,\mathbf{x}_2) \, + \, T^-(\omega_1^-) \, - \, T^-(\omega_2^-) \ = \ 0 \hspace{5mm} \mathrm{mod} \ m.
\end{equation}
But $\mathbf{y}$ does not feature at all in (\ref{redeq}).
\end{proof}

\begin{cor} \label{zk conj}
There exists $K \in \mathrm{Homeo}(\mathbb{S}^1)$ such that for $\mathbb{P}^-$-almost all $\omega^-$, $A_\mathbf{g}(\omega^-) = K(A_\mathbf{f}(\omega^-))$.
\end{cor}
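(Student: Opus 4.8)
The plan is to show that $\mathcal{G} := \operatorname{supp}\mathfrak{a}$, where $\mathfrak{a}$ is the law under $\mathbb{P}^-$ of the map $\omega^- \mapsto (A_\mathbf{f}(\omega^-),A_\mathbf{g}(\omega^-))$, is the graph of a homeomorphism $K \in \mathrm{Homeo}(\mathbb{S}^1)$; since $(A_\mathbf{f}(\omega^-),A_\mathbf{g}(\omega^-)) \in \mathcal{G}$ for $\mathbb{P}^-$-almost every $\omega^-$, the corollary follows. Write $\Gamma_0 := \{(A_\mathbf{f}(\omega^-),A_\mathbf{g}(\omega^-)) : \omega^- \in \tilde{\Omega}^-\}$; since $\tilde{\Omega}^-$ is a $\mathbb{P}^-$-full set, $\Gamma_0$ is dense in $\mathcal{G}$. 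The first and second marginals of $\mathfrak{a}$ are $\rho_\mathbf{F}$ and $\rho_\mathbf{G}$, and those of $\mathfrak{r}$ are $\rho_{\mathbf{F}^{-1}}$ and $\rho_{\mathbf{G}^{-1}}$ (the relevant inverse systems being minimal by Lemma~\ref{min inv}); each of these four measures has full support and is atomless (as in Remark~\ref{ergmin}, using minimality). In particular $\pi_1(\mathcal{G}) = \pi_2(\mathcal{G}) = \mathbb{S}^1$.

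The first main step is to show that $\mathcal{G}$ passes the vertical line test: $(a,b),(a,b') \in \mathcal{G}$ forces $b=b'$. Suppose not, with $b \neq b'$, and let $\delta>0$ be a lower bound for the $\rho_{\mathbf{G}^{-1}}$-measures of the two arcs of $\mathbb{S}^1\setminus\{b,b'\}$. By atomlessness of $\rho_{\mathbf{F}^{-1}}$, fix a small open arc $I \ni a$ with $\rho_{\mathbf{F}^{-1}}(\bar I) < \delta/2$. Using density of $\Gamma_0$ in $\mathcal{G}$ near $(a,b)$ and near $(a,b')$, together with atomlessness of $\rho_\mathbf{F}$, choose $\mathbf{x}_1 = (a_1,b_1) \in \Gamma_0$ and $\mathbf{x}_2 = (a_2,b_2) \in \Gamma_0$ with $a_1,a_2 \in I$, $a_1 \neq a_2$, and with $b_1,b_2$ close enough to $b,b'$ respectively that $b_1 \neq b_2$ and the two arcs $J_1,J_2$ of $\mathbb{S}^1\setminus\{b_1,b_2\}$ each have $\rho_{\mathbf{G}^{-1}}$-measure $>\delta/2$. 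Writing $I_1$ for the arc of $\mathbb{S}^1\setminus\{a_1,a_2\}$ contained in $I$ and $I_2$ for the other one, atomlessness of $\rho_{\mathbf{F}^{-1}}$ and $\rho_{\mathbf{F}^{-1}}(\bar I_1)\le\rho_{\mathbf{F}^{-1}}(\bar I)<\rho_{\mathbf{G}^{-1}}(J_i)$ give $\mathfrak{r}(I_2\times J_i) \ge \rho_{\mathbf{G}^{-1}}(J_i)-\rho_{\mathbf{F}^{-1}}(\bar I_1) > 0$ for $i=1,2$; hence $\operatorname{supp}\mathfrak{r}$ contains a point $\mathbf{y}=(c,d)\in I_2\times J_1$ and a point $\mathbf{y}''=(c'',d'')\in I_2\times J_2$. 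Both $\{\mathbf{x}_1,\mathbf{x}_2,\mathbf{y}\}$ and $\{\mathbf{x}_1,\mathbf{x}_2,\mathbf{y}''\}$ lie in $\mathbf{S}_3$; but for a point $(c,d)$ with $c\in I_2$, whether $\{\mathbf{x}_1,\mathbf{x}_2,(c,d)\}\in\mathbf{V}$ is determined solely by which of $J_1,J_2$ contains $d$, so exactly one of these two triples belongs to $\mathbf{V}$. This contradicts Lemma~\ref{redund}. Hence $\mathcal{G}$ is the graph of a function $K\colon\mathbb{S}^1\to\mathbb{S}^1$ (total since $\pi_1(\mathcal{G})=\mathbb{S}^1$), and $K$ is continuous since $\mathcal{G}$ is closed and $\mathbb{S}^1$ is compact.

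It remains to see $K$ is injective. The entire development leading to Lemma~\ref{redund} is symmetric under interchanging $\mathbf{f}$ and $\mathbf{g}$ and replacing the conjugacy $(h_\omega)$ by $(h_\omega^{-1})$ (again orientation-preserving and commuting with $\tau_m$); applying the previous paragraph to this swapped situation shows that $\mathcal{G}^{\mathrm{T}} := \{(y,x) : (x,y)\in\mathcal{G}\}$ is likewise the graph of a continuous function $K'\colon\mathbb{S}^1\to\mathbb{S}^1$. For every $a$ we have $(a,K(a))\in\mathcal{G}$, so $(K(a),a)\in\mathcal{G}^{\mathrm{T}}$, so $K'(K(a))=a$; symmetrically $K(K'(b))=b$ for all $b$. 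Thus $K$ and $K'$ are mutually inverse continuous self-maps of $\mathbb{S}^1$, so $K\in\mathrm{Homeo}(\mathbb{S}^1)$, and $A_\mathbf{g}(\omega^-)=K(A_\mathbf{f}(\omega^-))$ for $\mathbb{P}^-$-almost all $\omega^-$.

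The main obstacle is the first step: converting a hypothetical pair of points of $\mathcal{G}$ with equal first coordinate into a genuine contradiction with Lemma~\ref{redund}. The delicate point is producing the two test points $\mathbf{y},\mathbf{y}''\in\operatorname{supp}\mathfrak{r}$ with opposite $\mathbf{V}$-membership; this requires choosing $\mathbf{x}_1,\mathbf{x}_2$ so that the short arc between their first coordinates has $\rho_{\mathbf{F}^{-1}}$-measure smaller than each of the two arcs between their second coordinates, while still keeping $a_1\neq a_2$, which is exactly where atomlessness of the relevant stationary measures is used.
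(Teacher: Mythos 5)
Your proof is correct, but it takes a genuinely different route from the paper's. The paper fixes a measurable selection $K$ with $\operatorname{graph} K \subset \operatorname{supp}\mathfrak{r}$ (the repeller side), proves $K\circ A_\mathbf{f}=A_\mathbf{g}$ almost surely by a two\nobreakdash-sided Lusin/approximate-continuity argument (the set $L$ of two-sided limit points), and then establishes that $K$ is a homeomorphism by first proving a global monotonicity dichotomy, using it to force continuity from a density argument, and finally ruling out intervals of constancy to get strict monotonicity of a lift. You instead take $\mathcal{G}=\operatorname{supp}\mathfrak{a}$ (the attractor side), so that once $\mathcal{G}$ is known to be a graph, the identity $A_\mathbf{g}=K\circ A_\mathbf{f}$ a.s.\ is automatic; you get the ``graph'' property by a vertical-line-test argument that converts a hypothetical pair $(a,b),(a,b')\in\mathcal{G}$ into two triples with opposite $\mathbf{V}$-membership, contradicting Lemma~\ref{redund}; continuity then comes for free from the closed-graph principle on compact Hausdorff spaces; and injectivity comes from the $\mathbf{f}\leftrightarrow\mathbf{g}$ symmetry of the whole set-up (replacing $(h_\omega)$ by $(h_\omega^{-1})$), which shows $\mathcal{G}^{\mathrm{T}}$ is also a graph and yields a two-sided inverse. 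Both proofs hinge on Lemma~\ref{redund} as the key input and on the full-support/atomlessness of the four stationary measures $\rho_\mathbf{F},\rho_\mathbf{G},\rho_{\mathbf{F}^{-1}},\rho_{\mathbf{G}^{-1}}$. Your route avoids measurable selection, Lusin's theorem, and the paper's (I)/(II) monotonicity analysis, at the cost of the somewhat delicate arc-measure bookkeeping needed to produce the two test points $\mathbf{y},\mathbf{y}''\in\operatorname{supp}\mathfrak{r}$ in opposite arcs; the paper's route produces a $K$ that is manifestly supported in $\operatorname{supp}\mathfrak{r}$, which is a harmless bonus (not needed downstream, since the subsequent corollary upgrades any such $K$ to a deterministic conjugacy from $\mathbf{F}$ to $\mathbf{G}$ anyway).
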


\begin{proof}
We will use the notations $\nearrow$ and $\searrow$ as introduced before Lemma~\ref{pull}. Since the measure $\rho_{\mathbf{F}^{-1}}=R_{\mathbf{f}\ast}\mathbb{P}^+$ has full support, the projection of $\,\mathrm{supp}\,\mathfrak{r}\,$ under $\,(x,y) \mapsto x\,$ is the whole of $\mathbb{S}^1$. So let $K \colon \mathbb{S}^1 \to \mathbb{S}^1$ be a measurable function such that $\,\mathrm{graph}\,K \subset \mathrm{supp}\,\mathfrak{r}$. Let
\[ L := \{ x \in \mathbb{S}^1 \, : \, \exists \, x_n \!\nearrow\! x, \, y_n \!\searrow\! x \ \textrm{ s.t.\ } \lim_{n \to \infty} K(x_n) = \lim_{n \to \infty} K(y_n) = K(x) \}. \]
\noindent Since $\rho_\mathbf{F}$ is atomless, Lusin's theorem gives that $L$ is a $\rho_\mathbf{F}$-full measure set. Let $\hat{\Omega}^-:=\tilde{\Omega}^- \cap A_\mathbf{f}^{-1}(L)$. Since $L$ is a $\rho_\mathbf{F}$-full set, we have that $\hat{\Omega}^-$ is a $\mathbb{P}^-$-full set. Since $\rho_\mathbf{F}$ and $\rho_\mathbf{G}$ are atomless, given any $\omega^- \in \hat{\Omega}^-$, we have that for $(\mathbb{P}^- \otimes \mathbb{P}^-)$-almost all $(\tilde{\omega}_1^-,\tilde{\omega}_2^-) \in \hat{\Omega}^- \times \hat{\Omega}^-$,
\[ \{ \, (A_\mathbf{f}(\omega^-),A_\mathbf{g}(\omega^-)) \; , \; (A_\mathbf{f}(\tilde{\omega}_1^-),A_\mathbf{g}(\tilde{\omega}_1^-)) \; , \; (A_\mathbf{f}(\tilde{\omega}_2^-),A_\mathbf{g}(\tilde{\omega}_2^-)) \, \} \, \in \, \mathbf{S}_3. \]
\noindent For any $\omega^- \in \hat{\Omega}^-$, choosing $\tilde{\omega}_1^-,\tilde{\omega}_2^-$ as above, we have the following: By Lemma~\ref{redund}, either
\begin{align*}
K(]A_\mathbf{f}(\omega^-),A_\mathbf{f}(\tilde{\omega}_1^-)[) \ &\subset \ [A_\mathbf{g}(\omega^-),A_\mathbf{g}(\tilde{\omega}_1^-)] \ \textrm{ and} \\
K(]A_\mathbf{f}(\tilde{\omega}_1^-),A_\mathbf{f}(\omega^-)[) \ &\subset \ [A_\mathbf{g}(\tilde{\omega}_1^-),A_\mathbf{g}(\omega^-)],
\end{align*}
or
\begin{align*}
K(]A_\mathbf{f}(\omega^-),A_\mathbf{f}(\tilde{\omega}_1^-)[) \ &\subset \ [A_\mathbf{g}(\tilde{\omega}_1^-),A_\mathbf{g}(\omega^-)] \ \textrm{ and} \\
K(]A_\mathbf{f}(\tilde{\omega}_1^-),A_\mathbf{f}(\omega^-)[) \ &\subset \ [A_\mathbf{g}(\omega^-),A_\mathbf{g}(\tilde{\omega}_1^-)];
\end{align*}
\noindent so in either case, since $A_\mathbf{f}(\omega^-) \in L$, it follows that $K(A_\mathbf{f}(\omega^-)) \in \{A_\mathbf{g}(\omega^-),A_\mathbf{g}(\tilde{\omega}_1^-)\}$. And by the same reasoning, $K(A_\mathbf{f}(\omega^-)) \in \{A_\mathbf{g}(\omega^-),A_\mathbf{g}(\tilde{\omega}_2^-)\}$. Hence $K(A_\mathbf{f}(\omega^-))=A_\mathbf{g}(\omega^-)$.
\\ \\
It remains to show that $K$ is a homeomorphism. Let $\hat{L}:=A_\mathbf{f}(\hat{\Omega}^-)$. Note that since $\rho_\mathbf{F}$ and $\rho_\mathbf{G}$ have full support, $\hat{L}$ and $K(\hat{L})$ are dense in $\mathbb{S}^1$. Using Lemma~\ref{redund}, we obtain that either:
\begin{enumerate}[\indent (I)]
\item for any $x_1,x_2 \in \hat{L}$ and $x_3 \in \mathbb{S}^1$, if $\xi(x_1,x_2,x_3) \, := \, \{ \, (x_1,K(x_1)) \, , \, (x_2,K(x_2)) \, , \, (x_3,K(x_3)) \, \} \, \in \, \mathbf{S}_3$ then $\xi(x_1,x_2,x_3) \in \mathbf{V}$; or
\item for any $x_1,x_2 \in \hat{L}$ and $x_3 \in \mathbb{S}^1$, $\xi(x_1,x_2,x_3) \nin \mathbf{V}$.
\end{enumerate}
\noindent Assume that (I) holds. Fix any $x \in \mathbb{S}^1$ and any sequence $x_n \searrow x$ in $\hat{L}$ with $K(x_n)$ convergent, and suppose for a contradiction that $l:=\lim_{n\to\infty} K(x_n) \neq K(x)$; then since (I) holds, for each $n$ with $K(x_n) \neq K(x)$, we have that
\[ K(\hat{L} \, \cap \, [x_n,x]) \ \subset \ [K(x_n),K(x)], \]
\noindent and therefore,
\[ K(\hat{L}) \ \subset \ [l,K(x)], \]
\noindent contradicting the fact that $K(\hat{L})$ is dense. Hence, for any $x \in \mathbb{S}^1$ and any sequence $x_n \searrow x$ in $\hat{L}$, we have that $K(x_n) \to K(x)$. Similarly, for any $x \in \mathbb{S}^1$ and any sequence $x_n \nearrow x$ in $\hat{L}$, we have that $K(x_n) \to K(x)$. So since $\hat{L}$ is dense, it follows that $K$ is continuous everywhere. Let $K' \colon \mathbb{R} \to \mathbb{R}$ be a lift of $K$. Since (I) holds, $K'$ is monotone with $K'(t+1)=K'(t)+1$. Since $\rho_\mathbf{F}$ has full support and $\rho_\mathbf{G}$ is atomless, there cannot exist a non-empty open set on which $K$ is constant; hence $K'$ is strictly increasing. So $K$ is an orientation-preserving homeomorphism. A similar argument shows that if (II) holds, then $K$ is an orientation-reversing homeomorphism.
\end{proof}

\begin{cor}
$\mathbf{F}$ and $\mathbf{G}$ are deterministically topologically conjugate.
\end{cor}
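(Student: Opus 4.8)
\textbf{Proof plan for the Corollary ($\mathbf{F}$ and $\mathbf{G}$ are deterministically topologically conjugate).}

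The plan is to show that the homeomorphism $K \in \mathrm{Homeo}(\mathbb{S}^1)$ supplied by Corollary~\ref{zk conj} is itself a deterministic topological conjugacy from $\mathbf{F}$ to $\mathbf{G}$; that is, writing $F_\alpha := z_m(f_\alpha)$ and $G_\alpha := z_m(g_\alpha)$, that $K \circ F_\alpha = G_\alpha \circ K$ for \emph{every} $\alpha \in \Delta$. The idea is that $K$ already intertwines the attractor random fixed points $\mathring{A}_\mathbf{f}$ and $\mathring{A}_\mathbf{g}$, and that the attractor values are dense in $\mathbb{S}^1$, so the intertwining propagates to the maps $F_\alpha$ and $G_\alpha$ themselves.

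First I would record the ``one-sided cocycle identity''. For $\alpha \in \Delta$ and $\omega^- \in \Omega^-$, write $\omega^-\!\cdot\alpha$ for the element of $\Omega^-$ obtained by prepending $\alpha$ (i.e.\ making $\alpha$ the new coordinate at $-1$ and shifting the old coordinates one step down). Since $\mathring{A}_\mathbf{f}$ (resp.\ $\mathring{A}_\mathbf{g}$) is a random fixed point of $\varphi_\mathbf{F}$ (resp.\ $\varphi_\mathbf{G}$), one has $F_\alpha(A_\mathbf{f}(\omega^-)) = A_\mathbf{f}(\omega^-\!\cdot\alpha)$ and $G_\alpha(A_\mathbf{g}(\omega^-)) = A_\mathbf{g}(\omega^-\!\cdot\alpha)$ for $(\nu\otimes\mathbb{P}^-)$-almost all $(\alpha,\omega^-)$. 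Moreover, the map $(\alpha,\omega^-)\mapsto\omega^-\!\cdot\alpha$ pushes $\nu\otimes\mathbb{P}^-$ forward to $\mathbb{P}^-$, so Corollary~\ref{zk conj} gives not only $K(A_\mathbf{f}(\omega^-))=A_\mathbf{g}(\omega^-)$ but also $K(A_\mathbf{f}(\omega^-\!\cdot\alpha))=A_\mathbf{g}(\omega^-\!\cdot\alpha)$, for $(\nu\otimes\mathbb{P}^-)$-almost all $(\alpha,\omega^-)$. Chaining these three identities yields
\[ K\bigl(F_\alpha(A_\mathbf{f}(\omega^-))\bigr) \ = \ G_\alpha\bigl(K(A_\mathbf{f}(\omega^-))\bigr) \]
for $(\nu\otimes\mathbb{P}^-)$-almost all $(\alpha,\omega^-)$. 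By Fubini, for $\nu$-almost every $\alpha$ the continuous maps $K\circ F_\alpha$ and $G_\alpha\circ K$ agree on the set $\{A_\mathbf{f}(\omega^-):\omega^-\in\Omega^-_\alpha\}$ for some $\mathbb{P}^-$-full set $\Omega^-_\alpha$; since $\mathbb{S}^1$ is $\mathbf{F}$-minimal, the distribution $\rho_\mathbf{F}$ of $A_\mathbf{f}$ has full support (Remark~\ref{ergmin}), so this set meets every non-empty open subset of $\mathbb{S}^1$ and is therefore dense. Hence $K\circ F_\alpha = G_\alpha\circ K$ identically, for $\nu$-almost every $\alpha$. Finally, since $(\alpha,x)\mapsto F_\alpha(x)$ and $(\alpha,x)\mapsto G_\alpha(x)$ are continuous, $K$ is continuous, and $\nu$ has full support, the identity $K\circ F_\alpha = G_\alpha\circ K$ extends to \emph{every} $\alpha\in\Delta$. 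Thus $K$ is a deterministic topological conjugacy from $\mathbf{F}$ to $\mathbf{G}$ (orientation-reversing precisely in case~(II) of the proof of Corollary~\ref{zk conj}).

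There is essentially no obstacle once Corollary~\ref{zk conj} is in hand: the only care required is the bookkeeping of the one-sided shift (the ``prepend'' operation and the fact that it pushes $\nu\otimes\mathbb{P}^-$ to $\mathbb{P}^-$), the nested almost-sure statements, and the observation that the attractor of a minimal contractive random circle homeomorphism has dense range. One could equally avoid Fubini by phrasing the whole argument in terms of pushing forward $\mathbb{P}^-$ under the maps involved.
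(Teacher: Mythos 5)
Your proposal is correct and takes essentially the same approach as the paper: intertwine the attractor random fixed points via $K$ (from Corollary~\ref{zk conj}), use the random-fixed-point identity to get $K\circ F_{\alpha_0}\circ\mathring{A}_\mathbf{f}=G_{\alpha_0}\circ K\circ\mathring{A}_\mathbf{f}$ almost surely, and then propagate via the full support of $\rho_\mathbf{F}$ on $\mathbb{S}^1$ and of $\nu$ on $\Delta$. The paper phrases this with the two-sided shift $\theta$ and leaves the Fubini/density bookkeeping implicit, whereas you spell it out with the ``prepend'' map on $\Omega^-$; the content is identical.
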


\begin{proof}
Let $K$ be as in Corollary~\ref{zk conj}. We have that
\[ K \circ F_{\alpha_0}(\mathring{A}_\mathbf{f}(\omega)) \ = \ K(\mathring{A}_\mathbf{f}(\theta\omega)) \ = \ \mathring{A}_\mathbf{g}(\theta\omega) \ = \ G_{\alpha_0}(\mathring{A}_\mathbf{g}(\omega)) \ = \ G_{\alpha_0} \circ K(\mathring{A}_\mathbf{f}(\omega)) \]
for $\mathbb{P}$-almost all $\omega$; and so, since $\nu$ has full support on $\Delta$ and $\rho_\mathbf{F}$ has full support on $\mathbb{S}^1$, it follows that $K \circ F_\alpha=G_\alpha \circ K$ for all $\alpha$.
\end{proof}

\noindent So let $K \in \mathrm{Homeo}(\mathbb{S}^1)$ be a deterministic conjugacy from $\mathbf{F}$ to $\mathbf{G}$ fulfilling the description in Remark~\ref{assumption}. To complete the proof of Lemma~\ref{hardconj}(B), we split into two cases, the latter of which leads to a contradiction.

\subsubsection*{Case I: Either (a)~$K$ is orientation-preserving, or (b)~$m=2$ and $K$ is orientation-reversing.}

Let $\kappa$ be a homeomorphism commuting with $\tau_m$ such that $z_m(\kappa)=K$. For every $\alpha$ there exists $i \in \{0,\ldots,m-1\}$ such that
\[ f_\alpha \ = \ \tau_m^i \circ \kappa^{-1} \circ g_\alpha \circ \kappa. \]
But since $\Delta$ is connected, $i$ does not depend on $\alpha$. We need to show that $i = 0$. For $\mathbb{P}$-almost all $\omega$, let $n(\omega) \in \{0,\ldots,m-1\}$ be such that
\[ \kappa(a_\mathbf{f}(\omega)) \ = \ h_\omega(a_\mathbf{f}(\omega)) + [\tfrac{n(\omega)}{m}]. \]
A straightforward computation gives that $n(\theta\omega) \, = \, n(\omega) + i \ \mathrm{mod} \ m$; so since $\mathbb{P}$ is $\theta^m$-ergodic, we have that $n(\cdot)$ is constant $\mathbb{P}$-almost everywhere, and so $i=0$.

\subsubsection*{Case II: $m \geq 3$ and $K$ is orientation-reversing.}

For this case, we need to obtain a contradiction. Let $S \subset \{0,\ldots,m-1\}$ be the support of $T^-_{\ \ast}\mathbb{P}^-$.

\begin{lemma} \label{contradict}
There is an integer $p$ such that $S \subset \{p,p+1,p+2\}$ modulo $m$. In the case that $m=3$, $S$ has at most two elements.
\end{lemma}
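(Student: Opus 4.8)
The plan is to turn the cocycle relation into a single pointwise formula for $T^-$, by using Lemma~\ref{vect} with the attractor point of $\mathbf{F}$ as one reference point and the repeller point of $\mathbf{F}$ as the other, and then simply read off the two claims.

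First I would use Fubini's theorem to fix an $\omega^+\in\Omega^+$ that is \emph{good}, in the sense that: the slice $\{\omega^-\in\Omega^-:(\omega^+,\omega^-)\in\tilde\Omega\}$ is $\mathbb{P}^-$-conull; $R_\mathbf{f}(\omega^+)\neq[0]$; and for $\mathbb{P}^-$-almost every $\omega^-$ one has $A_\mathbf{f}(\omega^-)\neq R_\mathbf{f}(\omega^+)$, $A_\mathbf{g}(\omega^-)\neq R_\mathbf{g}(\omega^+)$ and $A_\mathbf{f}(\omega^-)\neq[0]$. All of this holds for $\mathbb{P}^+$-almost every $\omega^+$: $\tilde\Omega$ is $\mathbb{P}$-conull by construction, and $A_\mathbf{f}$, $R_\mathbf{f}$ have atomless, mutually independent laws, since $\mathbb{S}^1$ is $\mathbf{F}$-minimal and, by Lemma~\ref{min inv}, also $\mathbf{F}^{-1}$-minimal (so there is no common fixed point). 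Then, for $\mathbb{P}^-$-almost every $\omega^-$, the point $\omega=(\omega^+,\omega^-)$ lies in $\tilde\Omega$, so $H_\omega=z_m(h_\omega)$ lies in $\mathcal{H}_{\{\mathbf{x},\mathbf{y}\}}$ with $\mathbf{x}=(A_\mathbf{f}(\omega^-),A_\mathbf{g}(\omega^-))$ and $\mathbf{y}=(R_\mathbf{f}(\omega^+),R_\mathbf{g}(\omega^+))$; since $h_\omega\in\mathcal{J}$ and $L(h_\omega,A_\mathbf{f}(\omega^-))=\tilde T^-(\omega)=T^-(\omega^-)$, $L(h_\omega,R_\mathbf{f}(\omega^+))=\tilde T^+(\omega)=T^+(\omega^+)$, Lemma~\ref{vect} yields
\[ T^-(\omega^-) \;=\; T^+(\omega^+)\;-\;v(\mathbf{x},\mathbf{y}) \qquad \mathrm{mod}\ m. \]
As $v(\mathbf{x},\mathbf{y})\in\{-1,0,1\}$, the residue $T^-(\omega^-)$ lies in $\{T^+(\omega^+)-1,\,T^+(\omega^+),\,T^+(\omega^+)+1\}$ modulo $m$ for $\mathbb{P}^-$-almost every $\omega^-$; hence $S\subset\{p,p+1,p+2\}$ modulo $m$ with $p:=T^+(\omega^+)-1$, which is the first assertion.

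For the case $m=3$ I would sharpen the last step to $v(\mathbf{x},\mathbf{y})\neq 0$. In this case $K$ (the deterministic conjugacy from $\mathbf{F}$ to $\mathbf{G}$ chosen as in Remark~\ref{assumption}) is orientation-reversing with $K([0])=[0]$. Being a deterministic conjugacy between the two contractive systems $\mathbf{F},\mathbf{G}$ on the ($\mathbf{F}$- and $\mathbf{G}$-minimal) circle, $K$ carries the attractor of $\mathbf{F}$ to that of $\mathbf{G}$ and the repeller of $\mathbf{F}$ to that of $\mathbf{G}$ (as in Corollary~\ref{zk conj}; the attractor and repeller are canonical, being, via Arnold's correspondence recalled in the preliminaries, the unique $\mathcal{F}_-$- resp.\ $\mathcal{F}_+$-measurable random fixed point of $\varphi_\mathbf{F}$). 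Thus $\mathbf{x}=(a,K(a))$ and $\mathbf{y}=(r,K(r))$ with $a:=A_\mathbf{f}(\omega^-)$ and $r:=R_\mathbf{f}(\omega^+)$ distinct points of $\mathbb{S}^1\setminus\{[0]\}$. An orientation-reversing homeomorphism fixing $[0]$ restricts to a strictly decreasing homeomorphism of $\mathbb{S}^1\setminus\{[0]\}$ onto itself, i.e.\ $\breve a<\breve r\iff\breve{K(a)}>\breve{K(r)}$; by the definition of $v$ this forces $v(\mathbf{x},\mathbf{y})\in\{-1,1\}$. Hence, for the good $\omega^+$ fixed above and $\mathbb{P}^-$-almost every $\omega^-$, $T^-(\omega^-)\in\{T^+(\omega^+)-1,\,T^+(\omega^+)+1\}$ modulo $3$, so $S$ has at most two elements.

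I do not expect a serious obstacle: the whole argument is a short consequence of Lemma~\ref{vect} together with the book-keeping that produced $\tilde\Omega$, $T^-$ and $T^+$. The one point requiring care is marshalling all the almost-sure conditions simultaneously for a single good $\omega^+$ — membership of $(\omega^+,\omega^-)$ in $\tilde\Omega$, the non-coincidences that make $\mathcal{H}_{\{\mathbf{x},\mathbf{y}\}}$ (hence Lemma~\ref{vect}) applicable, and, for the $m=3$ step, $a,r\neq[0]$ — all of which is a routine Fubini argument; the geometric fact used when $m=3$, that an orientation-reversing homeomorphism fixing a point reverses the induced linear order on the complement, is elementary.
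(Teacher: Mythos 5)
Your proof is correct and follows essentially the same route as the paper's: both derive the constraint on $S$ by applying Lemma~\ref{vect} to the pair $\bigl(A_\mathbf{f}(\omega^-),A_\mathbf{g}(\omega^-)\bigr)$, $\bigl(R_\mathbf{f}(\omega^+),R_\mathbf{g}(\omega^+)\bigr)$, and for $m=3$ use that $K$ is orientation-reversing with $K([0])=[0]$ to rule out $v=0$. The only difference is that you make explicit the Fubini step of fixing a good $\omega^+$ and setting $p=T^+(\omega^+)-1$, which the paper leaves implicit in ``hence the first statement is clear''.
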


\begin{proof}
By Lemma~\ref{vect}, we have that for $\mathbb{P}$-almost all $\omega$, $T^+(\omega^+)-T^-(\omega^-)$ is equal to either $-1$, $0$ or $1$ modulo~$m$; hence the first statement is clear. Now suppose that $m=3$. Then for $\mathbb{P}$-almost all $\omega$,
\begin{align*}
v( \, (A_\mathbf{f}(\omega^-),A_\mathbf{g}(\omega^-)) \, , \, (R_\mathbf{f}(\omega^+),R_\mathbf{g}(\omega^+)) \, ) \ &= \ v( \, (A_\mathbf{f}(\omega^-),K(A_\mathbf{f}(\omega^-))) \, , \, (R_\mathbf{f}(\omega^+),K(R_\mathbf{f}(\omega^+))) \, ) \\
&\in \ \{-1,1\} \hspace{5mm} \textrm{since $K([0])=[0]$.}
\end{align*}
So by Lemma~\ref{vect}, we have that for $\mathbb{P}$-almost all $\omega$, $T^+(\omega^+)-T^-(\omega^-)$ is equal to either $-1$ or $1$ modulo~$3$; hence we likewise have the second statement.
\end{proof}

\noindent Now \emph{fix} a point $\omega^-=(\alpha_i)_{i<0} \in \Omega^-$ with the property that for $\mathbb{P}^+$-almost all $\omega^+$, for all $n \in \mathbb{N}_0$, the following statements hold (where $\omega:=(\omega^+,\omega^-)$):
\begin{itemize}
\item $\varphi_\mathbf{F}(n,\omega)\mathring{A}_\mathbf{f}(\omega)=\mathring{A}_\mathbf{f}(\theta^n\omega)\ $ and $\ \varphi_\mathbf{G}(n,\omega)\mathring{A}_\mathbf{g}(\omega)=\mathring{A}_\mathbf{g}(\theta^n\omega)$;
\item $\mathring{A}_\mathbf{g}(\theta^n\omega)=K(\mathring{A}_\mathbf{f}(\theta^n\omega))=H_{\theta^n\omega}(\mathring{A}_\mathbf{f}(\theta^n\omega))$;
\item $S \, \ni \, \mathring{T}^-(\theta^n\omega) \, = \, \zeta(h_{\theta^n\omega}(\mathring{a}_\mathbf{f}(\theta^n\omega))) \ \mathrm{mod} \ m$;
\item $\varphi_\mathbf{f}(n,\omega) \, = \, h_{\theta^n\omega}^{-1} \circ \varphi_\mathbf{g}(n,\omega) \circ h_\omega$.
\end{itemize}
Let $x_\mathbf{f}:=a_\mathbf{f}(\omega^-)$ and let $x_\mathbf{g}:=a_\mathbf{g}(\omega^-)+[\frac{T^-(\omega^-)}{m}]$; so for $\mathbb{P}^+$-almost all $\omega^+ \in \Omega^+$, $h_\omega(x_\mathbf{f})=x_\mathbf{g}$. By \cite[Corollary~2.6]{Mal14}, as $n \to \infty$ the image measure of $\nu^{\otimes n}$ under $\boldsymbol{\alpha} \mapsto f_{\boldsymbol{\alpha}}^n(x_\mathbf{f})$ converges weakly to $\rho_\mathbf{f}$. So fix $N \in \mathbb{N}$ such that $I:=\overline{\{f_{\boldsymbol{\alpha}}^N(x) : \boldsymbol{\alpha} \in \Delta^N\}}$ has Lebesgue measure more than $1-\frac{1}{m}$. For convenience, for any $\boldsymbol{\alpha}=(\alpha_0,\ldots,\alpha_{N-1}) \in \Delta^N$, define $\theta_{\boldsymbol{\alpha}}^N\omega^- :=(\alpha_{i+N})_{i<0} \in \Omega^-$. A straightforward computation gives that for $(\nu^{\otimes N})$-almost all $\boldsymbol{\alpha} \in \Delta^N$,
\begin{equation} \label{T eq}
T^-(\theta_{\boldsymbol{\alpha}}^N\omega^-) \ = \ \zeta(g_{\boldsymbol{\alpha}}^N(x_\mathbf{g})) \ - \ \zeta(f_{\boldsymbol{\alpha}}^N(x_\mathbf{f})) \hspace{5mm} \mathrm{mod} \ m.
\end{equation}
Now for $(\nu^{\otimes N})$-almost all $\boldsymbol{\alpha} \in \Delta^N$,
\[ mg_{\boldsymbol{\alpha}}^N(x_\mathbf{g}) \ = \ A_\mathbf{g}(\theta_{\boldsymbol{\alpha}}^N\omega^-) \ = \ K(A_\mathbf{f}(\theta_{\boldsymbol{\alpha}}^N\omega^-)) \ = \ K(mf_{\boldsymbol{\alpha}}^N(x_\mathbf{f})). \]
\noindent So for \emph{every} $\boldsymbol{\alpha} \in \Delta^N$,
\[ mg_{\boldsymbol{\alpha}}^N(x_\mathbf{g}) \ = \ K(mf_{\boldsymbol{\alpha}}^N(x_\mathbf{f})). \]
\noindent So then, letting $\tilde{\kappa} \in \mathrm{Homeo}^-(\mathbb{S}^1)$ be such that $-\tilde{\kappa}$ commutes with $\tau_m$ and $z_m(-\tilde{\kappa})=-K$, we have that for all $\boldsymbol{\alpha} \in \Delta^N$ there exists $i \in \{0,\ldots,m-1\}$ such that
\[ g_{\boldsymbol{\alpha}}^N(x_\mathbf{g}) \ = \ \tilde{\kappa}(f_{\boldsymbol{\alpha}}^N(x_\mathbf{f})) + [\tfrac{i}{m}]. \]
\noindent But by the connectedness of $\Delta$, $i$ does not depend on $\boldsymbol{\alpha}$. So there exists $i \in \{0,\ldots,m-1\}$ such that, setting $\kappa:=\tau_m^i \circ \tilde{\kappa}$, for all $\boldsymbol{\alpha} \in \Delta^N$ we have
\[ g_{\boldsymbol{\alpha}}^N(x_\mathbf{g}) \ = \ \kappa(f_{\boldsymbol{\alpha}}^N(x_\mathbf{f})). \]
So by (\ref{T eq}), writing $y(\boldsymbol{\alpha}):=f_{\boldsymbol{\alpha}}^N(x_\mathbf{f})$, we have that
\[ \zeta(\kappa(y(\boldsymbol{\alpha}))) \ - \ \zeta(y(\boldsymbol{\alpha})) \, \in S \hspace{5mm} \mathrm{mod} \ m \]
\noindent for $(\nu^{\otimes N})$-almost all $\boldsymbol{\alpha} \in \Delta^N$. Note that the function $x \mapsto \zeta(\kappa(x))-\zeta(x)$ is piecewise-constant on $\mathbb{S}^1$, with the jumps in the value of this function being as follows:
\begin{itemize}
\item if $K([0]) \neq [0]$ then the jumps occur at $[\frac{k}{m}]$ and at $\kappa^{-1}([\frac{k}{m}])$ for all $k \in \{0,\ldots,m-1\}$, and modulo~$m$ these jumps have height $-1$ (with $x$ going anticlockwise);
\item if $K([0]) = [0]$ then the jumps occur at $[\frac{k}{m}]$ for all $k \in \{0,\ldots,m-1\}$, and modulo~$m$ these jumps have height $-2$ (with $x$ going anticlockwise).
\end{itemize}

\noindent But the range of $y(\cdot)$ on any $\nu^{\otimes N}$-full measure subset of $\Delta^N$ is dense in $I$, and $I$ is itself a connected set of Lebesgue measure more than $1-\frac{1}{m}$. Thus, the range of $\zeta(\kappa(y(\cdot)) \ - \ \zeta(y(\cdot))$ over a $\nu^{\otimes N}$-full measure set cannot be constrained to just three values if $m \geq 4$, and cannot be constrained to just two values if $m=3$. This contradicts Lemma~\ref{contradict}.

\bigskip

\noindent
\textbf{Acknowledgements. } The authors gratefully acknowledge financial support from the following sources: Russian Science Foundation grant 14-41-00044 at the Lobachevsky University of Nizhny Novgorod (JL), EU Marie-Curie IRSES Brazilian-European partnership in Dynamical Systems (FP7-PEOPLE-2012-IRSES 318999 BREUDS) (JL), EU Horizon 2020 Innovative Training Network CRITICS, grant no. 643073 (JL and MR), DFG grant CRC 701, Spectral Structures and Topological Methods in Mathematics (JN), EPSRC Doctoral Prize Fellowship (JN), EPSRC Career Acceleration Fellowship EP/I004165/1 (MR).

\sloppy


\end{document}